\newtheorem{Theorem}{Theorem}[section]
\newtheorem{Lemma}[Theorem]{Lemma}
\newtheorem{Proposition}[Theorem]{Proposition}
\newtheorem{Corollary}[Theorem]{Corollary}
\theoremstyle{definition}
\newtheorem{Definition}[Theorem]{Definition}
\theoremstyle{remark}
\newtheorem{Example}[Theorem]{Example}
\newtheorem{Remark}[Theorem]{Remark} 
\numberwithin{equation}{section}
\newcommand{\R}{\mathbb R}
\newcommand{\Rim}{\mathcal R}
\newcommand{\C}{\mathbb C}
\newcommand{\Z}{\mathbb Z}
\newcommand{\D}{\mathbb D}
\newcommand{\Min}{\mathbb L^3}
\newcommand{\Ha}{\mathbb H}
\newcommand{\St}{\mathbb S}
\newcommand{\cc}{\mathfrak c}
\newcommand{\s}{\mathfrak s}
\newcommand{\LGL}{\Lambda {\rm GL}_2 \mathbb C_{\sigma}}
\newcommand{\ISU}{{\rm SU}_{1, 1}}
\newcommand{\LISU}{\Lambda {\rm SU}_{1, 1 \sigma}}
\newcommand{\Uone}{{\rm U}_1}
\newcommand{\id}{\operatorname{id}}
\newcommand{\isu}{\mathfrak{su}_{1, 1}}
\newcommand{\Lisu}{\Lambda \mathfrak{su}_{1, 1 \sigma}}
\newcommand{\SL}{{\rm SL}_2 \mathbb C}
\newcommand{\LSL}{\Lambda {\rm SL}_2 \mathbb C_{\sigma}}
\newcommand{\LSLPM}{\Lambda^{\pm} {\rm SL}_{2} \mathbb C_{\sigma}}
\newcommand{\LSLM}{\Lambda^{-} {\rm SL}_{2} \mathbb C_{\sigma}}
\newcommand{\LSLMI}{\Lambda_*^{-} {\rm SL}_{2} \mathbb C_{\sigma}}
\newcommand{\LSLP}{\Lambda^+ {\rm SL}_{2} \mathbb C_{\sigma}}
\newcommand{\LSLPI}{\Lambda_*^+ {\rm SL}_{2} \mathbb C_{\sigma}}
\newcommand{\ad}{\operatorname{Ad}}
\newcommand{\di}{\operatorname{diag}}
\newcommand{\tr}{\operatorname{Tr}}
\newcommand{\Nil}{{\rm Nil}_3}
\newcommand{\iso}{\operatorname{Iso} (\Nil)}
\newcommand{\isoo}{\operatorname{Iso}_{\circ} (\Nil)}
\renewcommand{\Re}{\operatorname {Re}}
\renewcommand{\Im}{\operatorname {Im}}
\newcommand{\sdz}{(dz)^{1/2}}
\newcommand{\sdzb}{(d\bar z)^{1/2}}
\newcommand{\cNil}{{\rm center(Nil_3)}}
\newcommand{\pz}{\partial}
\newcommand{\pzb}{\bar \partial}
\newcommand{\Aut}{\operatorname{Aut}}
\renewcommand{\l}{\lambda}
\def\Vec#1{\mbox{\boldmath $#1$}}
\newcommand{\Red}[1]{{\color{black} #1}} 
\begin{document}
\title{Minimal surfaces with non-trivial 
 geometry in the three-dimensional Heisenberg group}
\dedicatory{Dedicated to the memory of Uwe Abresch}
 \author[J. F.~Dorfmeister]{Josef F. Dorfmeister}
 \address{Fakult\"at f\"ur Mathematik, 
 TU-M\"unchen, 
 Boltzmann str. 3,
 D-85747, 
 Garching, 
 Germany}
 \email{dorfm@ma.tum.de}
\author[J.~Inoguchi]{Jun-ichi Inoguchi}
\address{Institute of Mathematics, 
University of Tsukuba, 
Tsukuba 305-8571, Japan}
\email{inoguchi@math.tsukuba.ac.jp}
\thanks{The second named author is partially supported by 
Kakenhi 15K04834, 19K03461}
 \author[S.-P.~Kobayashi]{Shimpei Kobayashi}
 \address{Department of Mathematics, Hokkaido University, 
 Sapporo, 060-0810, Japan}
 \email{shimpei@math.sci.hokudai.ac.jp}
 \thanks{The third named author is partially supported by Kakenhi 26400059, 18K03265 and Deutsche Forschungsgemeinschaft-Collaborative Research Center, TRR 109, ``Discretization in Geometry and Dynamics''.}
 \subjclass[2010]{Primary~53A10, 58E20, Secondary~53C42}
 \keywords{Minimal surfaces; Heisenberg group; symmetries; 
 generalized Weierstrass type representation}
 \date{\today}
\pagestyle{plain}
\begin{abstract}
 We study symmetric minimal surfaces in the three-dimensional 
 Heisenberg group $\mathrm{Nil}_3$ using the generalized Weierstrass 
 type representation, the so-called loop group method. 
 In particular, we will present a general scheme for how to construct 
 minimal surfaces in $\Nil$ with non-trivial geometry. Special emphasis
 will be put on equivariant minimal surfaces.
 Moreover, we will classify equivariant 
 minimal surfaces given by one-parameter subgroups of the isometry group
 $\mathrm{Iso}_{\circ}(\mathrm{Nil}_3)$ of $\mathrm{Nil}_3$.
\end{abstract}
\maketitle    
 In every class of surfaces those with a large group of 
 symmetries have usually particularly nice properties.
 The most well known examples are rotationally 
 invariant surfaces, namely \textit{surfaces of revolution} in 
 Euclidean $3$-space $\mathbb{R}^3$. 
 More generally, surfaces in $\mathbb{R}^3$ invariant under 
 helicoidal motion have been studied extensively. 
In particular, do Carmo and Dajczer proved that 
the associated family of a non-zero
 constant mean curvature (CMC in short)  
surface of revolution consists of helicoidal surfaces of constant 
mean curvature \cite{dCD}. 

As is well known, the constancy of mean curvature for surfaces 
in $\mathbb{R}^3$ is equivalent to the harmonicity of the 
Gauss map. 
Based on this fundamental connection between 
CMC surfaces and harmonic maps, we can construct 
CMC surfaces via the loop group theoretic 
Weierstrass type representation of 
harmonic maps (now referred as to 
 the generalized Weierstrass type representation)
due to Pedit, Wu and the first named 
author of the present paper \cite{DPW}. 
From the harmonic map point of view, we notice the fundamental fact that 
the Gauss map of helicoidal CMC surfaces in $\mathbb{R}^3$, 
especially CMC surfaces of revolution in $\mathbb{R}^3$, 
are symmetric harmonic maps into the unit $2$-sphere $\mathbb{S}^2$. 
 Haak \cite{Haak} gave an 
alternative proof of the do Carmo-Dajczer theorem 
by using the generalized Weierstrass type representation.
 The general theory of symmetry of CMC surfaces in 
$\mathbb{R}^3$ is well organized \cite{DH1, DH2}. 
It is known that rotationally symmetric harmonic maps of Riemann surfaces 
are characterized as those with a 
{ many surface classes.
For example, in our previous paper \cite{DIKAsian}, the present authors 
established a generalized Weierstrass type representation for minimal surfaces 
in the $3$-dimensional Heisenberg group $\mathrm{Nil}_3$ which is one of the model 
spaces of Thurston geometries \cite{Thurston}. 
In this paper we study symmetric{ minimal surfaces in 
$\mathrm{Nil}_3$ via the generalized Weierstrass type representation 
 established in \cite{DIKAsian}.

To illustrate the methods discussed in this paper, 
we present here a brief account of the geometry 
 of symmetric minimal surfaces in the Heisenberg group.
\begin{itemize}
\item In 1995, Caddeo, Piu and Ratto studied 
rotational minimal surfaces in $\mathrm{Nil}_3$. 
On the other hand, in 1999, 
Figueroa, Mercuri and Pedro
studied helicoidal CMC surfaces as well as 
translation invariant CMC surfaces (including minimal ones) 
in $\mathrm{Nil}_3$.
\item Berard and Cavalcante studied the stability 
of rotational minimal surfaces \cite{BC}. 
\end{itemize}

Since we only know few examples of 
symmetric minimal surfaces above constructed 
using exclusively methods of classical differential geometry,
it is difficult to describe the moduli spaces of 
minimal surfaces with symmetry in $\Nil$. To describe a  moduli space, 
one needs first a systematic construction of symmetric minimal surfaces. 

For this purpose we use the 
generalized Weierstrass representation (loop group method) for 
minimal surfaces in $\mathrm{Nil}_3$. 
The starting point of the generalized Weierstrass representation
is to connect minimal surfaces in $\mathrm{Nil}_3$ and 
harmonic maps into the hyperbolic $2$-space $\mathbb{H}^2$
as well as loops of flat connections (see 
Appendix \ref{app:Pre} of the present paper).  

Those interactions between minimal surfaces, harmonic maps and 
loops of flat connections are derived from 
the following important discoveries:
\begin{itemize}
\item 
In 2009, Fernandez and Mira found a correspondence 
between minimal surfaces in $\mathrm{Nil}_3$ and (non-maximal) 
spacelike CMC surfaces in Minkowski $3$-space $\mathbb{L}^3$ (see 
\cite{Fer-Mira2}). 
\item In 2005, Berdinsky and Taimanov gave a spinor representation 
and nonlinear Dirac equations of the surfaces in $\mathrm{Nil}_3$ 
\cite{BT:Sur-Lie}. Berdinski \cite{Ber:Heisenberg} obtained a system of matrix valued functions which 
has spinor field solutions to the nonlinear Dirac equations given in \cite{BT:Sur-Lie}
(see Appendix \ref{subsec:BT}). In case of minimal surfaces, Berdinsky's system 
describes harmonic maps into the Riemannian symmetric space $\mathbb{H}^2=
\mathrm{SU}_{1,1}/\mathrm{U}_1$.
\end{itemize}

 It is crucial to understand the  serious differences between 
Euclidean CMC surface theory and minimal surface theory in $\mathrm{Nil}_3$.
In the  Euclidean case, the Gauss map of a CMC surface is a harmonic map 
into the unit $2$-sphere $\mathbb{S}^2=\mathrm{SU}_2/\mathrm{U}_1$. 
Next, the universal covering group of the Euclidean 
motion group is expressed as $\mathrm{SU}(2)\ltimes\mathfrak{su}(2)$. 
Thus the special unitary group $\mathrm{SU}(2)$ acts isometrically on 
both $\mathbb{S}^2$ and $\mathbb{R}^3$.

On the other hand, 
the normal Gauss map of a minimal surface in 
$\mathrm{Nil}_3$ takes value in the hyperbolic $2$-space 
$\mathbb{H}^2=\mathrm{SU}_{1,1}/\mathrm{U}_1$. However,  the 
identity component of the isometry group of $\mathrm{Nil}_3$ is 
$\mathrm{Nil}_3\rtimes\mathrm{U}_1$. Thus there is no 
isometric action of $\mathrm{SU}_{1,1}$ 
on $\mathrm{Nil}_3$. This difference means that 
we can not associate to each $g \in \mathrm{SU}_{1,1}$ an isometry of 
$\Nil$.

From a symmetry point of view, we realize that 
one-parameter subgroups of $\mathrm{SU}_{1,1}$ act on normal Gauss maps 
as isometries, but not on the  corresponding 
minimal surfaces in $\mathrm{Nil}_3$.

Thus we can not apply the general theory of symmetric harmonic maps 
\cite{Dorfmeister:OCU, DH1, DH2} to minimal surfaces in 
$\Nil$.

To overcome these difficulties, in the present paper, we investigate first 
the action of isometries on minimal surfaces in 
$\Nil$ and their 
effects on the normal Gauss maps. In addition we describe 
 these actions as monodromy of extended frames. This enables us to study 
minimal surfaces with symmetry via loop group method. 
Based on these fundamental facts, we establish a general theory of 
minimal surfaces in $\Nil$ with symmetry. 
\Red{In this paper we consider exclusively minimal surfaces in $\Nil$ without vertical points.
In particular, we consider only symmetries associated with
transformations in the identity component of $\mathrm{SU}_{1,1}$.}

This is the first time that the 
loop group method contributes to the study of minimal surfaces in $3$-dimensional 
homogeneous Riemannian spaces 
of \emph{non-constant curvature}. 
 This paper is organized as follows. 
In Section \ref{section1}, we start with introducing the notion of 
\emph{symmetry} for surfaces in $\Nil$.  
We give a fundamental characterization of symmetric minimal surfaces 
in $\Nil$ in terms of the property of corresponding normal Gauss maps 
(Theorem \ref{thm:symmetry}). Theorem \ref{thm:symmetry} clarifies the serious differences 
between minimal surface theory in $\Nil$ and that of CMC surfaces in Euclidean $3$-space. 
Based on Theorem \ref{thm:symmetry}, we will discuss how to construct 
minimal surfaces in $\Nil$ with \emph{non-trivial topology} via the generalized 
Weierstrass type representation \cite{DIKAsian}. 
 We will give a detailed study 
of the potentials invariant under all deck transformations. One of the key clues of these 
studies is the Iwasawa decomposition of the loop group of $\mathrm{SU}_{1,1}$. 
Because of the \emph{non compactness} of $\mathrm{SU}_{1,1}$, the Iwasawa decomposition of 
loop group is much involved, see \cite{BRS:Min,DIKAsian,Kellersch}. 
 Note that in case of 
CMC surfaces in $\mathbb{R}^3$ the key clue is the loop group of the 
\emph{compact} simple Lie group $\mathrm{SU}_2$. 
The non-compactness of $\mathrm{SU}_{1,1}$ causes 
case by case studies on monodromy matrices. 
To obtain detailed information on the behavior of extended frames under 
deck transformations, we  consider 
meromorphic extensions of minimal surfaces. As a result we obtain closing conditions 
for minimal surfaces with symmetry (Theorem \ref{thm:closing}, Corollary \ref{cor:closing}). 

In Section \ref{sc:Mincyl}, we will briefly discuss 
the construction of minimal cylinders by a method which is analogous to the one
introduced in \cite{DK:cyl} for CMC cylinders in Euclidean $3$-space. 
 In particular, we will show the existence of such cylinders
 which are \emph{not} equivariant, see Example \ref{ex:cyl}.
 In \cite{DK:cylNil}, we will discuss minimal cylinders in $\Nil$  detail. 
 For later use, in Section \ref{section3}, we recall the classification 
 of \emph{homogeneous minimal surfaces} in $\Nil$. 

In the final section, we start with an explicit description of 
one-parameter groups of isometries on $\Nil$. 
Lemma \ref{lm:rhophi} and Theorem \ref{thm:oneparameter} give a
complete description of one-parameter groups of isometries of 
$\Nil$ (compare with \cite{FMP}). 
These results themselves are valuable for the Riemannian geometry of $\Nil$. 
By our results, we can arrive at the classification of 
equivariant minimal surfaces in $\Nil$ (Corollary \ref{cor:eqminsurf}). 
It turned out that equivariant minimal surfaces in $\Nil$ (in the sense of 
Definition \ref{df:equivariant}) are exhausted by 
minimal helicoidal surfaces and minimal translation invariant surfaces. 
Our goal of the present paper is to give a construction method for 
equivariant minimal surfaces in $\Nil$ via the 
generalized Weierstrass type representation. To this end, 
we need to determine the potentials (data of generalized Weierstrass 
type representation) for equivariant minimal surfaces.  For the detailed analysis 
of one-parameter groups of automorphism on Riemann surfaces and 
compatible actions of one-parameter groups of isometries of $\Nil$, 
we will introduce the notion of $\mathbb{R}$-equivariant minimal surface and 
$\mathbb{S}^1$-equivariant minimal surface in $\Nil$.
We will determine potentials for those equivariant minimal surfaces. 
We will finally give a method of construction of all equivariant minimal surfaces 
by virtue of the generalized Weierstrass type representation. 
 An explicit construction of equivariant minimal surfaces will be done 
in a future publication  \cite{K:Explicit}.

Throughout this paper we will assume that all 
Riemann surfaces occurring are connected and denote by 
$\mathbb{S}^2$, $\mathbb{H}^2$, $\C$ 
the unit sphere in $\mathbb{R}^3$, 
the unit disk (sometimes equivalently replaced by the upper half-plane $\mathbb{H}$) 
and the complex plane, respectively.
Since there does not exist any compact minimal surface in $\Nil$ \cite{Figueroa2}, 
each Riemann surface occurring in this paper will have $\mathbb{H}^2$
or $\C$ as its universal cover.

As we have pointed out  before, in this paper we use the  generalized Weierstrass type 
representation established in \cite{DIKAsian}. For the convenience of the reader we have added a fairly extensive
Appendix. 
Here we recall results of \cite{DIKAsian} which are of relevance to this paper.
But we also expand the discussion of loc.cit., where it is useful for the goals of this paper.
In Appendix \ref{app:Pre} we recall the notation and the results of Sections 1--5 of \cite{DIKAsian}.
In Appendix \ref{app:Loop} we describe in some detail the various realizations of the 
normal Gauss map in
the unit disk $\mathbb{H}^2$, the upper hemisphere 
$\mathbb{S}^2_{+}$, and the hyperboloid $\mathbb{Q}^2$
(a model of the hyperbolic $2$-space).
 This clarifies the discussion of loc.cit. We also introduce the notion of a 
\textit{general extended frame}, which is contained implicitly in loc.cit, 
 but is needed explicitly for the investigation of symmetries in this paper. 
 Appendix \ref{app:DPW} presents details beyond loc.~cit  
relating to the  representation of extended frames of harmonic maps into any of the
three realizations of $\mathbb{H}^2$  listed above, and also to the validity of Theorem 6.1 of loc.~cit
under weaker assumptions. The latter actually presents the Sym formula in the way needed for loc.~cit and this paper. 
 We thus have corrected the phrasing of the statement of Theorem 6.1, loc.~cit. 
 The proof was given under the weaker assumptions already in loc.~cit.
 In Appendix \ref{sc:Meroextension} we prove that essentially all (anyway real analytic) geometric matrix functions
 occurring in this paper can be extended to globally meromorphic matrix functions in two independent complex variables. 
 This is needed in Section \ref{sbsc:MeroExt} of this paper.
 Finally, the last Appendix \ref{mysterious} gives a geometric meaning to the linear isomorphism 
 from $\isu$ to $\mathfrak{nil}_{3}$, used in the proof of the Sym formula Theorem 6.1. of \cite{DIKAsian}.


\section{Minimal surfaces with symmetries in $\Nil$}
\label{section1}
 In this section, we discuss symmetries of minimal surfaces in the 
 $3$-dimensional Heisenberg group $\Nil$. For fundamental properties
 of the homogeneous Riemannian space $\Nil$, we refer to our previous paper \cite{DIKAsian} or to
 Appendix \ref{subsc:nil3}. 
 Since there does not exist any compact minimal surface (without boundary) 
 in $\Nil$, we will discuss in this paper exclusively non-compact Riemann surfaces.

 A symmetry of some surface $S$ in some (metric) space $N$ 
 is an isometry $\rho$ of 
 $N$ which maps $S$ onto itself: $\rho(S) = S$. In this paper we consider 
 the case, where $\rho$ is an orientation preserving isometry of $\Nil$. 
 It turns out (see Theorem \ref{thm:X1}) that in some 
 cases a symmetry is implemented by a pair of maps $(\gamma, \rho)$ such that 
 the minimal surface $f: \Rim \rightarrow \Nil$ satisfies $f(\gamma.p) = \rho.f(p)$ 
 for all $p \in \Rim$, with some Riemann surface $\Rim$ and automorphism 
 $\gamma \in \Aut (\Rim)$.
 Thus we start from the following definition of symmetric surfaces in 
 a Riemannian manifold.
 We will denote by $\operatorname{Iso} (N)$ the group of isometries of 
 $N$ and by  $\operatorname{Iso}_{\circ} (N)$ its connected 
 identity component.
 \begin{Definition}
 Let $f: \Rim\to N$ be a map from a Riemann surface $\Rim$ 
 into a Riemannian manifold 
 $N$. Moreover,  let $\gamma$ and $\rho$ be elements of $\Aut (\Rim)$ and 
 $\operatorname{Iso} (N)$, respectively. 
 Then $f$ is \textit{symmetric with 
 respect to $(\gamma, \rho) \in \Aut (\Rim) \times  
 \operatorname{Iso}(N)$} 
 if 
\begin{equation} \label{defsym}
 f \circ \gamma = \rho \circ f
\end{equation}
 holds.
\end{Definition}

\subsection{Navigating between a Riemann surface and its universal cover}
\label{subsc:Navigating}
We will frequently consider a conformal immersion 
 $f:\mathcal{R} \rightarrow \Nil$ 
 from some Riemann surface  $\mathcal{R}$ into 
 the $3$-dimensional Heisenberg group $\Nil$ and its lift 
 $\tilde{f}: \widetilde{\mathcal{R}} \rightarrow \Nil$ 
 to the universal cover $\widetilde{\mathcal{R}}$ of 
 $\mathcal{R}$. Then
\[
 \tilde{f} = f \circ \pi_\Rim,
\]
where $\pi_{\Rim} : \widetilde{\mathcal{R} } \rightarrow \mathcal{R}$ 
denotes the natural projection. 

Following the procedure of \cite{DIKAsian} we need to consider 
a matrix valued function 
$\varPhi$, the generating spinors $\psi_j$ and an extended frame $F$ 
for the discussion of $f$ and the corresponding objects, 
capped with a ``$\sim$'' for $\tilde{f}$ 
(see Appendix \ref{subsc:SurfaceTheory}).

 Note that extended frames are always defined on the universal cover of a given Riemann surface, whence we always drop the superscript 
``$\sim$''  for extended frames.
Then we obtain, see also the appendix \ref{subsc:SurfaceTheory},
\[
 f^{-1}\pz f=  \varPhi =\sum_{k=1}^3 \phi_k e_k
\]
 with respect to the natural 
 basis $\{e_1,e_2,e_3\}$ of Lie algebra $\mathfrak{nil}_3$ of $\Nil$
 and the corresponding representation for $\tilde{f}$. Here $\pz$ and $\pzb$ are defined as
\[
 \pz = \frac{1}{2}\left( \frac{\partial}{ \partial x} - i \frac{\partial}{ \partial y}\right), \quad 
 \pzb = \frac{1}{2}\left( \frac{\partial}{ \partial x} + i \frac{\partial}{ \partial y}\right)
\]
 for a conformal coordinate $z = x + i y$.
 Hence
 \[
  \tilde{\varPhi} = \varPhi \circ \pi_\Rim\quad \mbox{and}\quad
 {\tilde{\phi}}_j = \phi_j \circ \pi_\Rim 
 \]
  for $j = 1$,$2$,$3$.
 It will be convenient to  abbreviate 
\[
f(z, \bar z)= (f_1(z, \bar z),f_2(z, \bar z),f_3(z, \bar z))
\]
by $f(z) = (f_1(z),f_2(z),f_3(z))$. Then 
 \begin{align}
f(z)^{-1} \pz f(z) &=  \frac{d}{dt}\Big|_{t=0} \left(f(z)^{-1} f(z + t)\right) \label{eq:derivative}\\
 &=  \left(\pz f_1(z), \pz f_2(z), \pz f_3(z) + 
 \frac{1}{2}(- f_1(z) \pz f_2(z) + f_2(z) \pz f_1(z))\right), \nonumber  
 \end{align}
 in view of the fact that the product in $\Nil$ is given by the formula 
 (see also appendix \ref{subsc:nil3}):
 \begin{equation}\label{eq:Nilmult}
  (x_1,x_2,x_3) \cdot (u_1,u_2,u_3) = \left(
 x_1+u_1, x_2+u_2, x_3+u_3 + \frac{1}{2}(x_1u_2 - x_2u_1)
 \right).
 \end{equation}
 Now let us consider the generating spinors 
$\psi_1(dz)^{1/2}$ and $\psi_2(d\bar{z})^{1/2}$ of the conformal immersion 
$f:\mathcal{R}\to\Nil$ (see \cite[Section 3]{DIKAsian} or appendix \ref{subsc:SurfaceTheory}).  
  
We need to express  $\tilde{\phi}_j $ and  $\phi_j$ by the $\tilde \psi_j$ and $\psi_j$ respectively. These functions are uniquely defined up to a sign and from the defining equation we obtain 
 $\tilde{\psi}_j^2 = \psi_j^2 \circ \pi_\Rim$.
 Since the choice of sign has no effect on the discussion of minimal surfaces in $\Nil$, without loss of generality we choose the sign such that 
 \[
  \tilde{\psi}_j = \psi_j \circ \pi_\Rim.
 \]
 Next we discuss the relation between the 
 normal Gauss maps of $f$ and $\tilde{f}$. 
 The left translated unit normal of $f$ in $\mathfrak{nil}_3$ to the origin
 of a conformal immersion
 $f:\mathcal{R}\to\Nil$ takes value in the hyperboloid model 
 $\mathbb{Q}^2$ of the hyperbolic 
 $2$-space $\mathbb{H}^2$ embedded in the Minkowski $3$-space $\Min$, see
 Appendix \ref{subsc:Realizing} or  \cite{DIKAsian}.
 Via its stereographic projection of $\mathbb{Q}^2$ onto $\mathbb{H}^2$, 
 we obtain a map $g$ into the Poincar{\'e} disk $\mathbb{H}^2$ and call it the 
 \textit{normal Gauss map}.
  
 Since the normal Gauss maps $g$ and $\tilde{g}$ of $f$ and 
 $\tilde{f}$ are expressed by the corresponding generating spinors 
 (which have the relation stated above) it is clear that we also have
\[
 \tilde{g} = g \circ \pi_\Rim.
\]
 Considering now a map $f$ with a symmetry $(\gamma, \rho)$, 
 that is,  
 satisfying equation \eqref{defsym}, 
 we obtain the corresponding equation 
\[
 \tilde{f} (\tilde{\gamma} . z )  = \rho.\tilde{ f} (z),
\]
 where $\tilde{\gamma}$ denotes the automorphism of 
 $\widetilde{ \mathcal{R} }$  induced by $\gamma$.

\subsection{The transformation behaviour of the generating spinors,
the normal Gauss map and the extended frame}

 First we recall from \cite{DIKAsian} that the isometry group of $\mathrm{Nil}_3$ has two connected components. 
 The identity component acts by orientation preserving diffeomorphisms 
 and the elements of the other connected component reverse the orientation. 
 In this paper we will consider exclusively orientation preserving 
 transformations and therefore will only consider 
 $\mathrm{Iso}_{\circ}(\mathrm{Nil}_3)$,
 the identity component of the isometry group $\iso$ of $\mathrm{Nil}_3$. 
 We recall that $\mathrm{Iso}_{\circ}(\mathrm{Nil}_3)$ is isomorphic to the 
 the semi-direct product $\mathrm{Nil}_3\rtimes \mathrm{U}_1$  of $\mathrm{Nil}_3$ and 
 $\mathrm{U}_1$ , $\mathrm{Iso}_{\circ}(\mathrm{Nil}_3) \cong \mathrm{Nil}_3\rtimes \mathrm{U}_1$, 
 with  the action:
\begin{equation}\label{eq:action}
 ((a_1, a_2, a_3), e^{i \theta}) . (x_1, x_2, x_3)
 =(a_1, a_2, a_3) \cdot (\cos \theta x_1 - \sin \theta x_2, \sin \theta x_1 + 
 \cos \theta x_2, x_3),
 \end{equation}
 where ``$\, \cdot\, $'' denotes the product in $\Nil$ defined by
 \eqref{eq:Nilmult}.
 Since $\Nil \subset \mathrm{Nil}_3\rtimes \mathrm{U}_1 $ 
 is normal in $\isoo$ we can write $\rho$ as
\[
 \rho = p s,
\]
 where $p \in \Nil$ and $s = e^{i \theta} \in  \mathrm{U}_1$.

 The Lie algebra $\mathfrak{iso}(\mathrm{Nil}_{3})$ of 
$\mathrm{Iso}_{\circ}(\mathrm{Nil}_3)$ is generated 
 by four Killing vector fields 
\begin{equation}\label{eq:Ej}
 E_1 = \frac{\partial}{\partial x_1} -\frac{1}{2} x_2 
 \frac{\partial}{\partial x_3}, \;
E_2 = \frac{\partial}{\partial x_2} +\frac{1}{2} x_1 \frac{\partial}{\partial x_3}, \;
 E_3= \frac{\partial}{\partial x_3}\quad\mbox{and}
 \quad E_4=-x_{2}\frac{\partial}{\partial x_1}+x_{1}\frac{\partial}{\partial x_2},
\end{equation}
 respectively.
 The commutation relations are respectively
\[
[E_4,E_1]=E_2,
\quad
[E_4,E_2]=-E_1
\quad\mbox{and}\quad
[E_1,E_2]=E_3.
\]
 Next we recall from Appendix \ref{subsc:SurfaceTheory} 
 of the appendix the notation:
 $f^{-1}\pz fdz=\varPhi{d}z$ on a simply connected domain 
 $\mathbb{D}$ that takes values 
 in the complexification $\mathfrak{nil}_3^{\mathbb{C}}$ of the 
 Lie algebra $\mathfrak{nil}_3$. With respect to the natural 
 basis $\{e_1,e_2,e_3\}$ of $\mathfrak{nil}_3$, we expand $\varPhi$ as 
 $\varPhi =\sum_{k=1}^3 \phi_k e_k$ and
 obtain  $(\phi_1)^2+(\phi_2)^2+(\phi_3)^2=0,$ since $f$ is conformal.

 \begin{Theorem} Let $f: \D \rightarrow \Nil$ be a minimal surface in $\Nil$ and $(\gamma,\rho)$ a symmetry of $f$. Writing $\rho = ps,$  where $p \in \Nil$ and $s = e^{i \theta} \in  \mathrm{U}_1$ as above, we obtain for $f \circ \gamma$ the transformation formula
 \begin{align}
  f(\gamma. z)^{-1} \hat \pz f(\gamma. z) &=  (s. f(z))^{-1} (s. \hat \pz f(z)) \\ 
 &=    \left(  \cc \hat \pz f_{1} - \s \hat \pz f_{2} ,
\s  \pz f_{1} +\cc  \hat \pz f_{2}, \hat \pz f_{3} +     
\frac{1}{2}(- f_1 \hat\pz f_{2} + f_2 \hat \pz f_{1}) \right), 
\nonumber
\end{align}
where $\hat \partial = \frac{\partial}{\partial(\gamma.z)}$, $\cc = \cos \theta$ and $\s = \sin \theta$.
 \end{Theorem}
 
\begin{proof} Recall that we will use the abbreviation  $f(z, \bar{z}) = f(z)$.
 Now consider the equation $\rho. f(z)  = (ps). f(z)$ 
 and differentiate. By the formula for the action of $\rho$ defined 
 in \eqref{eq:action}, we obtain $\hat \pz (ps. f(z)) = ps.(\hat \pz f(z))$, where  $ps.\pz f$ denotes 
 the action of $ps \in \isoo$ 
 on the tangent bundle 
 ${\rm T\Nil} \cong \Nil\ltimes\mathfrak{nil}_3$.
 Hence 
\[
 f(\gamma. z)^{-1} \hat \pz f(\gamma. z)= (\rho.f(z))^{-1} \hat \pz(\rho. f(z))
 = (s.f(z))^{-1}. p^{-1}.p.(s. \hat \pz f(z)), 
\]
 thus
\[
 f(\gamma. z)^{-1} \hat \pz f(\gamma. z) = (s. f(z))^{-1} (s. \hat \pz f(z)).
\] 
 Clearly, the right side only involves the ``fiber rotation'' given by $\theta$.
 From \eqref{eq:action}, we obtain
\begin{equation*}
 s.(\hat \pz f_{1}, \hat \pz f_{2}, \hat \pz f_{3})
 = (\cc \hat \pz f_{1} -\s  \hat \pz f_{2}, \s  \hat \pz f_{1} + 
 \cc  \hat \pz f_{2}, \hat \pz f_{3}),
\end{equation*}
 where $\cc = \cos \theta$ and $\s = \sin \theta$.
 Thus in view of the  formula given in \eqref{eq:derivative}, 
 we obtain
\begin{align*}
 (s.f)^{-1} (s.\hat \pz f) &= 
 \left( -(\cc f_1 -\s f_2),  -(\s f_1 + \cc f_2), -f_3\right) \cdot 
 ( \cc \hat \pz f_{1} - \s \hat \pz f_{2}, \s \hat \pz f_{1} + \cc 
 \hat \pz f_{2}, \hat \pz f_{3} )\\
&= \big( 
\cc \hat \pz f_{1} - \s \hat \pz f_{2},\; 
\s \hat \pz f_{1} +\cc  \hat \pz f_{2},  \\
&\phantom{==} \left.  \hat \pz f_{3} + 
 \tfrac{1}{2}\left\{ - (\cc f_1 -\s f_2)(\s \hat \pz f_{1} + \cc \hat \pz f_{2}) +
  (\s f_1 + \cc f_2)(\cc \hat \pz f_{1}-\s \hat \pz f_{2})\right\} \right) \\ 
&=\left(  \cc \hat \pz f_{1} - \s \hat \pz f_{2} ,
\s  \hat \pz f_{1} +\cc  \hat \pz f_{2}, \hat \pz f_{3} +     
\frac{1}{2}(- f_1 \hat \pz f_{2} + f_2 \hat \pz f_{1}) \right).
\end{align*}
This completes the proof.
\end{proof}
\begin{Corollary}
 Retaining the notation used above we obtain the following formula $:$
\begin{align*}
(f \circ \gamma)^{-1} \hat \pz (f \circ \gamma) = \left(\hat \phi_1, \hat \phi_2,  \hat \phi_3 \right)  
= \left( \overline{\hat \psi_2}^2 -\hat \psi_1^2, \;
i \left(\overline{\hat \psi_2}^2 + \hat \psi_1^2\right), \;
2 \hat \psi_1 \overline{\hat \psi_2}\right),
\end{align*}
 where $\hat \pz = \frac{\partial}{\partial (\gamma .z)}$, 
 $\hat{\phi}_j, (j=1, 2, 3)$ and $\hat{\psi}_j, (j=1, 2)$
 are the components of $(f \circ \gamma)^{-1} \hat \pz ( f \circ \gamma)$
 and the corresponding spinors respectively.
 From the last section we know 
 $f^{-1} \pz f = (\phi_1, \phi_2, \phi_3)$,
 hence
\[
\begin{pmatrix}
\hat \phi_1 \\ \hat \phi_2 
\end{pmatrix}
= \frac1{\partial (\gamma. z)}
\begin{pmatrix} 
\cc  & - \s  \\
\s & \cc
\end{pmatrix} 
\begin{pmatrix}\phi_1 \\ \phi_2 \end{pmatrix}.
\]
Moreover,
\[
 \hat{\psi}_1 = \epsilon e^{i \theta/2} \psi_1 
 (\partial (\gamma. z))^{-1/2}\quad \mbox{and} \quad 
 \hat{\psi}_2 = \epsilon e^{i \theta/2} \psi_2
 (\overline{\partial (\gamma. z)})^{-1/2},
\]
with $\epsilon = \pm 1$.
\end{Corollary}

\begin{proof}
It only remains to prove the last two claims. To verify this we observe that from the matrix equation we infer 
\begin{align*}
\begin{cases}
2 \overline{ \hat {\psi}_2}^2  = \hat \phi_1 - i \hat \phi_2
= \partial (\gamma.z)^{-1}\left(\cc \phi_1 -\s \phi_2 -i \s \phi_1 -i \cc \phi_2 \right)
 = 2 \partial (\gamma.z)^{-1}(\cc - i \s )\overline{\psi_2}^2 \\
2 { \hat{\psi}_1}^2  =  - \hat \phi_1  -  i \hat \phi_2 
=  - (\partial (\gamma.z))^{-1}\left(\cc \phi_1 - \s \phi_2 + i \s \phi_1 + i \cc \phi_2\right)
= 2(\partial (\gamma.z))^{-1} (\cc + i \s)\psi_1^2
\end{cases}.
\end{align*}
Thus we obtain in view of the relations discussed in the previous subsection$:$
\begin{equation} \label{trafopsihat}
 \hat{\psi}_1 = \epsilon_1 e^{i \theta/2} \psi_1 (\partial (\gamma. z))^{-1/2}\quad \mbox{and} \quad 
 \hat{\psi}_2 = \epsilon_2 e^{i \theta/2} \psi_2(\overline{\partial (\gamma. z)})^{-1/2},
\end{equation}
 with $\epsilon_j = \pm 1$.
 The equation above for $ (s.f)^{-1} (s.\pz f) $ shows that the third 
 component does not change with $\theta$. Therefore we have
 $\hat \psi_1 \overline{\hat \psi_2} \partial (\gamma. z)^{-1} =  \psi_1 \overline{\psi_2}$
 and $\epsilon_1 = \epsilon_2 =  \epsilon = \pm1$ follows.
\end{proof}
 As a consequence, the normal Gauss map satisfies the following transformation formula
\begin{equation}\label{eq:Gausstrans}
g(\gamma. z) = \frac{\>\>\hat \psi_2\>\>}{\overline{\hat \psi_1}}
= \frac{\>\>e^{i \theta/2}\>\psi_2\>\>}{e^{-i\theta/2}\>\overline{\psi_1}}
= e^{i \theta} g(z).
\end{equation}
This shows that $g(\gamma.z) = R. g(z)$,  that is, 
\begin{Corollary}
Retaining the notation above, the normal Gauss map has the transformation behaviour
$g\circ \gamma = R \circ g,$
where $R$ is the rotation about 
 $0 \in \mathbb H^2$ by the angle $\theta$.
 \end{Corollary}
 Next we consider an extended frame $F$ of the minimal surface $f$ in $\Nil$. 
 By equation \eqref{eq:twoextmin} we know   that any other extended 
 frame $\tilde F$ of $f$ is given by  $\tilde F = A F$
 for  some $A \in \LISU$ such that  $A|_{\l=1}= \id$.
 Applying this to $\hat{f} = f \circ \gamma$ we obtain in view 
 of  \eqref{trafopsihat} the equation
 \begin{equation}\label{eq:transofF}
 F(\gamma.z, \overline{\gamma.z}, \lambda =1) = M(\gamma, \lambda =1) F(z, \bar z, \lambda = 1) k(\gamma, z, \bar z)
 \end{equation}
  where
 \begin{equation} \label{trafoextframe}
 M(\gamma, \lambda=1)  = \di (e^{i\frac{\theta}{2}}, e^{-i \frac{\theta}{2}}),  
\quad 
 k(\gamma, z, \bar z) = \di 
 \left(\frac{\left|\sqrt{\partial(\gamma.z)}\right|}{\sqrt{\partial(\gamma.z)}},
 \frac{\left|\sqrt{\partial(\gamma.z)}\right|}{\overline{\sqrt{\partial(\gamma.z)}}}  \right) \in\Uone,
 \end{equation}
 and in particular $M( 1, \lambda=1) =\id$.
%

\subsection{Characterizing symmetries of a minimal immersion by symmetries 
of its associated normal Gauss map}
 In the theorem below we characterize symmetric minimal surfaces in $\Nil$
 by symmetric harmonic normal Gauss maps. 
 Note that the unit disk $\mathbb{H}^2$ is 
  represented in the form $\mathbb{H}^2=\mathrm{SU}_{1,1}/\mathrm{U}_1$ as a 
  \textit{Riemannian symmetric space}, where
 $\mathrm{SU}_{1,1}$ acts by M\"{o}bius transformations and the base point is $z = 0$.
\begin{Theorem}\label{thm:symmetry}
 Let $\mathcal{R}$  be a Riemann surface, $f : \Rim \to \Nil$ 
 a minimal  surface and $g : \Rim \to \mathbb H^2$ the normal 
 Gauss map of $f$.  Then the following statements hold$:$
\begin{enumerate}
\item[(a)] 
 If $f$ is symmetric relative 
 to $(\gamma, \rho)$,  then $g$ is symmetric relative to $(\gamma, R)$, 
 that is, 
\begin{equation*}\label{eq:X2}
 g \circ \gamma = R \circ g
\end{equation*}
 holds, where $R$ is a rotation about $0 \in \mathbb H^2$ such that 
 the angle of $R$ is given by that of the fiber rotation of $\rho$.

 \item[(b)] Conversely, if $g$ is symmetric with respect to $(\gamma, R)$ 
 such that $R$ is a rotation about 
 $0 \in \mathbb H^2$,  then $f$ is symmetric
  with respect to $(\gamma, \rho)$, that is, 
\begin{equation}\label{eq:X3}
 f \circ \gamma = \rho \circ f
 \end{equation}
 holds, where $\rho$ is an  element in $\isoo$  and such that the angle of the fiber rotation 
 of $\rho$ is given by that of $R$.
\end{enumerate}
\end{Theorem}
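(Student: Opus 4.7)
The proof naturally splits along the semi-direct product decomposition $\isoo \cong \Nil \rtimes \Uone$. Write $\rho = (a, e^{i\theta})$ with $a \in \Nil$, so that the fiber rotation of $\rho$ has angle $\theta$. In both directions the task is to match this $\theta$ with the rotation angle of $R$ about $0 \in \mathbb{H}^2$, and both hinge on the transformation behavior of $\Phi = f^{-1}\pz f$ and the generating spinors $(\psi_1, \psi_2)$ under each factor.

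For part (a), left-invariance of the Maurer--Cartan form immediately shows that the $\Nil$-factor $(a,1)$ leaves $\Phi$, the spinors $\psi_j$, and hence $g$ pointwise unchanged. For the fiber rotation $(0, e^{i\theta})$, a direct calculation from \eqref{eq:action}, \eqref{eq:derivative} and \eqref{eq:Nilmult} yields $\phi_1 + i\phi_2 \mapsto e^{i\theta}(\phi_1 + i\phi_2)$ while $\phi_3 \mapsto \phi_3$. Since the normal Gauss map $g$ is built from the generating spinors (as recalled in Appendix \ref{subsc:SurfaceTheory} from \cite{DIKAsian}) in such a way that this $e^{i\theta}$-rotation at the level of $\phi_1 + i\phi_2$ translates, on $g$ viewed in the Poincar\'e disk model, to rotation about $0$ by angle $\theta$, combining with the hypothesis $f \circ \gamma = \rho \circ f$ gives $g \circ \gamma = R \circ g$ with the asserted $R$.

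For part (b), set $\rho_0 := (0, e^{i\theta}) \in \isoo$ where $\theta$ is the angle of $R$. By (a), the minimal immersion $\rho_0 \circ f$ has normal Gauss map $R \circ g$, which by hypothesis coincides with $g \circ \gamma$, the Gauss map of $f \circ \gamma$. Thus $f \circ \gamma$ and $\rho_0 \circ f$ are two minimal immersions of $\Rim$ into $\Nil$ sharing the same normal Gauss map. One then invokes the rigidity afforded by the generalized Weierstrass representation of \cite{DIKAsian}: a minimal immersion into $\Nil$ is determined by its normal Gauss map up to a left $\Nil$-translation. Hence there exists $a \in \Nil$ such that $f \circ \gamma = L_a \circ \rho_0 \circ f$, and $\rho := (a, e^{i\theta}) \in \isoo$ is the sought isometry realizing \eqref{eq:X3}.

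The principal obstacle is the rigidity statement deployed in (b). Unlike the CMC theory in $\R^3$, the non-compactness of $\mathrm{SU}_{1,1}$ and the resulting case-by-case Iwasawa decomposition make the passage from the extended frame to the surface via the Sym-type formula more delicate. One must verify that the residual freedom in reconstructing $f$ from its Gauss map lies entirely in the $\Nil$-factor of $\isoo$ and not in a hidden $\Uone$-twist which would otherwise contaminate the fiber rotation of $\rho$ on the right-hand side of \eqref{eq:X3}. Together with the bookkeeping of phases from $\phi_1 + i\phi_2$ through the spinors to $g$ required in (a), this constitutes the technical core of the proof.
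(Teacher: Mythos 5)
Your part (a) is essentially the paper's argument: the $\Nil$-factor of $\rho$ drops out by left-invariance of $\varPhi=f^{-1}\pz f$, and the fiber rotation sends $\phi_1+i\phi_2\mapsto e^{i\theta}(\phi_1+i\phi_2)$ while fixing $\phi_3$. The only step you compress is the sign bookkeeping: from $\hat\psi_1^2=e^{i\theta}\psi_1^2$ and $\overline{\hat\psi_2}^2=e^{-i\theta}\overline{\psi_2}^2$ one only gets $\hat\psi_j=\epsilon_j e^{i\theta/2}\psi_j$ with a priori independent signs, and it is precisely the invariance of $\phi_3=2\psi_1\overline{\psi_2}$ that forces $\epsilon_1\epsilon_2=1$, hence $g\circ\gamma=e^{i\theta}g$ rather than $-e^{i\theta}g$. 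This is recoverable from what you wrote, so (a) is fine.

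Part (b) contains the genuine gap, and you have located it yourself without closing it. Your whole argument rests on the rigidity statement ``a minimal immersion into $\Nil$ is determined by its normal Gauss map up to a left $\Nil$-translation,'' which you attribute to \cite{DIKAsian} but do not prove; it is not among the results of \cite{DIKAsian} recalled in the Appendix (the closest is the footnote to the Sym formula, which concerns a single surface and its own canonical frame), and it is not a formality. If $f\circ\gamma$ and $\rho_0\circ f$ merely share the normal Gauss map, their generating spinors are related by $\tilde\psi_1=v\psi_1$, $\tilde\psi_2=\bar v\psi_2$ for some unimodular function $v$, which gives $\tilde\varPhi=v^2\varPhi$; one must then exclude, for instance, $v^2=-1$ (the resulting $\varPhi\mapsto-\varPhi$ is induced by no element of $\isoo$, since a fiber rotation fixes the $e_3$-component of $\varPhi$), and doing so requires the nonlinear Dirac equation or the loop-group machinery. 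In effect the uniqueness you need is Daniel's theorem \cite{Daniel:GaussHeisenbergw}, or must be re-derived from the Sym formula---which is exactly what the paper's proof does instead. The paper never invokes rigidity: it lifts $g\circ\gamma=R\circ g$ to the extended frame, obtains $F(\gamma.z,\overline{\gamma.z},\lambda)=M(\gamma,\lambda)F(z,\bar z,\lambda)k(\gamma,z,\bar z)$, and computes directly how \eqref{eq:symNil} transforms under left multiplication by $M$, producing the explicit isometry $\rho=((p,q,r),e^{i\theta})$ with $(p,q,r)$ read off from $X=-i\lambda(\partial_\lambda M)M^{-1}$ and $Y=-\tfrac{i}{2}\lambda\partial_\lambda X$. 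That explicit formula is not optional decoration: it is reused verbatim to derive the closing conditions in Theorem \ref{thm:closing} and the computation of $\rho_t$ for equivariant surfaces. So even after patching your gap by citing Daniel, your route proves only the existence of $\rho$ and forfeits the monodromy formula the rest of the paper depends on.
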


\begin{proof} Recall that we will use the abbreviation  $f(z, \bar{z}) = f(z)$.
 
 Part (a): 
The claim follows from \eqref{eq:Gausstrans}.

 Part (b): 
 Let $g :  \Rim \to \mathbb H^2$ be the normal Gauss map of $f$
 and assume $g(\gamma.  z) = e^{i\theta} g (z) = R. g(z)$ holds.
 Since $f$ is already defined on $\Rim$, it is easy to see that it suffices to verify 
 equation \eqref{eq:X3} on the universal cover. Hence we can assume 
 without loss of generality that $\mathcal{R}$ is simply-connected. 

 Let $F$ be an extended frame of the minimal surface $f$ 
 as in \eqref{special frame}
 such that the immersion $(\Xi_{\mathrm{nil}}\circ \hat{f})|_{\l = 1}$ 
 obtained by inserting $F$ 
 into the Sym formula \eqref{eq:symNil} at $\lambda=1$ becomes 
 the original minimal surface $f$. 
 (Note that such an extended frame exists by  Theorem \ref{thm:Sym}.) 
 
 Then the extended frame $F$ 
 of $f$ satisfies  
 \begin{equation}\label{eq:transext}
 \hat F( z, \bar z, \lambda) = F(\gamma. z, \overline{\gamma. z}, \lambda) =
 M(\gamma,\lambda) F(z, \bar z, \lambda) k(\gamma,z, \bar z),
 \end{equation}
 where
 \begin{equation*}
M(\gamma, \lambda=1)  = \di (e^{i\frac{\theta}{2}}, e^{-i \frac{\theta}{2}}),  
\quad  \mbox{and in particular} \quad  M( 1, \lambda=1) =\id,
 \end{equation*}
 and $k(\gamma, z, \bar z)$ is a 
 $\l$-independent $\mathrm{U}_1$-valued map, 
 see also Proposition \ref{prp:transformation}.
 So far, 
 in the last equation, $M$ and $k$ may not be defined uniquely. 
However, since the monodromy of $g$ is a one-parameter group, the lift $F$, 
for $\lambda = 1$,  inherits the property of having a one-parameter group of 
monodromy matrices. As a consequence, the matrix $k$ is a 
 crossed homomorphism, see also Section \ref{sbsc:inv}.
The introduction of $\lambda$ does not change $k$, whence the monodromy matrix is a  ($\lambda$-dependent) one-parameter group.
 From this the representation above follows uniquely.

 Now a straightforward computation 
 shows that $\hat f$ changes 
 by $\gamma$ as 
 \begin{align*}
 \hat f(\gamma. z) = &
 \left(\ad (M) f_{\Min}(z) +
 X\right)^o  \\
 &\nonumber+  \left(\ad(M) 
 \left(-\frac{i}{2} \lambda \partial_{\lambda} f_{\Min}(z)\right)
 + \frac{1}{2}[X, \ad(M) f_{\Min}(z)] 
 +Y\right)^{d},
\end{align*}
 and thus 
\begin{align*}
 \hat f(\gamma. z)|_{\l=1}  =& 
 \left.\left\{\ad(M) \hat f(z)  + X^o 
 + \frac{1}{2}\left( [X, \ad(M) f_{\Min}(z)]\right)^d 
 +Y^d\right\}\right|_{\lambda =1}
 \end{align*}
 where $X$ and $Y$ are defined by 
\[
 X =  -i \lambda (\partial_{\lambda} M) M^{-1}, \quad\mbox{and}\quad
 Y =- \frac{i}{2} \lambda \partial_{\lambda} X 
 = - \frac{1}{2}  \lambda \partial_{\lambda} (\lambda (\partial_{\lambda} M) M^{-1}),
\]
 respectively.\footnote{$X$ and $Y$ are slightly different from $X_{\lambda}$ and $Y_{\lambda}$
 defined in \cite{DIKAsian}, that is, $X = -X_{\lambda}$ and $Y = \frac{1}{2} Y_{\lambda}$, 
 respectively. }
 Note $[X, \ad (M) f_{\Min}(z)]^d = [ X^o, (\ad (M)  f_{\Min}(z))^o]^d$ 
 and $(f_{\Min}(z))^o = (\hat{f}(z))^o$.
 Then we set 
\begin{align*}
 X|_{\lambda =1} &=p \mathcal E_1 + q  \mathcal E_2 + * \mathcal E_3=
 \frac{1}{2}
 \begin{pmatrix} 
  * &  -q + i p  \\
  -q -i p &  *
 \end{pmatrix}, 
\intertext{and}
 Y|_{\lambda =1} &=*  \mathcal E_1 + *  \mathcal E_2 + r  \mathcal E_3=
\frac{1}{2}
 \begin{pmatrix} 
  - i r &  * \\
  *& i r
 \end{pmatrix},
\end{align*}
 where the basis $\mathcal E_i (i=1, 2, 3)$ was defined in \eqref{eq:basis}, 
 $p$, $q$, $r$ are some real constants. Altogether this shows
 \begin{align*}
 \hat f(\gamma. z)|_{\l=1}  =& 
 \left.\left\{\ad(M) \hat f(z)  
 + \frac{1}{2}\left( [X^o, (\ad(M) f_{\Min}(z))^o]\right)^d 
 + T \right\}\right|_{\lambda =1}
 \end{align*}
where
\begin{equation*}
T = \frac{1}{2}
 \begin{pmatrix} 
  - i r &  -q + i p \\
  -q -ip& i r
 \end{pmatrix},
\end{equation*}
 Hence $\hat f$ and thus the resulting minimal surface 
 $f= (\Xi_{\mathrm{nil}}\circ \hat{f})|_{\l = 1}$ 
 in $\Nil$ is symmetric with respect to 
 $(\gamma, \rho)$, that is, 
\[
 f(\gamma. z) = \rho. f(z),
\] 
 holds, where $\rho$ is given by $\rho = ((p, q, r), e^{i \theta})$.
 The angle of fiber rotation is clearly given by that of $R$.
\end{proof}
\begin{Remark}
\mbox{}
\begin{enumerate}
 \item 
 Part (a) in Theorem \ref{thm:symmetry} 
 is due to Daniel \cite{Daniel:GaussHeisenbergw} in the case 
 where either  $\rho$ is a translation 
 by an element of $\Nil$ or a rotation.
 \item The proof of part (a) above  works for general  
 $\rho \in \isoo$ and part (b) proves the converse of part (a).
\item  We would like to point out that part (a) actually holds 
 for any surface in $\Nil$. In the proof 
 of part (b) we used the Sym-formula for minimal surfaces. Thus 
 at this point we do not know whether it holds for any surface in $\Nil$, or not.
\end{enumerate}

\end{Remark}

\section{Minimal surfaces in $\Nil$ from non-simply-connected surfaces}
 In this section we will discuss how one can construct minimal surfaces 
 in $\Nil$ which are defined on a \emph{non-simply-connected} Riemann 
 surface $\Rim$.
 The description will use potentials as discussed in \cite{DIKAsian}. 
 We will discuss the corresponding closing conditions of the monodromy
 representation of the fundamental group $\pi_1(\Rim)$.
 There are naturally two parts in this discussion.
\subsection{Invariant potentials}\label{sbsc:inv}
 Let $\Rim$ be an arbitrary connected non-compact Riemann surface and 
 $\pi_\Rim: \widetilde{\Rim} \rightarrow \Rim$ its universal cover.
 Let $f:\Rim \rightarrow \Nil$  be a minimal surface.
 Then also $\tilde{f}: \widetilde{\Rim} \rightarrow \Nil$, 
 defined by $\tilde{f} = f \circ \pi_\Rim$ is a minimal surface. 
 Clearly, this surface satisfies $\tilde{f} \circ \tau = \tilde{f}$ 
 for all $\tau \in \pi_1(\Rim)$, where the latter group is considered 
 as the group of deck transformations of $\mathcal{R}$ acting on 
 $\widetilde{\Rim}$. For a minimal surface in $\Nil$
 we have always considered the corresponding normal Gauss map.
 In the present situation  we obtain two normal Gauss maps, 
 $g: \Rim \rightarrow \mathbb{H}^2$  for $f$ and $\tilde{g}: \widetilde{\Rim } :\rightarrow \mathbb{H}^2$  for $\tilde{f}$. 
 They are related by $\tilde{g} = g \circ \pi_\Rim$.
 Let $\tilde{F}$ denote the extended frame of $\tilde{g}$.
 (For more on the relation between the surface and its lift to the universal cover, see Section \ref{subsc:Navigating}.)

 Hereafter we use loop groups for our study. 
We refer to Appendix 
\ref{sc:loopgroups} for fundamental facts on loop groups 
used frequently in this paper.
 \begin{Proposition}\label{prp:transformation}
 For any extended frame $\tilde F$ of $\tilde g$ and for every $\tau \in \pi_1(\Rim)$, 
 there exists some diagonal matrix 
 $\tilde{k}(\tau, z, \bar{z})$ in $ \Uone$ and $M(\tau, \l)$ taking values in $\LISU$ 
 such that
 \begin{equation} \label{trafoFtilde}
 \tilde{F}(\tau.z, \overline{\tau.z}, \lambda) = M(\tau, \l)
 \tilde{F}(z, \bar{z}, \lambda) 
\tilde{k}(\tau, z, \bar{z})\quad \mbox{and} \quad 
 M(\tau,\l =1) = \id.
 \end{equation}
 \end{Proposition}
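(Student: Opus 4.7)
The plan is to reduce the proposition to the uniqueness (up to gauge) of the extended frame associated with a given harmonic map into $\mathbb{H}^2 = \mathrm{SU}_{1,1}/\mathrm{U}_1$. Since $\pi_\Rim \circ \tau = \pi_\Rim$ for every deck transformation $\tau$, the lifted normal Gauss map $\tilde g = g \circ \pi_\Rim$ automatically satisfies $\tilde g \circ \tau = \tilde g$. Equivalently, $\tilde f$ is symmetric with respect to $(\tau, \id_\Nil)$ in the sense of the preceding definition, so Theorem \ref{thm:symmetry}(a), applied with vanishing fiber-rotation angle, reproduces $R = \id$ on the Gauss map side. This is the structural input.

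The first substantive step is to observe that $(z, \bar z, \lambda) \mapsto \tilde F(\tau . z, \overline{\tau . z}, \lambda)$ is again an extended frame associated with the same harmonic map $\tilde g$. Since $\tau$ is a biholomorphism of $\widetilde{\Rim}$ and the Maurer--Cartan form $\tilde F^{-1} d \tilde F$ is intrinsically constructed from $\tilde g$ and the conformal structure, the pullback $\tau^{\ast}(\tilde F^{-1} d \tilde F)$ differs from the original at most by a $\lambda$-independent right gauge valued in the isotropy subgroup $\mathrm{U}_1$. By the standard uniqueness of the extended frame --- two extended frames of the same harmonic map differ by left multiplication by a loop in $\LISU$ constant in $(z, \bar z)$ together with a right, $\lambda$-independent, $\mathrm{U}_1$-valued gauge --- we therefore obtain
\[
 \tilde F(\tau . z, \overline{\tau . z}, \lambda) = M(\tau, \lambda)\, \tilde F(z, \bar z, \lambda)\, \tilde k(\tau, z, \bar z),
\]
with $M(\tau, \cdot) \in \LISU$ independent of $(z, \bar z)$ and $\tilde k(\tau, z, \bar z)$ diagonal in $\mathrm{U}_1$. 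A direct computation checks that $M = (\tau^{\ast} \tilde F)\, \tilde k^{-1}\, \tilde F^{-1}$ is $d$-closed, as a consistency verification.

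The final step is the normalization $M(\tau, 1) = \id$. Evaluating the identity at $\lambda = 1$ and letting both sides act on the origin $o \in \mathbb H^2$, and using that $\tilde k \in \mathrm{U}_1$ stabilizes $o$, one gets $\tilde g(\tau . z) = M(\tau, 1) \cdot \tilde g(z)$. Combined with $\tilde g \circ \tau = \tilde g$, this shows $M(\tau, 1)$ fixes the entire image of $\tilde g$; since $\tilde g$ is non-constant for a genuine minimal immersion, $M(\tau, 1) \in \{\pm \id\}$ in $\mathrm{SU}_{1,1}$, and the residual sign is absorbed by replacing $(M, \tilde k)$ with $(\pm M, \pm \tilde k)$.

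The main obstacle I anticipate is the rigidity step identifying the decomposition, namely guaranteeing that the left factor $M$ is genuinely constant in $(z, \bar z)$ rather than picking up $z$-dependence from the conformal substitution $z \mapsto \tau . z$. This requires careful bookkeeping of how the $\lambda$-twist and the right $\mathrm{U}_1$-gauge interact with the change of coordinate under $\tau$, which is standard for the twisted loop group $\LISU$ but warrants care due to the non-compactness of $\mathrm{SU}_{1,1}$ emphasized throughout the paper.
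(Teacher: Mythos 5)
Your argument is correct and is essentially the route the paper itself relies on: the proposition is stated there without a written proof, precisely on the grounds you supply, namely that $\tilde g\circ\tau=\tilde g$ and that $\tilde F\circ\tau$ is another extended frame of the same harmonic map, hence agrees with $\tilde F$ up to a constant left factor in $\LISU$ and a right $\lambda$-independent diagonal $\Uone$-gauge. The only (harmless) divergence is the normalization $M(\tau,\lambda=1)=\id$: the paper's intended source for this is the explicit spinor formula \eqref{eq:extmini}, since $\tilde f\circ\tau=\tilde f$ forces the generating spinors to change under $\tau$ only by a unimodular factor coming from $(\tau'(z))^{-1/2}$, so that at $\lambda=1$ the whole effect of $\tau$ is the right gauge $\tilde k$ and no left factor arises, whereas you deduce $M(\tau,1)=\pm\id$ from its action on the non-constant map $\tilde g$ into $\mathbb{H}^2$ and absorb the sign into $\tilde k$ --- both are legitimate.
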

\begin{proof}
 Since $f$ is symmetric with respect to $(\tau, \id)$, 
 \eqref{eq:transofF} 
 can be rephrased as 
\[
 \tilde{F}(\tau.z, \overline{\tau.z}, \lambda=1) = M(\tau, \l=1)
 \tilde{F}(z, \bar{z}, \lambda=1) 
\tilde{k}(\tau, z, \bar{z})\quad \mbox{and} \quad 
 M(\tau,\l =1) = \id.
\]
 Therefore 
\[
 \tilde{F}(\tau.z, \overline{\tau.z}, \lambda) = M(\tau, \l)
 \tilde{F}(z, \bar{z}, \lambda) 
\tilde{k}(\tau, z, \bar{z}, \l)
\]
 follows, where $M$ and $\tilde{k}$ take values in $\LISU$ and $\Uone$, respectively.
 To show $\tilde k$ is independent of $\l$, 
 look at the Maurer-Cartan form $\tilde \alpha^{\lambda}$ of 
 $\tilde F$. Then the Maurer-Cartan forms of 
 $\tilde F(\tau. z, \overline{\tau. z}, \l)$ and $\tilde F(z, \bar{z}, \l)$ 
 have the same $\l$ distribution. Thus $\tilde{k}$
 is independent of $\lambda$. Therefore \eqref{trafoFtilde} holds.
\end{proof}
 Note that we also use $\tau$ for the induced action of $\tau$ on 
$ \widetilde{\Rim}$ and $ \tilde{k}(\tau, z, \bar{z})$ 
 satisfies the ``crossed-homomorphism''
 property:
\[
 \tilde{k}(\mu \tau, z, \bar{z}) = \tilde{k}(\tau,z,\bar{z}) 
 \tilde{k}(\mu, \tau.z, \overline{\tau.z}).
\]
 Then we have the following theorem.
\begin{Theorem} \label{invframe}
 Every crossed homomorphism $ \tilde{k}(\tau, z, \bar{z})$  
 occurring above is a 
 ``co-boundary'', that is, it can be written in the form 
\[
 \tilde{k}(\tau, z, \bar{z}) = \tilde{k}_0 (z, \bar z) 
 \tilde{k}_0^{-1} ( \tau.z, \overline{\tau. z}),
\] 
 where $\tilde{k}_0$ is a real-analytic $\Uone$-valued function.
 In particular, the frame $ \hat{F} = \tilde{F} \tilde k_0$  
 satisfies  $\hat{F}(\tau.z, \overline{\tau.z}, \lambda) = M(\tau, \l)
 \hat{F}(z, \bar{z}, \lambda) $ for  $\tau \in \pi_1(\Rim)$.
 As a consequence, for every minimal surface in $\Nil$ there exists 
 a frame defined on $\Rim$. 
 More precisely, 
\[
\hat{F}(\tau.z, \overline{\tau.z}, \lambda=1) = M(\tau, \l =1)
 \hat{F}(z, \bar{z}, \lambda =1) 		  
\]  
 for $\tau \in \pi_1(\Rim)$.
 \end{Theorem}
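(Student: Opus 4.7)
The plan is to interpret the desired coboundary representation as a lifting problem for the normal Gauss map, and then reduce it to the triviality of a principal $\mathrm{U}_1$-bundle over $\Rim$.

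First I would rewrite the coboundary identity as an equivariance statement. Setting $\hat{F} := \tilde{F}\tilde{k}_0$ and substituting \eqref{trafoFtilde} gives
\begin{equation*}
\hat{F}(\tau.z, \overline{\tau.z}, \lambda) = M(\tau, \lambda)\tilde{F}(z, \bar{z}, \lambda)\,\tilde{k}(\tau, z, \bar{z})\,\tilde{k}_0(\tau.z, \overline{\tau.z}),
\end{equation*}
so the equivariance $\hat{F}(\tau.z, \overline{\tau.z}, \lambda) = M(\tau, \lambda)\hat{F}(z, \bar{z}, \lambda)$ holds for every $\tau$ and $\lambda$ if and only if $\tilde{k}(\tau, z, \bar{z}) = \tilde{k}_0(z, \bar{z})\,\tilde{k}_0^{-1}(\tau.z, \overline{\tau.z})$. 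Thus constructing $\tilde{k}_0$ with the coboundary property is equivalent to constructing the $\pi_1(\Rim)$-equivariant frame $\hat{F}$.

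Second, I would specialize to $\lambda = 1$, where $M(\tau, 1) = \id$. Here the equivariance condition says exactly that $\hat{F}(\cdot, 1)$ is $\pi_1(\Rim)$-invariant, hence descends to a real-analytic map $F_0 : \Rim \rightarrow \mathrm{SU}_{1,1}$ which covers the normal Gauss map $g : \Rim \rightarrow \mathbb{H}^2 = \mathrm{SU}_{1,1}/\mathrm{U}_1$. Conversely, given any such global lift $F_0$, defining $\tilde{k}_0(z, \bar{z}) := \tilde{F}(z, \bar{z}, 1)^{-1}\bigl(F_0 \circ \pi_\Rim\bigr)(z)$ produces a real-analytic $\mathrm{U}_1$-valued function, since both factors cover the same point of $\mathbb{H}^2$, and an immediate calculation using \eqref{trafoFtilde} at $\lambda = 1$ shows it satisfies the coboundary identity. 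The task is therefore reduced to finding a real-analytic section $F_0$ of the pullback principal $\mathrm{U}_1$-bundle $g^{*}(\mathrm{SU}_{1,1} \rightarrow \mathbb{H}^2)$ over $\Rim$.

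The key step, and main obstacle, is to establish the triviality of this pullback bundle. Since $\Rim$ is a connected non-compact Riemann surface, it has the homotopy type of a one-dimensional CW complex, so $H^{2}(\Rim, \mathbb{Z}) = 0$ and every topological principal $\mathrm{U}_1$-bundle on $\Rim$ is trivial. The promotion to real-analytic triviality is ensured by the Oka--Grauert principle together with the Behnke--Stein theorem that non-compact Riemann surfaces are Stein. A real-analytic global section of $g^{*}\mathrm{SU}_{1,1}$ then exists, yielding $F_0$ and hence $\tilde{k}_0$. Once $\tilde{k}_0$ is in hand, the equivariance $\hat{F}(\tau.z, \overline{\tau.z}, \lambda) = M(\tau, \lambda)\hat{F}(z, \bar{z}, \lambda)$ holds for all $\lambda$ by the computation in the first step, and the displayed consequence at $\lambda = 1$ follows at once. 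An alternative, more hands-on route, bypassing Oka--Grauert, would fix a real-analytic logarithm of $\tilde{k}$ on a fundamental domain and use the freeness of $\pi_1(\Rim)$ to extend consistently; the bundle-theoretic argument above makes transparent why such an extension always succeeds.
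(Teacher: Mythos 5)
Your proof is correct, but it takes a genuinely different route from the paper's. The paper obtains the coboundary as a byproduct of the invariant \emph{holomorphic} frame: it first constructs $C$ with $C(\tau.z,\lambda)=M(\tau,\lambda)C(z,\lambda)$ (Theorem \ref{invholframe}, via the vanishing of $H^1$ of the sheaf of holomorphic maps into the Banach Lie group $\LSL$ over a non-compact Riemann surface, following R\"ohrl and Bungart), writes $\tilde F=CL_+$ with $L_+$ real-analytic and plus-valued, and then compares the two transformation laws to get $L_+(\tau.z,\overline{\tau.z},\lambda)=L_+(z,\bar z,\lambda)\tilde k(\tau,z,\bar z)$, so that $\tilde k_0^{-1}$ is read off as (the unitary part of) the leading term of $L_+$. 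You instead work entirely at $\lambda=1$, where $M(\tau,\lambda=1)=\id$ forces $\tilde F(\cdot,\cdot,1)$ to descend modulo $\Uone$, and you reduce the claim to finding a real-analytic section of the pullback under $g$ of the principal $\Uone$-bundle $\mathrm{SU}_{1,1}\to\mathbb{H}^2$; the $\lambda$-independence of $\tilde k$ and $\tilde k_0$ then propagates the equivariance to all $\lambda$. This decouples Theorem \ref{invframe} from Theorem \ref{invholframe} and isolates the underlying topological fact ($H^2(\Rim,\mathbb{Z})=0$), which is attractive. Two remarks. First, your citation is slightly misplaced: the bundle $g^{*}\mathrm{SU}_{1,1}$ is only real-analytic (the normal Gauss map is not holomorphic) and $\Uone$ is not a complex Lie group, so the Stein-manifold Oka--Grauert principle does not apply directly; what you need is Grauert's real-analytic Oka principle (topological and real-analytic classifications of bundles over real-analytic manifolds agree). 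Second, the cohomological machinery is actually unnecessary here: the bundle $\mathrm{SU}_{1,1}\to\mathbb{H}^2$ is already globally real-analytically trivial, since by the polar decomposition every coset contains a unique boost, giving a global real-analytic section $s:\mathbb{H}^2\to\mathrm{SU}_{1,1}$; then $F_0=s\circ g$ is the desired lift and $\tilde k_0=\tilde F(\cdot,\cdot,1)^{-1}(F_0\circ\pi_\Rim)$ is automatically diagonal, unitary and real-analytic, so your argument closes without any appeal to bundle classification.
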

\begin{Remark}
 It is important to distinguish our extended frame built 
 from the $\psi_j$'s in \eqref{special frame} from the above ``invariant frame''.
\end{Remark}
 Before giving the proof we recall:
 Following the discussion for other surface classes, 
 like CMC surfaces in $\R^3,$ one will  construct
 an invariant potential. For this one usually needs to do two steps.
 The first step follows the Appendix of \cite{DPW}:
\begin{Theorem}[Lemma 4.11 in \cite{DPW}]
 If $\Rim$ is non-compact, then there exists some $($real analytic$)$ matrix function
 $\tilde V_+: \widetilde{ \Rim} \rightarrow \LSLP$ such that the matrix
 $\tilde C$ defined by
\[
 \tilde C(z,\lambda) := \tilde{F}(z, \bar{z},\lambda) \tilde{V}_+(z, \bar{z}, \lambda)
\] 
 is holomorphic in $z \in \widetilde{\Rim}$ and $\lambda \in \C^*$.
 \end{Theorem}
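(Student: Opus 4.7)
The plan is to construct $\tilde V_+$ explicitly by solving the PDE that arranges $\tilde C := \tilde F \tilde V_+$ to be $\partial_{\bar z}$-closed while inheriting $\lambda$-holomorphy on $\C^\ast$ from $\tilde F$. Decomposing $\tilde F^{-1} d\tilde F = \tilde A\, dz + \tilde B\, d\bar z$, the condition $\partial_{\bar z}\tilde C = 0$ becomes
\begin{equation*}
\partial_{\bar z}\tilde V_+ + \tilde B\, \tilde V_+ = 0.
\end{equation*}
For the extended frame of a harmonic map into $\Ha^2 = \U/\Uone$, the $d\bar z$-part has Fourier shape $\tilde B = \beta_0 + \lambda\beta_1$, with $\beta_0$ in the isotropy subalgebra (diagonal) and $\beta_1$ off-diagonal, so $\tilde B$ takes values in the positive twisted loop algebra $\Lambda^+\mathfrak{sl}_2\mathbb C_\sigma$. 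The target is therefore a $\tilde V_+ = \sum_{n\geq 0} v_n(z,\bar z)\lambda^n$ lying in $\LSLP$ and normalized by $\tilde V_+(z_0) = \id$ at a chosen base point.

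Matching Fourier coefficients turns the PDE into a triangular hierarchy of inhomogeneous $\bar\partial$-equations
\begin{equation*}
\partial_{\bar z} v_k + \sum_{j=0}^{k} \beta_j\, v_{k-j} = 0, \qquad k = 0, 1, 2, \ldots,
\end{equation*}
in which each $v_k$ is governed, once $v_0,\ldots,v_{k-1}$ have been determined, by a single linear first-order equation with a known right-hand side. I would solve these one-by-one on $\widetilde{\Rim}$ as follows. By the hypothesis recalled in the introduction, $\widetilde{\Rim}$ is either $\D$ or $\C$ and is therefore a non-compact, contractible Stein Riemann surface. The $k=0$ problem $\partial_{\bar z}\log v_0 = -\beta_0$ is a scalar $\bar\partial$-equation and admits a global real-analytic solution by the Dolbeault--Grothendieck lemma (equivalently, the Cauchy--Pompeiu formula) on a non-compact Riemann surface. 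Using this $v_0$ as an integrating factor (i.e.\ writing $v_k = v_0 u_k$), each higher equation reduces to a pure $\bar\partial$-problem $\partial_{\bar z} u_k = \text{(known)}$ that is again globally solvable on the Stein surface $\widetilde{\Rim}$. The twisting convention automatically places each $v_k$ in the correct diagonal-versus-off-diagonal block, so $\tilde V_+ \in \LSLP$ coefficient by coefficient.

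The main obstacle is convergence of the resulting formal series $\sum v_n\lambda^n$. To produce a genuine $\LSLP$-valued real-analytic map, rather than just a formal object, one must control the $v_k$ in a suitable Banach norm, uniformly in $\lambda$ on compact subsets of $\C^\ast$, so that the series defines a holomorphic map into an appropriate Banach completion of $\LSLP$. My approach would be to run the induction with explicit bounds from the Cauchy--Pompeiu kernel on an exhausting sequence of relatively compact subdomains of $\widetilde{\Rim}$, exploiting that $\beta_0$ and $\beta_1$ are real-analytic (hence have uniformly bounded derivatives on each compact), and then closing the argument with a majorant-series estimate controlling $\|v_k\|$ as $k\to\infty$. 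Once convergence is established uniformly on compacts of $\widetilde{\Rim}\times\C^\ast$, the resulting $\tilde V_+$ is real-analytic on $\widetilde{\Rim}$, holomorphic in $\lambda$ on $\C^\ast$, and $\LSLP$-valued, and $\tilde C = \tilde F\tilde V_+$ automatically satisfies all the claimed properties.
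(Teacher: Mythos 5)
The paper does not prove this statement; it is quoted verbatim from the Appendix of \cite{DPW}, where the $\bar{\partial}$-problem $\partial_{\bar z}\tilde V_+ = -\tilde B\,\tilde V_+$ is solved \emph{directly in the Banach Lie group} $\LSLP$ (local solvability plus a vanishing theorem for $H^1$ with coefficients in the sheaf of maps into a Banach Lie group over a non-compact Riemann surface -- the same R\"ohrl--Bungart machinery this paper later invokes for Theorem \ref{invholframe}). Your structural observations are correct: $\tilde B=\beta_0+\lambda\beta_1$ does lie in $\Lambda^+\mathfrak{sl}_2\C_\sigma$ because of the form of $V^{\lambda}$ in \eqref{eq:U-V1lambda}, the recursion respects the twisting, and each equation of the hierarchy is individually solvable on $\widetilde{\Rim}$.

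The genuine gap is the convergence step, which is the heart of the matter and which you explicitly defer. Solving coefficient-by-coefficient produces only a \emph{formal} power series $\sum_{n\ge 0} v_n\lambda^n$; to conclude you must show it converges to an element of $\LSLP$, i.e.\ on the closed unit $\lambda$-disk (indeed in the Banach topology on the boundary circle used to define the loop group). The bounds you propose do not deliver this: on a relatively compact subdomain the Cauchy--Pompeiu operator is bounded with some norm $C$ depending on the domain and on $\beta_1$, $v_0$, so the recursion $\partial_{\bar z}u_k=-v_0^{-1}\beta_1 v_{k-1}$ gives at best $\|v_k\|\le C^k$, i.e.\ a radius of convergence $\ge 1/C$ with no reason to have $C\le 1$. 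A majorant argument forcing radius of convergence $\ge 1$ would be a nontrivial additional result, not a routine estimate, and nothing in the data ($\beta_0,\beta_1$ are merely real analytic) makes it plausible. There is a second, related gap: even granting convergence on each member of an exhaustion, passing to all of $\widetilde{\Rim}$ requires a Runge-type approximation/patching argument for Banach-Lie-group-valued solutions, which is exactly the cohomological input ($H^1=0$ for the relevant sheaf) that the DPW proof supplies and that your sketch omits. As written, the argument establishes the existence of a formal gauge but not of the real analytic $\LSLP$-valued map $\tilde V_+$ asserted in the statement.
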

 Now $\tilde C$ inherits from its construction and 
 from $\tilde{F}$ the transformation behaviour
 \begin{equation}
  \tilde C(\tau.z, \lambda) = M(\tau, \lambda) \tilde C(z,\lambda) W_+(\tau, z, \lambda),
 \end{equation}
 where $\tau \in  \pi_1(\Rim)$  and  $W_+:\widetilde{ \Rim} \rightarrow \LSLP$ 
 is holomorphic in $z$ and $\lambda$. 
 The second step is to prove the existence of an invariant potential.
 \begin{Theorem} \label{invholframe}
  The matrix function $W_+ $ is a crossed homomorphism, that is, the identity
 \begin{equation*}
 W_+(\tau \mu,z,\lambda) = W_+(\tau, \mu.z,\lambda) W_+(\mu,z,\lambda) 
 \end{equation*}
  holds for all $\tau, \mu \in \pi_1(\Rim)$. 
 Moreover, there exists some holomorphic matrix function $P_+ : \widetilde{\Rim} \rightarrow \LSLP$ such that
\[
  W_+(\tau, z, \lambda) = P_+(z,\lambda)  P_+(\tau.z,\lambda)^{-1}. 
\] 
In particular, $C = \tilde C P_+$ satisfies
\[
 C(\tau.z,\lambda) = M(\tau, \lambda) C(z, \lambda)
\] 
  for all $\tau \in \pi_1(\Rim)$ and all 
 $\lambda \in \C^*$.
 \end{Theorem}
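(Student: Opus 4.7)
The plan is to (i) verify the crossed homomorphism identity for $W_+$, (ii) trivialize $W_+$ as a holomorphic coboundary using the non-compactness of $\Rim$, and (iii) read off the invariance of $C=\tilde C\,P_+$. Only step (ii) carries real content; the other two are bookkeeping.

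For the cocycle identity, I would compute $\tilde C((\mu\tau).z,\lambda)$ in two ways. The transformation rule applied to $\mu\tau$ directly gives $M(\mu\tau,\lambda)\,\tilde C(z,\lambda)\,W_+(\mu\tau,z,\lambda)$, while writing $(\mu\tau).z=\mu.(\tau.z)$ and applying the rule twice yields
\[
M(\mu,\lambda)\,M(\tau,\lambda)\,\tilde C(z,\lambda)\,W_+(\tau,z,\lambda)\,W_+(\mu,\tau.z,\lambda).
\]
The map $\tau\mapsto M(\tau,\lambda)$ is a homomorphism $\pi_1(\Rim)\to\LISU$ (a consequence of the crossed homomorphism property of $\tilde k$ in Proposition \ref{prp:transformation}), so after cancelling $M(\mu\tau,\lambda)=M(\mu,\lambda)M(\tau,\lambda)$ and $\tilde C(z,\lambda)$ one obtains $W_+(\mu\tau,z,\lambda)=W_+(\tau,z,\lambda)\,W_+(\mu,\tau.z,\lambda)$.

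For the construction of $P_+$, the cocycle $W_+$ is holomorphic in $(z,\lambda)\in\widetilde{\Rim}\times\C^*$ with values in the complex loop group $\LSLP$, so it represents a class in the non-abelian cohomology $H^1(\pi_1(\Rim),\mathcal O(\widetilde{\Rim},\LSLP))$. Equivalently, $W_+$ defines a holomorphic principal $\LSLP$-bundle on the quotient Riemann surface $\Rim$. The plan is to invoke the loop-group analogue of Grauert's theorem: since $\Rim$ is a non-compact Riemann surface it is Stein, and holomorphic principal bundles over Stein spaces with structure group a complex Banach--Lie group, in particular $\LSLP$, are holomorphically trivial. This is the mechanism used in the appendix of \cite{DPW} and developed in detail in \cite{Kellersch}. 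Pulling back a global holomorphic trivialization to $\widetilde{\Rim}$ produces the desired holomorphic $P_+:\widetilde{\Rim}\to\LSLP$ satisfying $W_+(\tau,z,\lambda)=P_+(z,\lambda)\,P_+(\tau.z,\lambda)^{-1}$.

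With $P_+$ in hand the final assertion is immediate: setting $C=\tilde C\,P_+$,
\[
C(\tau.z,\lambda)=\tilde C(\tau.z,\lambda)\,P_+(\tau.z,\lambda)=M(\tau,\lambda)\,\tilde C(z,\lambda)\,W_+(\tau,z,\lambda)\,P_+(\tau.z,\lambda)=M(\tau,\lambda)\,C(z,\lambda),
\]
and holomorphicity of $C$ in $(z,\lambda)$ is inherited from that of $\tilde C$ and $P_+$. The main obstacle is the trivialization step: it asserts the vanishing of a non-abelian first cohomology set with values in the infinite-dimensional complex Lie group $\LSLP$, and relies essentially both on the Stein (non-compact) nature of $\Rim$ and on the extension of the Oka--Grauert principle to holomorphic $\LSLP$-valued cocycles.
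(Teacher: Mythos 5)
Your proposal follows essentially the same route as the paper: the cocycle identity for $W_+$ is the standard two-ways computation, and the trivialization of $W_+$ rests on the Oka--Grauert principle for cocycles with values in the complex Banach Lie group $\LSLP$ over a non-compact Riemann surface (the paper cites R\"ohrl for the finite-dimensional template, Bungart for the isomorphism between continuous and holomorphic cohomology in the Banach setting, and Forster for the final coboundary construction). One caveat: your key lemma as stated --- that holomorphic principal bundles over a Stein space with complex Banach--Lie structure group are holomorphically trivial --- is too strong. Oka--Grauert only identifies the holomorphic classification with the topological one, and there exist Stein manifolds carrying topologically, hence holomorphically, nontrivial bundles (already line bundles when $H^2(X,\Z)\neq 0$). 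What makes the argument work here is that $\Rim$ is a non-compact Riemann surface, so $H_2(\Rim,\Z)=0$ and the obstruction to a continuous trivialization vanishes; this topological input is exactly step (a) of the paper's proof and should be stated explicitly rather than absorbed into the word ``Stein''. With that addition your argument matches the paper's.
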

\begin{proof}
Following the proof of Theorem 3.2 of \cite{DH3} or the proof of Theorem 31.2 of \cite{Forster} and using Theorem 8.2 in \cite{Bungart} which implies the vanishing of 
$\mathrm{H}^1(\mathbb{D}, \Lambda^+{\rm SL}_2 \mathbb{C}_{\sigma})$, 
one obtains that the cocycle $W_+(\gamma,z,\lambda)$ splits in 
$\Lambda^+{\rm SL}_2 \mathbb{C}_{\sigma}$.
\end{proof}
 From Theorem \ref{invholframe} we immediately have the following 
 Corollary.
 \begin{Corollary}\label{Cor:invpot}
 The differential one-form $ \eta = C^{-1} dC$ 
is invariant under $\pi_1(\Rim)$ and is called an {\rm invariant 
 holomorphic potential}.
  In particular, each minimal surface of $\Nil$ can be constructed 
 from some invariant holomorphic potential.
 \end{Corollary}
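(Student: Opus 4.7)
The plan is to use the transformation law produced by Theorem \ref{invholframe} as the sole input, and then observe that invariance of $\eta = C^{-1}dC$ under $\pi_1(\Rim)$ is essentially a one-line computation made possible by the fact that $M(\tau,\lambda)$ is independent of $z$. Concretely, for any $\tau \in \pi_1(\Rim)$, I would pull back $\eta$ by $\tau$ and substitute $C(\tau.z,\lambda) = M(\tau,\lambda)\, C(z,\lambda)$. Since $M(\tau,\lambda)$ does not depend on $z$, the differential $d$ (taken with respect to $z$, $\bar z$) annihilates it, so $d(MC) = M\, dC$. Hence
\begin{equation*}
\tau^{*}\eta \;=\; C(\tau.z,\lambda)^{-1} d C(\tau.z,\lambda) \;=\; (M C)^{-1} M\, dC \;=\; C^{-1} M^{-1} M\, dC \;=\; C^{-1} dC \;=\; \eta.
\end{equation*}
Since this holds for every $\tau \in \pi_1(\Rim)$, the one-form $\eta$ descends to a globally defined one-form on $\Rim$. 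The potential $\eta$ is holomorphic in $z \in \Rim$ and in $\lambda \in \C^{*}$ because $C$ is holomorphic by Theorem \ref{invholframe}, and it takes values in $\lsl$ by construction.

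For the second assertion, I would appeal to the generalized Weierstrass type representation recalled in the Appendix: starting from the invariant holomorphic potential $\eta$ on $\Rim$, pull it back to the universal cover $\widetilde{\Rim}$, integrate $d\tilde C = \tilde C \eta$ with $\tilde C(z_0,\lambda) = \id$ to obtain $\tilde C$, perform the Iwasawa splitting $\tilde C = \tilde F \tilde V_{+}$ and apply the Sym formula $\Xi_{\mathrm{nil}}$ to $\tilde F$ to produce a minimal surface into $\Nil$. Up to the coboundary adjustment of Theorem \ref{invframe} (which does not affect the underlying surface), this reproduces the original $f$. Since $\eta$ is by construction the potential associated to $C$, and $C$ was the holomorphic extension of the frame of $f$, the resulting minimal surface coincides with $f$ up to an ambient isometry.

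The whole argument is essentially formal once Theorem \ref{invholframe} is in hand; the only substantive point to verify carefully is that $M(\tau,\lambda)$ is genuinely $z$-independent, which is the very content of the transformation law produced by Theorems \ref{invframe} and \ref{invholframe}. Thus there is no real obstacle beyond bookkeeping; all the analytic difficulty (non-compactness of $\widetilde{\Rim}$, vanishing of the relevant sheaf cohomology, existence of the holomorphic splitting $P_{+}$) has already been absorbed into Theorem \ref{invholframe}.
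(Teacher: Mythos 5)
Your computation $\tau^{*}\eta = (MC)^{-1}d(MC) = C^{-1}dC = \eta$, using that $M(\tau,\lambda)$ is $z$-independent, is exactly the "immediate" consequence of Theorem \ref{invholframe} that the paper has in mind (the paper gives no further argument), and your appeal to the generalized Weierstrass representation for the second assertion matches the intended reading. The proposal is correct and follows the paper's approach.
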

\begin{proof}[Proof of {\rm Theorem \ref{invframe}}]
Let $\tilde{F}$ be as in Proposition \ref{prp:transformation} and  
 $C$ as in Theorem \ref{invholframe}. Then $\tilde{F} = C P_+^{-1}
 \tilde{V}_+^{-1} = C L_+$. Here $L_+$ is real analytic.
From the equation (\ref{trafoFtilde}) we now obtain
\[
 C(\tau.z, \lambda)L_+(\tau.z, \overline{\tau.z}, \lambda) = M(\tau, \l)
 C(z, \lambda) L_+(z, \bar{z}, \lambda) 
\tilde{k}(\tau, z, \bar{z}).
\]
Since $C(\tau.z,\lambda) = M(\tau, \lambda) C(z, \lambda)$ 
 this equation yields the equation
 \[
  L_+(\tau.z, \overline{\tau.z}, \lambda) = L_+(z, \bar{z}, \lambda) \tilde{k}(\tau, z, \bar{z})
 \]
 and this implies 
$\tilde k_0^{-1}(\tau.z, \overline{\tau.z}) = \tilde k_0^{-1}(z, \bar{z}) \tilde{k}(\tau, z, \bar{z})$, where $\tilde k_0^{-1}$ denotes the leading term of $L_+$, that is,
 the expansion of $L_+$ with respect to $\lambda$ is given by 
  $L_+ = \tilde k_0^{-1} + \lambda 
 L_{+1} + \cdots$.
  Note that in this equation we can assume without loss of generality
 that $\tilde k_0$ is unitary, and the claim follows. 
\end{proof}
  \subsection{From invariant potentials to surfaces}
 In this subsection we start from some Riemann surface $\Rim$ and consider  
 a holomorphic potential $\eta$ which is defined on the  simply-connected cover 
 $\widetilde{\Rim}$ 
 of $\Rim$ and is invariant under the fundamental group $\pi_1 (\Rim)$
 as in Corollary \ref{Cor:invpot}.
  Reversing the construction discussed above (which lead from 
 an immersion to an invariant potential), we first solve  the ODE
\[
 dC = C \eta,
\]
 with $C(z_0, \l) \in \LSL$ for some base point $z_0 \in 
 \widetilde \Rim$.
 It is easy to see that any such $C$ satisfies
\[
 C(\tau.z, \lambda) = \rho(\tau, \lambda) C(z,\lambda) 
\]
 for all $\tau \in \pi_1(\Rim)$ and  where 
 $\rho(-,\lambda): \pi_1(\Rim) \rightarrow \LSL$ is a homomorphism.
 From the discussion of the previous subsection we know that the 
 \emph{monodromy 
 matrix}  $\rho(\tau, \lambda)$ needs to be contained in $\LISU$. 
 We therefore need to consider two cases:

 { \bf The monodromy case 1:} The matrix 
 $\rho(\tau, \lambda)$ is contained in $\LISU$ for all $\tau \in \pi_1(\Rim)$. 
 This case will be discussed in Section \ref{subsection:mono1}.
  
 {\bf The monodromy case 2:} The matrix $\rho(\tau, \lambda) $ is  not contained in
  $\LISU$ for all $\tau \in \pi_1(\Rim)$, 
 but one can associate with $C$ another monodromy matrix 
 which is contained in $\LISU$. 
  This case will be discussed in Section \ref{subsection:mono2}.
\subsection{The monodromy case 1}\label{subsection:mono1}
 We want to retrieve the relation between $C$ and $F$.
For this purpose, we  quote \cite{Kellersch} (see also \cite[Theorem 2.1]{BRS:Min}):
\begin{Theorem}[Iwasawa decomposition]
 There is an open and dense subset $\mathcal{I} = \mathcal{I}_e \cup  \mathcal{I}_\omega $ of $\widetilde{\Rim}$  such that 
\[
 C(z, \lambda) \in  \LISU \cdot \LSLP
\] 
 if $z \in \mathcal{I}_e$,  and 
\[
 C(z, \lambda) \in  \LISU \cdot \omega_0 \cdot \LSLP 
\]
 if $z \in \mathcal{I}_\omega,$
 where  $\omega_0 = \left(\begin{smallmatrix}
			 0 & \l \\ -\l^{-1}& 0 
			\end{smallmatrix}\right).$ 
\end{Theorem}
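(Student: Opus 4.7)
The plan is to work abstractly in the complex Banach Lie group $\LSL$, establish the claimed cell decomposition there, and then pull it back along the holomorphic map $C:\widetilde{\Rim}\to\LSL$ provided by Theorem \ref{invholframe}. Setting
\[
\mathcal{U}_e = \LISU \cdot \LSLP \qquad \text{and} \qquad \mathcal{U}_\omega = \LISU \cdot \omega_0 \cdot \LSLP,
\]
and $\mathcal{S} = \LSL \setminus (\mathcal{U}_e \cup \mathcal{U}_\omega)$, the claim reduces to two assertions at the level of the loop group: both $\mathcal{U}_e$ and $\mathcal{U}_\omega$ are open subsets of $\LSL$, and $\mathcal{S}$ is a proper complex-analytic subvariety.

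First I would prove openness by an infinitesimal Iwasawa splitting. A direct Fourier-mode calculation using the twisted $\sigma$-grading, together with $\isu + \mathfrak{b}_+ = \sl$ for the upper-triangular Borel $\mathfrak{b}_+$, yields $\lsl = \Lisu + \Lie(\LSLP)$ as a sum of closed Banach subspaces. The implicit function theorem in Banach Lie groups then shows that the product map $\mu_e:\LISU\times\LSLP\to\LSL$ is a submersion at $(e,e)$, so $\mathcal{U}_e$ contains an open neighborhood of $\id$; left-translating by elements of $\LISU$ and right-translating by $\LSLP$ propagates openness everywhere on $\mathcal{U}_e$. The same argument at $\omega_0$, with $\LSLP$ replaced by $\omega_0 \LSLP \omega_0^{-1}$, yields openness of $\mathcal{U}_\omega$.

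The hard part is showing that $\mathcal{S}$ is a proper analytic subvariety. The strategy is to exploit the Birkhoff decomposition of $\LSL$ into double cosets $\LSLMI \cdot \omega_n \cdot \LSLP$ indexed by $n\in\Z$ with $\omega_n=\di(\l^n,\l^{-n})$ (in the twisted sense), whose big cell $n=0$ is open and dense. A careful orbit analysis, as developed in \cite{Kellersch}, shows that the Iwasawa orbit $\LISU\cdot g\cdot\LSLP$ is open precisely when the associated Hermitian pairing on the top Birkhoff factor is definite or indefinite, giving exactly the two open cells $\mathcal{U}_e$ and $\mathcal{U}_\omega$; the remaining orbits, corresponding to the degenerate (null) case, are contained in the union of Birkhoff strata of positive codimension and hence form the analytic set $\mathcal{S}$. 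This dichotomy is the genuinely new phenomenon caused by the non-compactness of $\mathrm{SU}_{1,1}$: in the compact $\SU$ setting the second cell collapses into the first and the Iwasawa decomposition is global, whereas for the indefinite signature one is forced to add the $\omega_0$-translate.

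Finally I would pull back to $\widetilde{\Rim}$ by setting $\mathcal{I}_e:=C^{-1}(\mathcal{U}_e)$ and $\mathcal{I}_\omega:=C^{-1}(\mathcal{U}_\omega)$, both open by continuity of $C$. The preimage $C^{-1}(\mathcal{S})$ is an analytic subset of $\widetilde{\Rim}$; choosing the base point $z_0$ so that $C(z_0)=\id\in\mathcal{U}_e$ guarantees $C^{-1}(\mathcal{S})\neq\widetilde{\Rim}$, so on the connected Riemann surface $\widetilde{\Rim}$ this preimage is discrete and $\mathcal{I}=\mathcal{I}_e\cup\mathcal{I}_\omega$ is open and dense, as required.
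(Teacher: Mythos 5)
First, note that the paper does not prove this statement at all: it is quoted verbatim from \cite{Kellersch} (see also \cite[Theorem 2.1]{BRS:Min}), so you are attempting more than the authors do. Your overall architecture (establish the two-cell decomposition at the level of the Banach Lie group $\LSL$, then pull back along the holomorphic map $C$) matches how the cited references are used, and your openness argument -- the splitting $\lsl = \Lisu + \Lie(\LSLP)$, the implicit function theorem, and translation by the two subgroups, repeated at $\omega_0$ -- is correct and standard.

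The density step, however, contains a genuine error. The complement $\mathcal{S}$ of the two open Iwasawa cells is \emph{not} contained in the union of the positive-codimension Birkhoff strata, and it is not a complex-analytic subvariety. Already in the constant-loop model one sees the problem: $g=\left(\begin{smallmatrix} a & b\\ c& d\end{smallmatrix}\right)\in{\rm SL}_2\mathbb C$ lies in $\U\cdot B_+$ iff $|a|^2-|c|^2>0$, in $\U\cdot \omega\cdot B_+$ iff $|a|^2-|c|^2<0$, and the degenerate locus $|a|^2=|c|^2$ (with $a\neq 0$) meets the Birkhoff big cell and has \emph{real} codimension one. The boundary between the two Iwasawa cells is cut out by the vanishing of a real-valued real-analytic function (the sign of the Hermitian quantity you mention, or equivalently the quantity $l_0^2$ appearing in Appendix \ref{sc:Meroextension}, which is positive on $\mathcal{I}_e$ and negative on $\mathcal{I}_\omega$), not by a holomorphic equation. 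Consequently $C^{-1}(\mathcal{S})$ is a real-analytic subset of $\widetilde{\Rim}$ that is generically a union of real curves, \emph{not} a discrete set; the paper itself relies on this later, when it decomposes $f(\D)$ into $f(\mathcal{I}_e)$, $f(\mathcal{I}_\omega)$ and $f(\mathcal{B})$ with $\mathcal{B}$ the boundary between the cells. Your conclusion (open and dense) is still true, but it must be obtained from the weaker fact that a proper real-analytic zero set in a connected manifold has empty interior (non-properness being excluded by $C(z_0)=\id\in\mathcal{U}_e$), not from discreteness of a complex-analytic preimage.
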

 The open dense subset $\mathcal{I}$ will
 be called the \textit{Iwasawa core}. 
 It consists of two connected open cells, called \textit{Iwasawa cells}.
 The next step in our construction procedure will 
 be an Iwasawa decomposition of $C$.
 We distinguish the two cases listed in the theorem above.

 \begin{Theorem}\label{thm:invariantcase1}
 Let $\eta$ be an invariant potential on $\widetilde \Rim$ 
 and $C$ a solution to $dC = C \eta$.
 Assume that the monodromy representation $\rho$ of $C$ 
 relative to $\pi_1(\Rim)$ takes value in  $\LISU$. 
 For $z \in \mathcal{I}_e$, take the (unique) Iwasawa 
 decomposition
\begin{equation} \label{IwasawaIe}
C(z, \lambda) = F(z, \bar z, \lambda) V_+(z, \bar z, \lambda),
\end{equation}
 where the diagonal entries of $V_{+0}$
 for the expansion 
 $V_+ = V_{+0} + \lambda V_{+1} + \lambda^2 V_{+2} \cdots$ are
 assumed to be 
 positive.
 Then
\begin{enumerate} 
 \item  For each symmetry $(\tau,\rho(\tau, \lambda))$ of $C$
  the automorphism $\tau \in \pi_1 (\Rim)$
 leaves $\mathcal{I}_e$  and  $\mathcal{I}_\omega$ invariant and acts bi-holomorphically there. 
 \item  $F(\tau.z, \overline{\tau.z}, \lambda) = \rho(\tau, \lambda) F(z,\bar z,  \lambda)$ for all $z \in \mathcal{I}_e$.
\end{enumerate}
 \end{Theorem}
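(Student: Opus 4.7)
The plan is to deduce both statements from the monodromy relation $C(\tau.z, \lambda) = \rho(\tau,\lambda) C(z,\lambda)$ together with the uniqueness of the Iwasawa decomposition on each cell. First I would record that since $\eta$ descends to $\mathcal{R}$, any solution of $dC = C\eta$ on $\widetilde{\mathcal{R}}$ transforms under a deck transformation $\tau \in \pi_1(\mathcal{R})$ as $C(\tau.z,\lambda) = \rho(\tau,\lambda)\, C(z,\lambda)$ for a $\lambda$-dependent cocycle $\rho(\tau,\lambda)$, which by hypothesis lies entirely in $\Lambda \mathrm{SU}_{1,1\sigma}$.

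For part (1), I would observe that the two Iwasawa cells are defined by membership in the disjoint double cosets $\Lambda\mathrm{SU}_{1,1\sigma}\cdot\Lambda^+\mathrm{SL}_{2}\mathbb{C}_{\sigma}$ and $\Lambda\mathrm{SU}_{1,1\sigma}\cdot\omega_{0}\cdot\Lambda^+\mathrm{SL}_{2}\mathbb{C}_{\sigma}$, respectively. Since these double cosets are, by construction, closed under left multiplication by elements of $\Lambda\mathrm{SU}_{1,1\sigma}$, the relation $C(\tau.z,\lambda) = \rho(\tau,\lambda)\,C(z,\lambda)$ with $\rho(\tau,\lambda) \in \Lambda\mathrm{SU}_{1,1\sigma}$ immediately implies $\tau.z \in \mathcal{I}_e$ whenever $z \in \mathcal{I}_e$, and similarly for $\mathcal{I}_\omega$. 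Applying the same argument to $\tau^{-1}$ (whose cocycle is $\rho(\tau,\lambda)^{-1}$, still in $\Lambda \mathrm{SU}_{1,1\sigma}$) gives the reverse inclusion, hence genuine invariance. The bi-holomorphicity is automatic: every deck transformation of the universal cover is a bi-holomorphism, and the restriction to an invariant open subset remains one.

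For part (2), I would plug the unique normalized Iwasawa splitting (\ref{IwasawaIe}) into the monodromy relation at the point $\tau.z \in \mathcal{I}_e$ to obtain
\begin{equation*}
F(\tau.z,\overline{\tau.z},\lambda)\, V_+(\tau.z,\overline{\tau.z},\lambda)
= \bigl[\rho(\tau,\lambda)\, F(z,\bar z,\lambda)\bigr]\, V_+(z,\bar z,\lambda).
\end{equation*}
The bracketed factor on the right is an element of $\Lambda\mathrm{SU}_{1,1\sigma}$, and $V_+(z,\bar z,\lambda)$ is an element of $\Lambda^+\mathrm{SL}_{2}\mathbb{C}_\sigma$ whose leading diagonal term is positive. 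Thus the right-hand side is of the same normalized Iwasawa form as the left-hand side. Uniqueness of the normalized splitting on $\mathcal{I}_e$ then forces $F(\tau.z,\overline{\tau.z},\lambda) = \rho(\tau,\lambda) F(z,\bar z,\lambda)$ and $V_+(\tau.z,\overline{\tau.z},\lambda) = V_+(z,\bar z,\lambda)$ separately.

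The one step that requires genuine care, rather than routine bookkeeping, is the uniqueness assertion for the Iwasawa splitting in the non-compact group $\mathrm{SU}_{1,1}$: unlike the compact $\mathrm{SU}_2$ case used for CMC surfaces in $\mathbb{R}^3$, the decomposition here is only unique on each connected cell and only after fixing the normalization on the diagonal of $V_+$. Provided this uniqueness on $\mathcal{I}_e$ is invoked as stated in the Iwasawa decomposition theorem cited from \cite{Kellersch, BRS:Min}, the rest of the argument is purely formal manipulation of the splitting together with the group-theoretic invariance of the double cosets under left multiplication by $\Lambda\mathrm{SU}_{1,1\sigma}$.
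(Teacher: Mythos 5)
Your proposal is correct and follows essentially the paper's own argument: both parts rest on the monodromy relation $C(\tau.z,\lambda)=\rho(\tau,\lambda)C(z,\lambda)$ combined with the uniqueness of the normalized Iwasawa splitting on $\mathcal{I}_e$ (the paper phrases part (2) as showing that the unitary cocycle $k$ equals the positive-real leading term of $V_+(z,\bar z,\lambda)V_+(\tau.z,\overline{\tau.z},\lambda)^{-1}$, hence is $\id$, which is the same uniqueness argument in different words). The only cosmetic difference is in the invariance of $\mathcal{I}_\omega$: you argue directly via stability of the double coset $\LISU\cdot\omega_0\cdot\LSLP$ under left multiplication by $\LISU$, whereas the paper deduces it indirectly from the invariance of $\mathcal{I}_e$ together with openness of $\tau$; your version is, if anything, slightly cleaner.
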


  \begin{proof}
$(1)$ By the definition of a symmetry we have $C(\tau.z,\lambda) 
 = \rho(\tau, \lambda) C(z,\lambda)$ with $\rho(\tau, \lambda) \in \LISU$. Using \eqref{IwasawaIe} we derive
  $C(\tau.z,\lambda) = \rho(\tau, \lambda) F(z, \bar z, \lambda) V_+(z, \bar z, \lambda)$. 
 This is an Iwasawa decomposition with factors 
  $\rho(\tau, \lambda) F(z, \bar z, \lambda) $ and 
 $V_+(z, \bar z, \lambda)$.
 Hence $\tau.z \in \mathcal{I}_e$.
 Let now $w \in \mathcal{I}_\omega$. Then $\tau(w) \notin \mathcal{I}_e,$ since 
 $\tau$ leaves $\mathcal{I}_e$ invariant. Since $\tau$ is an open map, the image of 
 $\mathcal{I}_\omega$ under $\tau$ can not attain a  point 
 in  $\widetilde{\Rim} \setminus \mathcal{I}_e \cup \mathcal{I}_\omega$ either.

 $(2)$ The general theory tells us   
 $F(\tau.z, \overline{\tau.z}, \lambda) = \rho(\tau, \lambda) F(z,\bar z,  \lambda) k(z, \bar z).$  On the other hand, we obtain from (\ref{IwasawaIe}) the equations $F(\tau.z, \overline{\tau.z}, \lambda) V_+(\tau.z, \overline{\tau. z}, \lambda)= 
 C(\tau.z,\lambda) = \rho(\tau, \lambda) C(z, \lambda) = 
 \rho(\tau, \lambda) F(z, \bar z, \lambda) V_+(z, \bar z, \lambda).$ Hence 
 $k(z, \bar z) = V_+(z, \bar z, \lambda)V_+(\tau.z, \overline{\tau. z}, \lambda)^{-1}$ and $k$ is actually the leading term of this product. But by assumption, the leading term is positive real, while $k$ is unitary. Therefore $k = \id$.
 \end{proof}
\begin{Remark}
 The frame $F$ obtained by Theorem \ref{thm:invariantcase1}
  is 
 a general extended frame of a harmonic map into $\mathbb H^2$ in 
 the sense of 
 Definition \ref{dfn:generalext}, and it is an extended frame of 
 some minimal surface in $\Nil$.
\end{Remark}
 Note, as a consequence of part $(1)$ above, $\tau$ also acts bijectively on $\widetilde{\Rim} \setminus \mathcal{I}_e \cup \mathcal{I}_\omega$.
 To discuss the behaviour of the extended frame under 
 $\tau \in \pi_1(\Rim)$ on $z \in \mathcal I_w$, 
 in the next subsubsection
 we consider an analytic continuation of a minimal surface 
 defined on $z \in \mathcal I_e$ to a minimal surface 
 defined on $z \in \mathcal I_w$ using 
 a unique meromorphic extension.
 

\subsubsection{Meromorphic extension of a minimal surface} \label{sbsc:MeroExt}
 In this subsubsection we extend a result of \cite{DIKH3} to the present paper. We start by explaining what this result means for the surfaces considered in  
 \cite[Section 9.3, 9.4]{DIKH3}, that is,
 the  constant mean curvature $0<H<1$ surfaces in the 
hyperbolic $3$-space $\mathbb{H}^3$. 
Let $\D$ be a simply connected domain in $\C$ and $e \in \D$.  Moreover, let
$\eta$ be a holomorphic potential for a surface of the class considered. Then, solving the ODE $dC = C \eta, C(e, \lambda) = \id$ we obtain a ``holomorphic extended frame'' 
 defined on $\D$.
 It turns out that the ``Gauss map'' has as target space a 
 non-compact $4$-symmetric space $\mathrm{SL}_2\mathbb{C}/\mathrm{U}_1$. 
 The Lie group $\mathrm{SL}_2\mathbb{C}$ defining this $4$-symmetric space is non-compact. 
 In particular, not each matrix in the twisted loop group of 
 $\mathrm{SL}_2\mathbb{C}$ associated to 
 the $4$-symmetric space $\mathrm{SL}_2\mathbb{C}/\mathrm{U}_1$
 has an Iwasawa decomposition of the form \eqref{IwasawaIe}.
 However, as in the case of the present paper, 
 there exist two open Iwasawa cells, $\mathcal I_e$ and  $\mathcal I_w$ 
 for which $C$ has a decomposition similar to what was stated in the 
 Iwasawa decomposition Theorem just above. Applying the Sym-formula 
 to the frame obtained by the Iwasawa decomposition for 
 $z \in  \mathcal I_e$ one obtains a surface of the type considered 
(actually a surface on each connected component of  $\mathcal I_e$.
 It is not difficult to show that these surfaces are uniquely determined by $C$.)
 One can apply a similar procedure for the set $\mathcal I_w$. 
This way one always obtains (at least) two surfaces, one on  $\mathcal I_e$ and one on  $ \mathcal I_w$.
 How are these surfaces related? One can show that, in general, any extended frame 
 defined from $C$ by Iwasawa decomposition experiences a catastrophic singularity along the boundary between 
 $\mathcal I_e$ and  $ \mathcal I_w$.
It is now of great importance, 
that each constant mean curvature $H<1$ surface in the 
hyperbolic $3$-space $\mathbb{H}^3$ defined by the extended frame 
(via the Sym formula for constant mean curvature $H<1$ surfaces in 
$\mathbb{H}^3$) has a meromorphic 
extension to two complex variables $(z,w) \in \D \times  \overline{\D}$. 
 Thus this extension is a complex(ified) meromorphic surface which restricts on  $\mathcal I_e \cup \mathcal I_w$ to 
 meromorphic surfaces of constant mean curvature $0<H<1$. 
 Loosely speaking, each  constant mean curvature $0 < H < 1$ surface in $\mathbb{H}^3$
 defined on the first cell $\mathcal{I}_e$ can 
 be analytically continued to the second cell $\mathcal{I}_\omega$. 
For more details we refer to \cite[Section 9.4]{DIKH3}(see also \cite[Theorem 3.2]{DK:cyl}).

 There is only little known about how these real surfaces are related. In general, these surfaces are highly singular along the boundary between  $\mathcal I_e$ and  $ \mathcal I_w$.
 But in some cases the surfaces extend smoothly across the boundary (with vanishing functional determinant, of course.) 
 See \cite{K:H3} for some results in this direction.
 
 Analogously, in the situation considered in this paper,   
 the Sym formula in \eqref{eq:symNil}
 for minimal surfaces in $\Nil$ 
 defined on $\mathcal{I}_e$ can be 
 analytically continued to $\mathcal{I}_\omega$. 
 This works as follows:
 Let $ C = F V_+$ be an Iwasawa decomposition for $z \in \mathcal{I}_e$.
 In view of \cite[Theorem 3.2]{DK:cyl}, which can be checked 
 to also hold in the present case, one can extend $F l$ 
 meromorphically to $\D \times \overline{\D}$, where $l$ is 
 a properly chosen  
 $\lambda$-independent diagonal matrix. Moreover, note that 
 the proof of \cite[Theorem 3.2]{DK:cyl} shows that
  $l_0^2>0$ for $z \in \mathcal I_{e}$ and $l_0^2<0$ 
 for $z \in \mathcal I_{w}$, where $l_0$ is the $(1, 1)$-entry of $l$. 
 These facts are proven in Appendix 
 \ref{sc:Meroextension} below in detail.
 Then the Sym formula $f_{\Min}$ for spacelike surface in $\Min$ 
 in \eqref{eq:SymMin} 
 can be rephrased as
 \begin{equation*} 
   f_{\Min}=-i \l (\partial_{\l} (F l) ) (Fl)^{-1} 
 - \frac{i}{2} \ad (Fl) \sigma_3,
\end{equation*}
 where $\sigma_3 =\left(\begin{smallmatrix}1 & 0 \\ 0 & -1 
 \end{smallmatrix}\right)$. Then $f_{\Min}$
 clearly has a meromorphic  extension to 
 $\D \times \overline{\D}$. Therefore the formula in \eqref{eq:symNil}
\begin{equation*}
 \hat f = 
    (f_{\Min})^o -\frac{i}{2} \l (\partial_{\l}  f_{\Min})^d, 
\end{equation*}
 and the whole Sym formula  have accordingly  
 a meromorphic extension to $\D \times \overline{\D}$.
 Note, so far we have used the meromorphic extension of the 
 frame obtained by an Iwasawa decomposition for values 
 in the \emph{first Iwasawa cell} $\mathcal{I}_e$.

 Next we want to express this formula for the immersion by a 
 formula using the frame occurring in the 
 Iwasawa decomposition of $C(z,\lambda)$ for $z \in  \mathcal{I}_\omega$.
 Let $C = \tilde F \omega_0 \tilde V_+$ be an Iwasawa decomposition 
 for   $z \in \mathcal I_{w}$. 
 On the one hand, choosing a $\lambda$-independent diagonal matrix
 $k$ with positive entries such that $k^{-2} = - l^{-2}$
 (note that the $(1, 1)$-entry $l_0$ of $l$ satisfies 
 $- l_0^{-2}>0$ for $z \in \mathcal I_w$), 
 we have that
 \begin{equation} \label{IwasawaIomega}
  C = (F l k^{-1} \omega_0^{-1} ) \omega_0 (k l^{-1} V_+)
 \end{equation}
 is the Iwasawa decomposition for  $z \in \mathcal I_{w}$, see 
 Appendix \ref{subsc:AppIwasawa} below.
 The formula just above yields, written out, the original formula
 $C = F V_+$.  This is also an Iwasawa decomposition 
 for the \emph{second Iwasawa cell}, thus $\tilde F = F l k^{-1} \omega_0^{-1}$.
 Therefore
 \[
 F l = \tilde F \omega_0 k.
\]
 Then, for $z \in \mathcal I_w$, $f_{\Min}$ can be rephrased as 
\[
 f_{\Min} = -i \l (\partial_{\l} (\tilde F \omega_0 )) 
 (\tilde F \omega_0)^{-1} 
 - \frac{i}{2} \ad (\tilde F \omega_0) \sigma_3
\]
 Thus it is natural to use for $z \in \mathcal{I}_w$  
 formula \eqref{eq:symNil}  and the
 whole Sym formula  and to use this formula for 
 $\tilde F \omega_0$. Therefore in the second Iwasawa cell 
 actually $\tilde F \omega_0$ is ``the frame'' to use.

\subsubsection{Symmetries of the meromorphic extension}
 Here we discuss symmetries of the meromorphic extension of 
 a minimal surface.
Like in \cite[Section 3]{DK:cyl}  we consider the pair of potentials 
$\left(\eta(z, \lambda), \varphi(\eta(w, \lambda))\right)$, where $\varphi$ denotes the involution of the loop algebra 
$\Lambda\mathfrak{sl}_{2}\mathbb{C}_{\sigma}$ defined by 
\eqref{eq:varphi} which determines the 
\emph{real form} $\Lambda\mathfrak{su}_{1,1\sigma}$, the 
Lie algebra of $\LISU$.

Assume that $\eta$ is an invariant potential 
 under $\pi_1(\Rim)$, thus $\varphi(\eta)$ is also invariant under 
 $\pi_1(\Rim)$.
Consider the pair of differential equations
 \begin{equation*}
  d (C, R) = (C, R) (\eta, \varphi (\eta)).
\end{equation*}
Then we obtain for the second potential the solution 
 $R(w, \l) = \varphi (C(w, \l))$, where $\varphi$ denotes the real 
 form involution on the group level. 
 Assume that 
\[
 \rho (\tau, \l) \in \LISU, 
\]
for some $\tau \in \pi_1 (\Rim)$.
 Then relative to $(\tau, \rho)$ both solutions have the same monodromy matrix, that is,
\[
 C (\tau. z) = \rho(\tau, \l)  C(z), \quad 
 R (\bar \tau. w) = \rho(\tau, \l)  R(w).
\]
 By using \eqref{eq:Birkhoffdouble} and \eqref{eq:U}, 
 we have 
\begin{equation*}
U(z,w,\lambda)=C(z,\lambda) V_+^{-1} (z, w, \lambda) 
= R(w,\lambda) V_-^{-1} (z, w, \lambda) B(z, w),
\end{equation*}
 whence 
\begin{equation} \label{doublespliting}
R(w,\lambda) ^{-1} C(z,\lambda) = V_{-}(z,w,\lambda)^{-1} B (z,w)  V_+(z,w,\lambda),
\end{equation}
 where $V_{-}(z,w,\lambda)$ and   $V_+(z,w,\lambda)$ have leading term 
 $\id$ and $B$ is diagonal.

 In this form all three factors are uniquely determined. Therefore, since the left side does not change, if one replaces $w$ by $\bar \tau .w$ and $z$ by $\tau .z$, this also holds for the three factors on the right side. 
Substituting this into \eqref{doublespliting},
 we obtain the equations
\begin{gather*} 
R(\bar{\tau}. w,\lambda) ^{-1} C(\tau.z,\lambda) = 
R(w,\lambda) ^{-1} C(z,\lambda), \\
V_{\pm}(\tau.z, \bar{\tau}. w,\lambda)  = V_{\pm}(z,w,\lambda),
 \quad\mbox{and}\quad B (\tau.z,\bar{\tau}.w) = B(z,w).
\end{gather*}
Then 
\begin{equation*}
 U(\tau.z, \bar{\tau}.w, \l) = 
 \rho(\tau,\lambda) U(z,w,\lambda)  S_+(z, w, \lambda),
\end{equation*}
 for some plus matrix $S_+$. Since $\hat \varphi U = UB^{-1}$, 
 it follows that $S_+$ is diagonal.
\subsubsection{The case $ C(z, \lambda) \in  \LISU \cdot \omega_0 \cdot  \LSLP$ in the monodromy case 1}
 For $z \in \mathcal{I}_{\omega}$ we choose the (unique) Iwasawa 
 decomposition
\begin{equation} \label{IwasawaIw}
C(z, \lambda) = \tilde F(z, \bar z, \lambda) \omega_0 \tilde V_+(z, \bar z, \lambda),
\end{equation}
 where the diagonal of the first term of $\tilde V_+$ is assumed positive.
In this subsubsection it is our goal to find a transformation formula for symmetries of the surface over $\mathcal{I}_\omega$ generated by some potential $\eta$. We  recall that one should use the Iwasawa decomposition formula \eqref{IwasawaIomega} and hence should use  
\[
\tilde{F}\omega_0 = F l k^{-1}
\]
in the usual Sym formula not $F$.
This was obtained above by using  \cite[Theorem 3.2]{DK:cyl} generalized to our present case, see Appendix  \ref{sc:Meroextension} 
 for  details.  To find the correct transformation formula for symmetries we need to proceed analogously.
\begin{Theorem} 
 Retain the assumptions of Theorem {\rm \ref{thm:invariantcase1}}
 and choose the unique Iwasawa 
 decomposition $C = \tilde F \omega_0 \tilde V_+$
 for $z \in \mathcal I_w$ as in \eqref{IwasawaIw}.
 Then for all $z \in \mathcal{I}_w$,
 \[
\tilde F(\tau.z, \overline{\tau.z}, \lambda) \omega_0 = \rho(\tau, \lambda) \tilde F(z,\bar z,  \lambda) \omega_0.
\]
\end{Theorem}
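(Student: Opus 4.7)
The plan is to mirror the proof of Theorem~\ref{thm:invariantcase1}(2), with the first Iwasawa cell replaced by the second. The three key inputs are: the monodromy relation $C(\tau.z,\lambda) = \rho(\tau,\lambda) C(z,\lambda)$; the fact (Theorem~\ref{thm:invariantcase1}(1)) that $\tau$ preserves $\mathcal{I}_\omega$, so that both $z$ and $\tau.z$ admit the normalized decomposition \eqref{IwasawaIw}; and the uniqueness of that normalized decomposition, in which the leading diagonal of $\tilde V_+$ is positive.

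First I would substitute \eqref{IwasawaIw} into both sides of the monodromy identity, obtaining
\begin{equation*}
 \tilde F(\tau.z,\overline{\tau.z},\lambda)\, \omega_0\, \tilde V_+(\tau.z,\overline{\tau.z},\lambda)
 = \bigl(\rho(\tau,\lambda)\tilde F(z,\bar z,\lambda)\bigr)\, \omega_0\, \tilde V_+(z,\bar z,\lambda).
\end{equation*}
Because $\rho(\tau,\lambda) \in \LISU$, the factor $\rho(\tau,\lambda)\tilde F(z,\bar z,\lambda)$ again lies in $\LISU$, while $\tilde V_+(z,\bar z,\lambda)$ has positive leading diagonal by assumption. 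Hence the right-hand side is itself a normalized second-cell Iwasawa decomposition of $C(\tau.z,\lambda)$. Uniqueness then forces $\tilde F(\tau.z,\overline{\tau.z},\lambda) = \rho(\tau,\lambda)\tilde F(z,\bar z,\lambda)$, and right-multiplication by $\omega_0$ gives the claim.

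An alternative route, closer in spirit to the paper's proof of Theorem~\ref{thm:invariantcase1}(2), is to invoke the DPW-style general theory to produce a $\lambda$-independent $\Uone$-valued crossed homomorphism $k(z,\bar z)$ with $\tilde F(\tau.z,\overline{\tau.z},\lambda)\omega_0 = \rho(\tau,\lambda)\tilde F(z,\bar z,\lambda)\omega_0 \cdot k(z,\bar z)$. Comparing expressions shows that $k$ equals the leading term of $\tilde V_+(z,\bar z,\lambda)\tilde V_+(\tau.z,\overline{\tau.z},\lambda)^{-1}$, which is positive real by normalization; since it is also $\Uone$-valued, it must equal $\id$.

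The main obstacle I anticipate is justifying the uniqueness of the second-cell Iwasawa decomposition under the positivity normalization. The analogous statement in the first cell is by now standard, but the presence of the fixed middle factor $\omega_0$ requires a small additional check — essentially that no diagonal $\Uone$-correction can be absorbed across $\omega_0$ without violating the positivity condition on $\tilde V_+$. This should follow from \cite{BRS:Min, Kellersch} and the structure of the Iwasawa cells, but care is warranted to ensure the stabilizer of the normalization inside $\LISU \times \LSLP$ is trivial.
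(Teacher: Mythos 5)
Your proposal is correct and matches the paper's argument: the paper's proof is exactly your ``alternative route'' --- it invokes the general theory to write $\tilde F(\tau.z,\overline{\tau.z},\lambda)\omega_0 = \rho(\tau,\lambda)\tilde F(z,\bar z,\lambda)\omega_0\,\tilde k(z,\bar z)$, identifies $\tilde k$ with the leading term of $\tilde V_+(z,\bar z,\lambda)\tilde V_+(\tau.z,\overline{\tau.z},\lambda)^{-1}$, and concludes $\tilde k=\id$ since that term is positive real while $\tilde k$ is unitary. Your primary route via uniqueness of the normalized second-cell decomposition is just a repackaging of the same comparison, and the uniqueness concern you raise is already subsumed in the paper's statement of \eqref{IwasawaIw}.
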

\begin{proof}
 The general theory tells us   
 $\tilde F(\tau.z, \overline{\tau.z}, \lambda) \omega_0(\l)= 
 \rho(\tau, \lambda) 
 \tilde F(z,\bar z,  \lambda) \omega_0(\l) \tilde k(z, \bar z).$  On the other hand, we obtain from (\ref{IwasawaIe}) the equations 
 \[
  \tilde F(\tau.z, \overline{\tau.z}, \lambda) \omega_0  \tilde V_+(\tau.z, \overline{\tau. z}, \lambda)= 
 C(\tau.z,\lambda) = \rho(\tau, \lambda) C(z, \lambda) = 
 \rho(\tau, \lambda) \tilde F(z, \bar z, \lambda) \omega_0 
 \tilde V_+(z, \bar z, \lambda).
 \] 
 Hence  $\tilde k(z, \bar z) = \tilde V_+(z, \bar z, \lambda)\tilde V_+(\tau.z, \overline{\tau. z}, \lambda)^{-1}$ and $\tilde k$ is actually the leading term of this product. But by assumption, the leading term is positive real, while $\tilde k$ is unitary. Therefore $\tilde k = \id$.
\end{proof}

\subsubsection{The closing condition}
 Let us consider next a single symmetry $(\tau,\rho(\tau, \lambda))$ 
 of $C(z,\lambda)$.
 Then from Theorem \ref{thm:symmetry} 
 we infer that $\tau$ can induce a symmetry of some minimal 
 surface in $\Nil$ if and only if $\rho(\tau, \lambda = 1)$ 
 has only unimodular eigenvalues.
 Let us consider now $\hat{F} = S F$, 
 where 
\begin{equation}\label{eq:diagonalU}
 \mbox{$S(\lambda)$ takes values in 
 $\LISU$ and $S(\l =1)$
 diagonalizes $\rho (\tau, \lambda = 1)$.}
\end{equation}
 Then we obtain the following theorem.
\begin{Theorem}\label{thm:closing}
  Retain the notation and the assumptions of Theorem 
 {\rm \ref{thm:invariantcase1}} and assume that $S$ satisfies 
 \eqref{eq:diagonalU}. Let $\hat{f}$ be the minimal surface in $\Nil$ 
 defined on  $\mathcal{I}_e$ or $\mathcal{I}_w$
 and defined from $\hat{F} = S F$ or $S \tilde F\omega_0$
 via the Sym formula \eqref{eq:symNil}.
 Then the monodromy matrix $M(\tau,\lambda) 
 = S(\lambda) \rho(\tau, \lambda) S(\lambda)^{-1}$ is in $\LISU$ 
 has only unimodular eigenvalues and  
 is diagonal
 for $\lambda=1$.  Moreover, $\hat{f}|_{ \lambda = 1}$ satisfies
\[
 \hat {f}(\tau. z, \overline{\tau. z}, {\lambda = 1})
 = \hat {f}(z, \bar z, {\lambda = 1})
\]
 for all $z\in \mathcal{I}_e$ or $z\in \mathcal{I}_w$
  if and only if
\begin{equation}\label{eq:closing}
 M(\lambda =1) =  \pm \id, \quad
 X^o(\lambda =1) = 0 \quad \mbox{and} \quad Y^d(\lambda =1)= 0
\end{equation}
 holds, where $ X = -i \l (\partial_{\l} M) M^{-1}$ and 
 $Y =  - \frac{1}{2}  \lambda \partial_{\lambda} (\lambda (\partial_{\lambda} M) M^{-1})$,
 respectively.
\end{Theorem}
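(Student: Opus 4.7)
The plan is to apply the explicit transformation formula derived in the proof of Theorem \ref{thm:symmetry} to $\hat{F}$ and then compare terms. Since $\hat{F}(\tau.z)=M(\tau,\lambda)\hat{F}(z)$ with $M=S\rho S^{-1}$ (by Theorem \ref{thm:invariantcase1} applied to $F$ over $\mathcal{I}_e$, respectively to $\tilde F\omega_0$ over $\mathcal{I}_w$), exactly the computation in the proof of Theorem \ref{thm:symmetry} yields
\[
\hat f(\tau.z)\big|_{\lambda=1}=\ad(M)\hat f(z)\big|_{\lambda=1}+\tfrac{1}{2}\bigl([X^o,(\ad(M)f_{\Lt}(z))^o]\bigr)^d\big|_{\lambda=1}+T\big|_{\lambda=1},
\]
where $T$ is the matrix whose off-diagonal entries encode $X^o(\lambda=1)$ and whose diagonal entry encodes $Y^d(\lambda=1)$. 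Closing of $\hat f$ at $\lambda=1$ is the statement that this equals $\hat f(z)|_{\lambda=1}$ identically in $z$.

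First I would dispatch the preliminary structural claims: $\LISU$ is closed under conjugation, so $M\in\LISU$; the eigenvalues of $M(\lambda=1)$ coincide with those of $\rho(\tau,\lambda=1)$, and unimodularity is forced by the requirement from Theorem \ref{thm:symmetry} that the monodromy at $\lambda=1$ implements an isometry of $\mathbb{H}^2$; diagonality at $\lambda=1$ is built into the choice of $S$ in \eqref{eq:diagonalU}.

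Next I would read off the three conditions \eqref{eq:closing} from the displayed formula. The first term demands $\ad(M(\lambda=1))=\id$. Since $M(\lambda=1)$ is diagonal in $\mathrm{SU}_{1,1}$ with unimodular eigenvalues, it has the form $\di(e^{i\theta/2},e^{-i\theta/2})$, whose adjoint acts on the off-diagonal part by rotation by $\theta$; this is trivial exactly when $e^{i\theta}=1$, i.e., $M(\lambda=1)=\pm\id$. Once this holds, $\ad(M)=\id$ and the commutator term reduces to $\tfrac{1}{2}([X^o,f_{\Lt}^o])^d|_{\lambda=1}$. The remaining vanishing of $T|_{\lambda=1}$ is, by direct inspection of its matrix form in the proof of Theorem \ref{thm:symmetry}, equivalent to $X^o(\lambda=1)=0$ together with $Y^d(\lambda=1)=0$; the first of these simultaneously kills the commutator correction, so the three conditions in \eqref{eq:closing} are both necessary and sufficient.

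The main obstacle is treating the second Iwasawa cell uniformly with the first. For $z\in\mathcal{I}_e$ one uses $\hat F=SF$, but for $z\in\mathcal{I}_w$ the correct frame to feed into the Sym formula is $S\tilde F\omega_0$, which involves the $\lambda$-dependent factor $\omega_0$. Here I would invoke the meromorphic extension analysis of the previous subsubsections to verify that the monodromy transformation law $\hat F(\tau.z)=M(\tau,\lambda)\hat F(z)$ holds with the same $M$ on both cells (the $z$-independent $\omega_0$ passes through unaltered under $\tau$), so that the Theorem \ref{thm:symmetry} computation applies verbatim in both cases. A secondary subtlety is the case $M(\lambda=1)=-\id$, which is trivial in $\ad$ but is a genuine nontrivial element of the loop group; the corresponding sign is absorbed by the inherent $\pm1$ ambiguity of the spinor lift and therefore does not obstruct closing of the surface $\hat f$ itself, which justifies the ``$\pm$'' in the condition $M(\lambda=1)=\pm\id$.
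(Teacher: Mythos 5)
Your proposal is correct and follows essentially the same route as the paper: both reduce the closing condition to the transformation formula established in the proof of Theorem \ref{thm:symmetry}, identify $(p,q,r,e^{i\theta})$ with $X^{o}|_{\lambda=1}$, $Y^{d}|_{\lambda=1}$ and $M(\lambda=1)$, and read off \eqref{eq:closing}. The one step you assert without justification is that the first term ``demands'' $\ad(M(\lambda=1))=\id$; the paper supplies the missing argument by noting that if the fiber rotation angle $\theta$ were nontrivial, the first two coordinate equations would admit only a $z$-independent solution for $(\hat f_1,\hat f_2)$, contradicting that $\hat f$ is a surface, after which $p=q=r=0$ follows immediately.
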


\begin{proof} We abbreviate $\hat f(z, \bar z, \l =1)$ by $\hat f(z)$.
 We want to characterize what it means that 
 $\hat{f}(\tau.z) = \hat{f}(z)$ holds.
 Using the definition of the action of the group of isometries
 we obtain (setting  $\hat{f} =(\hat{f}_1, \hat{f}_2, \hat{f}_3)$) as in the proof of Part (b) in Theorem \ref{thm:symmetry} :
 \begin{align*}
(\hat{f}_1(\tau. z), \hat{f}_2(\tau.z), \hat{f}_3(\tau.z)) &= ((p, q, r), e^{i \theta}) . (\hat{f}_1(z), \hat{f}_2(z), \hat{f}_3(z)) \\
 &=(p, q, r) \cdot (\cos \theta \hat{f}_1(z) - \sin \theta \hat{f}_2(z), \sin \theta \hat{f}_1(z) + 
 \cos \theta \hat{f}_2(z), \hat{f}_3(z)),
\end{align*}
 where $\theta$ and $(p, q, r)$ are 
 defined by $M(\tau, \lambda = 1)  = \di (e^{i\frac{\theta}{2}}, e^{-i \frac{\theta}{2}})$, 
\[
 X|_{\lambda =1} 
 =\frac{1}{2} 
 \begin{pmatrix} 
  * &  -q + i p  \\
  -q -i p &  *
 \end{pmatrix}, 
 \quad \mbox{and} \quad 
 Y|_{\lambda =1} =
\frac{1}{2}
 \begin{pmatrix} 
  - i r &  * \\
  *& i r
 \end{pmatrix},
\]
 respectively. As a consequence, the 
 following conditions are equivalent to $ \hat{f}(\tau.z) =  \hat{f}(z)$ :
\begin{align*}
 &p + \cos \theta  \hat{f}_1(z) - \sin \theta  \hat{f}_2(z)  =  \hat{f}_1(z), \quad q + \sin \theta  \hat{f}_1(z) + \cos \theta  \hat{f}_2(z) =  \hat{f}_2(z) \\
 &r +  \hat{f}_3(z) + \frac{1}{2}( p( \sin \theta  \hat{f}_1(z) + \cos \theta  \hat{f}_2(z) ) - q(\sin \theta  \hat{f}_1(z) 
 + \cos \theta  \hat{f}_2(z) )= \hat{f}_3(z).
\end{align*}
 It is easy to verify that the first two equations only have a $z$-independent solution 
 if $\cos \theta \neq 1$. This does not make sense in our case, 
 since $f$ defines a surface.
 We thus can assume without loss of generality 
 that $\cos \theta = 1$. But in this case $p = q = r = 0$ 
 and the claim follows, since $M$, $X^{o}$ and $Y^{d}$
 clearly satisfy the conditions \eqref{eq:closing}.
\end{proof}
 The condition $ M (\tau, \lambda =1) =   \id$ implies that 
 we can choose without loss of generality
 $S(\lambda) \equiv \id$ above. Hence we obtain
\begin{Corollary}\label{cor:closing}
 Retain the notation and the assumptions of Theorem 
 {\rm \ref{thm:closing}}. Let $\hat{f}$ be the minimal 
 surface in $\Nil$ defined on $\mathcal{I}_e$ or $\mathcal{I}_w$
 and defined from 
 $F$ or $\tilde F \omega_0$ via the Sym formula  \eqref{eq:symNil}.
 In particular, assume that
 the monodromy matrices $M(\tau,\lambda) = \rho(\tau,\lambda)$ 
 are  in  $\LISU$ and all $\tau \in \pi_1(\Rim)$ 
 and attain the value $\id$  for $\lambda = 1$.
 Then  $\hat{f}|_{\lambda = 1}$ satisfies for all $z\in \mathcal{I}_e$ 
 or $z\in \mathcal{I}_w$ and all $\tau \in \pi_1(\Rim):$ 
\[
 \hat{f}(\tau. z, \overline{\tau. z} , \lambda = 1) = \hat{f}(z, \bar z, \lambda = 1)
\]
 if and only if the following holds$:$
\begin{equation*}
 X^o(\lambda =1) = 0 \quad \mbox{and} \quad Y^d(\lambda =1)= 0.
\end{equation*}
\end{Corollary}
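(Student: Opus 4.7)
The plan is to derive this corollary by specializing the general closing result Theorem \ref{thm:closing} to the case where no auxiliary diagonalization is needed. The key observation is that the standing hypothesis $\rho(\tau,\lambda=1)=\id$ for every $\tau \in \pi_1(\Rim)$ makes the ``diagonalization'' step of Theorem \ref{thm:closing} trivial, since the identity matrix is already diagonal. So I would simply choose $S(\lambda) \equiv \id$, which lies in $\LISU$ and vacuously satisfies the requirement \eqref{eq:diagonalU} that $S(\lambda=1)$ diagonalize $\rho(\tau,\lambda=1)$.

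With this choice, the conjugated monodromy from Theorem \ref{thm:closing} becomes $M(\tau,\lambda) = S(\lambda)\rho(\tau,\lambda)S(\lambda)^{-1} = \rho(\tau,\lambda)$ on the nose, and the associated frame $\hat F = SF$ (respectively $S\tilde F\omega_0$) coincides with the frame $F$ (respectively $\tilde F\omega_0$) used in the statement of the corollary. Thus the immersion $\hat f$ constructed via \eqref{eq:symNil} in Theorem \ref{thm:closing} is literally the same immersion as the one in the corollary, so the equivalence stated in Theorem \ref{thm:closing} applies verbatim.

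Now I would inspect the three conditions \eqref{eq:closing} of Theorem \ref{thm:closing}: the first demands $M(\lambda=1) = \pm \id$. Under our hypothesis this reads $\rho(\tau,\lambda=1) = \id$, which holds automatically for every $\tau \in \pi_1(\Rim)$ and so may be dropped from the list. The remaining two conditions, $X^o(\lambda=1)=0$ and $Y^d(\lambda=1)=0$ with $X$ and $Y$ computed from $M = \rho$ exactly as in Theorem \ref{thm:closing}, are therefore equivalent to the closing of the immersion $\hat f(\tau.z,\overline{\tau.z},\lambda=1) = \hat f(z,\bar z,\lambda=1)$ for all $z \in \mathcal{I}_e$ (or all $z \in \mathcal{I}_\omega$) and all $\tau \in \pi_1(\Rim)$. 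No genuine obstacle arises: the entire content is already contained in Theorem \ref{thm:closing}, and the corollary is a streamlining of its statement under the stronger hypothesis $\rho(\tau,\lambda=1)=\id$.
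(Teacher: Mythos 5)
Your proposal is correct and follows exactly the paper's own derivation: the paper obtains the corollary from Theorem \ref{thm:closing} by the single observation that $M(\tau,\lambda=1)=\id$ permits the choice $S(\lambda)\equiv\id$, which makes the first condition of \eqref{eq:closing} automatic and leaves only $X^o(\lambda=1)=0$ and $Y^d(\lambda=1)=0$. Applying this for each $\tau\in\pi_1(\Rim)$ separately, as you do, is precisely the intended argument.
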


\begin{Remark}
If the general extended frame is in one of the two open cells, then it will stay in the same open cell when subjected to the action of some symmetry. As a consequence, if a frame ever reaches the boundary between the two open Iwasawa cells, then it will stay there under the action of any symmetry. If $(\tau, \rho)$ denotes a symmetry of some $f$, then the image $f(\D)$ is the union of three parts:
$f(\mathcal{I}_e)$,  $f(\mathcal{I}_\omega),$ and $f(\mathcal{B})$, where $\mathcal{B}$ denotes the boundary between the open Iwasawa cells. 
\end{Remark}


\subsection{The monodromy case 2}\label{subsection:mono2}
 We respectively discuss the monodromy case 2 with 
 $z \in \mathcal I_e$ or $z \in \mathcal I_{\omega}$. 
\subsubsection{The case of $z \in \mathcal I_e$} 
 For the construction of a symmetry $(\gamma, \rho)$ one 
 frequently starts 
 from some potential $\eta$, which is (say up to a gauge) invariant under $\gamma$
\[
 \eta \circ \gamma = \eta \# W_+,
\]
where $W_+ : \mathbb D \to \LSLP$ and where $\#$ means ``gauging'', that is,
\[
 \eta \# W_+ = W_+^{-1} \eta  W_+ + W_+^{-1} d W_+.
\]
 Note that $\eta$ is an invariant potential under $\gamma$ 
 if $W_+= \id$.
 Then the solution $C(z, \lambda)$ to 
\[
 d C = C \eta
\]
 with some initial condition $C(z = z_0, \l) \in \LSL, \; z \in 
 \mathcal I_e$ satisfies 
\begin{equation}\label{eq:Cgamma}
 C(\gamma. z, \lambda) = L (\gamma, \lambda) C(z, \lambda) W_+(\gamma, 
 z, \lambda)
\end{equation}
 for some $L \in \LSL$. If $L \in \LISU$, then the Iwasawa decomposition $C = F V_+$
 implies 
\[
 F( \gamma. z, \overline{\gamma. z}, \lambda ) = L(\gamma, \lambda) F(z, \bar z, \lambda) 
 k(\gamma, z, \bar z),
\]
 for some diagonal matrix $k \in \Uone$.
 In general one will obtain $L \notin \LISU$.
 Then the formula just above can not be obtained. So it seems impossible to obtain a symmetry associated with the action of $\gamma$.
 However, in some cases a symmetry $(\gamma, \rho)$ does exist
 (see for example \cite{DH:periodic}).
 Then in addition to \eqref{eq:Cgamma} we also have 
\[
 C(\gamma. z, \lambda) = \rho (\gamma, \lambda) C (z, \lambda) Q_+ (\gamma, z, \lambda),
\]
with $\rho (\gamma, \lambda) \in \LISU$.
 Then 
\[
  L(\gamma, \lambda)^{-1} \rho (\gamma, \lambda) C (z, \lambda) = C (z, \lambda) W_+ (\gamma, z, \lambda)
 Q_+ (\gamma, z, \lambda)^{-1}.
\]
 Since we consider surfaces defined on $ \mathcal I_e$ we choose a base point 
$z_0 \in \mathcal I_e$ such that $C(z_0,\lambda) = \id$.
 Putting $z = z_0$ yields 
 \[
  L(\gamma, \lambda)^{-1}\rho(\gamma, \lambda) = W_+ (\gamma, z_0, \lambda) Q_+ (\gamma, z_0, \lambda)^{-1}.
 \]
 As a consequence 
\[
 \rho (\gamma, \lambda) = L(\gamma, \lambda) b_+ (\gamma, \lambda)  \in \LISU
\]
 and 
\[
 b_+(\gamma, \lambda) C (z, \lambda) = C(z, \lambda) B_+ (\gamma, z, \lambda)
\]
 with $B_+ (\gamma, z, \lambda) = W_+ (\gamma, z, \lambda) Q_+(\gamma, z, \lambda)^{-1}$.
\begin{Theorem}\label{thm:charactmono}
 Assume $\eta$ is a potential for a  minimal surface in $\Nil$ and 
 satisfies 
\[
 \eta\circ \gamma = \eta \# W_+
 \]
 for some $W_+ \in \LSLP$, $\gamma \in \Aut (\mathbb D)$ and 
 where $\#$ denotes gauging. Then for the solution to $d C  = C \eta, 
 C(z_0, \lambda) = \id$ for some fixed base point $z_0 \in \mathcal I_e$,  we obtain
 \[
  \gamma^* C = L C W_+,
 \]
 where $L \in \LSL$. 
 Moreover, the following statements are equivalent$:$
\begin{enumerate}
 \item There exists a $\rho \in \LISU$  such 
 that $(\gamma, \rho)$ is a symmetry of the minimal surface in $\Nil$
 associated with $\eta$. 
  \item There exists some $b_+ \in \LSLP$ such that 
 the following conditions are satisfied$:$
\begin{enumerate} 
\item $L(\lambda) b_+ (\lambda)^{-1} \in \LISU$, 
\item $b_+ (\lambda) C(z, \lambda) = C(z, \lambda) B_+ (z, \lambda)$
 for some $B_+ (z, \lambda) \in \LSLP$, 
 \item $L(\lambda) b_+ (\lambda)^{-1}|_{\l=1}$ has unimodular eigenvalues.
\end{enumerate}
\end{enumerate}
\end{Theorem}
\begin{proof}
 From the discussion above, the necessary part is clear.
 Thus we only need to prove sufficiency. But
 $C \circ \gamma = L C W_+ = Lb_+^{-1} b_+ CW_+  = \rho (\lambda)C B_+ W_+$
 with $\rho (\lambda) = L(\lambda) b_+ (\lambda)^{-1}$. Since $\rho$ is in $\LISU$, 
 the statement is proven.
\end{proof}
\begin{Remark}
\mbox{}
\begin{enumerate}
\item
 The third condition in $(2)$ of Theorem \ref{thm:charactmono}, 
 that is, $L(\lambda) b_+ (\lambda)^{-1}|_{\l=1}$ 
 has unimodular eigenvalues, is purely local, since in general the 
 eigenvalues of $L(\lambda) b_+ (\lambda)^{-1}$ on 
 $\l \in \mathbb{S}^1$
 are not unimodular, see Remark \ref{rm:eigenvalues}. 
 \item We will apply this result to the construction of 
 equivariant minimal surfaces with a complex period elsewhere.

 \item Note, the case just discussed can only happen, 
 if there exist several ``monodromy matrices'' $M(\gamma,\lambda)$
 and ``gauges'' $T_+(\gamma,z,\lambda)$ satisfying
 $ C(\gamma. z, \lambda) = M (\gamma, \lambda) C (z, \lambda) T_+ (\gamma, z, \lambda)$.
 In particular, the isotropy group of the dressing action is ``non-trivial'' at  the surface determined by $C(z,\lambda)$.
\end{enumerate}
\end{Remark}
\subsubsection{The case of $z \in \mathcal I_\omega$}
 This case is similar to the case of $ z \in \mathcal I_e$.
 We again consider some potential $\eta$, which is (say up to a gauge) invariant under $\gamma$
\[
 \eta \circ \gamma = \eta \# W_+,
\]
 where $W_+ : \mathbb D \to \LSLP$.
 Then any solution $C(z, \lambda)$ to 
\[
 d C = C \eta
\]
 with some initial condition $C(z = z_0, \l) \in \LSL, \; z_0 \in 
 \mathcal I_\omega$
 satisfies 
\begin{equation}\label{eq:Cgamma2}
 C(\gamma. z, \lambda) = L (\gamma, \lambda) C(z, \lambda) W_+(\gamma, 
 z, \lambda)
\end{equation}
 for some $L \in \LSL$. If $L \in \LISU$, then the Iwasawa decomposition $C = \tilde{F} \omega_0 \tilde{V}_+$
 implies 
\[
 \tilde{F}( \gamma. z, \overline{\gamma. z}, \lambda ) \omega_0 = L(\gamma, \lambda) \tilde{F}(z, \bar z, \lambda) \omega_0  H_+(z, \bar z,\lambda)
\]
for some matrix $H_+$. But since we have assumed $L$ to be in $ \LISU$, we obtain
$H_+ \in  \LISU$, whence $H_+(z, \bar z, \lambda ) = k(\gamma, z, \bar z)$
 for some diagonal matrix $k \in \Uone$.

 In general one will obtain $L \notin \LISU$.
 Then the formula just above can not be obtained. So it seems impossible to obtain a symmetry associated with the action of $\gamma$.
 However, in some cases a symmetry $(\gamma, \rho)$ does exist
 (see for example \cite{DH:periodic}).
 Then in addition to \eqref{eq:Cgamma2} we also have 
\[
 C(\gamma. z, \lambda) = \rho (\gamma, \lambda) C (z, \lambda) Q_+ (\gamma, z, \lambda),
\]
with $\rho (\gamma, \lambda) \in \LISU$.
 Then 
\[
  L(\gamma, \lambda)^{-1} \rho (\gamma, \lambda) C (z, \lambda) = C (z, \lambda) W_+ (\gamma, z, \lambda)
 Q_+ (\gamma, z, \lambda)^{-1}.
\]
Since we consider surfaces defined on $ \mathcal I_\omega$ we choose a base point 
$z_0 \in \mathcal I_\omega$ such that $C(z_0,\lambda) = \omega_0$.
 Putting $z = z_0$ in the last equation above yields 
 \[
  L(\gamma, \lambda)^{-1}\rho(\gamma, \lambda)  \omega_0 =
   \omega_0 W_+ (\gamma, z_0, \lambda) Q_+ (\gamma, z_0, \lambda)^{-1}.
 \]
 As a consequence, setting $b =  \omega_0 W_+ (\gamma, z_0, \lambda) Q_+ (\gamma, z_0, \lambda)^{-1} \omega_0^{-1}$, we derive
\[
 \rho (\gamma, \lambda) = L(\gamma, \lambda) b (\gamma, \lambda)  \in \LISU
\]
 and 
\[
 b(\gamma, \lambda) C (z, \lambda) = C(z, \lambda) B_+ (\gamma, z, \lambda)
\]
 with $B_+ (\gamma, z, \lambda) = W_+ (\gamma, z, \lambda) Q_+(\gamma, z, \lambda)^{-1}$ and 
 \[
 \omega_0^{-1}  \; b  \; \omega_0  \in \LSLP.
 \]
\begin{Theorem}\label{thm:charactmono2}
 Assume $\eta$ is a potential for a  minimal surface in $\Nil$ and 
 satisfies 
\[
 \eta\circ \gamma = \eta \# W_+
 \]
 for some $W_+ \in \LSLP$, $\gamma \in \Aut (\mathbb D)$ and 
 where $\#$ denotes gauging. 
 Then for the solution to $d C  = C \eta, 
 C(z_0, \lambda) = \omega_0$ for some fixed base point 
 $z_0 \in  \mathcal I_\omega$ we obtain
 \[
  \gamma^* C = L C W_+,
 \]
 where $L \in \LSL$. 
 Moreover, the following statements are equivalent$:$
\begin{enumerate}
 \item There exists a $\rho \in \LISU$  such 
 that $(\gamma, \rho)$ is a symmetry of the minimal surface in $\Nil$
 associated with $\eta$. 
  \item There exists some $b\in \LSL$ such that the following conditions are satisfied$:$
  \begin{enumerate}
  \item $\omega_0^{-1} b \omega_0 \in \LSLP$,
  
  \item $Lb \in  \LISU$, 
  
  \item $bC = C B_+$ for some  $B_+ (z, \lambda) \in \LSLP$, 
  
  \item $L(\lambda) b(\lambda )|_{\lambda = 1}$  has unimodular eigenvalues.
  \end{enumerate}
 \end{enumerate}
\end{Theorem}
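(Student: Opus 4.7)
The proof proposal follows the same two-step skeleton as the $\mathcal{I}_e$ version of Theorem \ref{thm:charactmono2}, with modifications dictated by the base point lying in the second Iwasawa cell. First I would establish the transformation law $\gamma^{*}C = L C W_{+}$. Since $\eta\circ\gamma = \eta\# W_{+}$, the two matrix-valued maps $\gamma^{*}C$ and $CW_{+}$ both satisfy the linear system $dZ = Z\eta$; hence they differ by a $z$-independent left factor $L(\gamma,\lambda)\in\LSL$, and this identity does not depend on where the base point sits.

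For the necessity direction $(1)\Rightarrow(2)$, assume $(\gamma,\rho)$ is a symmetry of the surface generated by $\eta$, with $\rho\in\LISU$. The corresponding statement on the level of $C$ reads $\gamma^{*}C = \rho C Q_{+}$ for some $Q_{+}\in\LSLP$ (this is extracted, as in the $\mathcal{I}_e$ case, from Theorem \ref{thm:symmetry} together with the Iwasawa decomposition $C = \tilde F\omega_{0}\tilde V_{+}$ on $\mathcal{I}_{\omega}$). Comparing with $\gamma^{*}C = LCW_{+}$ gives $L^{-1}\rho C = CW_{+}Q_{+}^{-1}$, and inserting $z = z_{0}$ where by assumption $C(z_{0},\lambda)=\omega_{0}$ yields
\[
L^{-1}\rho\,\omega_{0} \;=\; \omega_{0}\,W_{+}(\gamma,z_{0},\lambda)\,Q_{+}(\gamma,z_{0},\lambda)^{-1}.
\]
Setting $b := \omega_{0}W_{+}(\gamma,z_{0},\lambda)Q_{+}(\gamma,z_{0},\lambda)^{-1}\omega_{0}^{-1}$, the right side rewritten as $\omega_{0}\cdot(\omega_{0}^{-1}b\omega_{0})$ shows $\omega_{0}^{-1}b\omega_{0}\in\LSLP$, giving (a); by construction $Lb=\rho\in\LISU$, giving (b); the relation $L^{-1}\rho C=CW_{+}Q_{+}^{-1}$ becomes $bC = CB_{+}$ with $B_{+} = W_{+}Q_{+}^{-1}\in\LSLP$, giving (c); and $Lb|_{\lambda=1}=\rho(\gamma,1)\in\ISU$ has unimodular eigenvalues, giving (d).

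For the sufficiency direction $(2)\Rightarrow(1)$, define $\rho := Lb$. Condition (b) places $\rho$ in $\LISU$, and (c) rewritten as $b^{-1}C = CB_{+}^{-1}$ gives
\[
\gamma^{*}C \;=\; LCW_{+} \;=\; \rho\,b^{-1}CW_{+} \;=\; \rho\,C\bigl(B_{+}^{-1}W_{+}\bigr),
\]
with $B_{+}^{-1}W_{+}\in\LSLP$. Thus $\rho$ is a bona-fide monodromy matrix for $C$ on $\mathcal{I}_{\omega}$. The Iwasawa decomposition $C = \tilde F\omega_{0}\tilde V_{+}$ on $\mathcal{I}_{\omega}$ together with uniqueness (the leading term of $\tilde V_{+}$ is positive real while any residual diagonal $\Uone$-ambiguity is unitary, hence trivial, as in the proof of Theorem \ref{thm:invariantcase1}) yields $\tilde F(\gamma.z)\omega_{0} = \rho\,\tilde F(z)\omega_{0}$. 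Applying condition (d) and Theorem \ref{thm:symmetry}(b) to the frame $\tilde F\omega_{0}$ via the Sym formula \eqref{eq:symNil} produces the isometry $\rho$ of $\Nil$ that closes the symmetry diagram.

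The only substantive obstacle, compared with the $\mathcal{I}_{e}$ version, is bookkeeping around the $\omega_{0}$ factor: the conjugated plus-condition $\omega_{0}^{-1}b\omega_{0}\in\LSLP$ replaces $b_{+}\in\LSLP$, and the Iwasawa-uniqueness argument must be carried out with the frame $\tilde F\omega_{0}$ rather than $F$. Once this is noted, every step is the $\omega_{0}$-conjugate of the corresponding step in the first case, and the positivity/unitarity normalizations force the residual diagonal gauge to collapse to the identity.
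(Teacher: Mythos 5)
Your proposal is correct and follows essentially the same route as the paper: the necessity direction reproduces the paper's derivation of $b = \omega_0 W_+(\gamma,z_0,\lambda)Q_+(\gamma,z_0,\lambda)^{-1}\omega_0^{-1}$ from evaluating at the base point $C(z_0,\lambda)=\omega_0$, and the sufficiency direction is the same computation $\gamma^*C = LCW_+ = \rho\, C(B_+^{-1}W_+)$ with $\rho = Lb \in \LISU$. Your added detail on extracting $\tilde F(\gamma.z)\omega_0 = \rho\,\tilde F(z)\omega_0$ from uniqueness of the second-cell Iwasawa decomposition and then invoking Theorem \ref{thm:symmetry}(b) merely makes explicit a step the paper leaves implicit.
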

\begin{proof}
 From the discussion above, the necessary part is clear.
 Thus we only need to prove sufficiency. But
 $C \circ \gamma = L C W_+ = Lb b^{-1} CW_+  = \rho (\lambda)C B_+^{-1} W_+$
 with $\rho (\lambda) = L(\lambda) b (\lambda)$, where we have used that item (b) above also holds for $b^{-1}$ and $B_+^{-1}$. Since $\rho$ is in $\LISU$, 
 the statement is proven.
\end{proof}

\section{Minimal cylinders}\label{sc:Mincyl}
The construction method for minimal surfaces in $\Nil$ outlined above applies to all minimal surfaces in $\Nil$ which
have a non-trivial fundamental group. The case of a trivial fundamental group has already 
been discussed in \cite{DIKAsian}. 

For most subclasses of minimal surfaces in $\Nil$, 
as generally for all (sub-)classes of ``integrable surfaces'', a thorough discussion usually requires additional and special techniques.
Most of the rest of this paper is devoted to a discussion of 
 ``equivariant'' minimal surfaces in $\Nil$.
This also includes the class of homogeneous surfaces mentioned in the next section.

Another natural class of surfaces consists of all minimal cylinders in $\Nil$.
A  thorough discussion of this class of minimal surfaces in $\Nil$ would go beyond the scope of this paper, but will be presented in  \cite{DK:cylNil}.

In this section we will present an example of a non-equivariant minimal cylinder in $\Nil$.
We have proven mathematically all the required properties 
(in particular the closing conditions for the period) in \cite{DK:cylNil}, but will  point out
here only the basic data and show some pictures computed following the loop group method presented in this paper.

\begin{Example}[A minimal cylinder in $\Nil$]\label{ex:cyl}
Let $\zeta$ be the  holomorphic potential, defined on $\C$,
\begin{equation}\label{eq:zetapot}
 \zeta(z, \l) = 
\l^{-1}
\begin{pmatrix} 
0 & \overline{v(\bar z)} \\
 - v(z) & 0  
 \end{pmatrix} dz
+ 
\l \begin{pmatrix}
 0 & - \overline{v(\bar z)}\\ 
 v(z) & 0 \end{pmatrix} dz,
\end{equation}
 where 
\Red{
\[
 v(z) = \frac{1- i \sin z}{(-i + \sin z)^2}.
 \]
}
  Clearly, the scalar function $v$, and consequently  the one-form $\zeta(z, \l)$,  are invariant
 under the transformation $z \mapsto z + p$, where $p $ is any integer multiple of 
 $2 \pi$. For our goal of constructing a minimal cylinder in $\Nil$ we consider this potential to have the period $p = 2 \pi$.

 Let us consider the solution $d C = C \zeta$ with $C(0, \l) = \id$.
 Then $C_0(z) =  C (z, \l =1)$ is given by 
\[
 C_0(z) = \id. 
\]
Note, that $C_0(z + p ) = C_0(z)$ holds for all $ z \in \C$.

 Since $\zeta$ takes values in $\Lisu$ for $z \in \R$, it is easy  to verify that for real $z$ the matrix function 
  $C (z, \lambda)$ is, up to a diagonal gauge,  an
 extended frame of  some minimal surface $f$ in $\Nil$.
 Moreover, one can verify that the matrix function $C(z, \lambda)$ defined above satisfies
 \begin{equation}
 C (z + p, \l) = M(\l) C(z, \l)
 \end{equation}
 with $M(\lambda) \in \LISU$ for $\lambda \in \mathbb S^1$  and 
\begin{equation}
 M(\lambda = 1) = C_0(p)= \id.
\end{equation}
 Now a straightforward computation  shows  $X^o|_{\l=1} = 0$ and 
 $Y^d|_{\l=1} = 0$, respectively.
 This proves that the minimal surface in $\Nil$ constructed by the potential stated 
 above yields, for $\lambda = 1$, a minimal cylinder in $\Nil$.
 This fact is illustrated by the following pictures:
\begin{figure}[htbp]
  \begin{center}
    \begin{tabular}{c}

      \begin{minipage}{0.44\hsize}
        \begin{center}
 \includegraphics[width=1.2\textwidth]{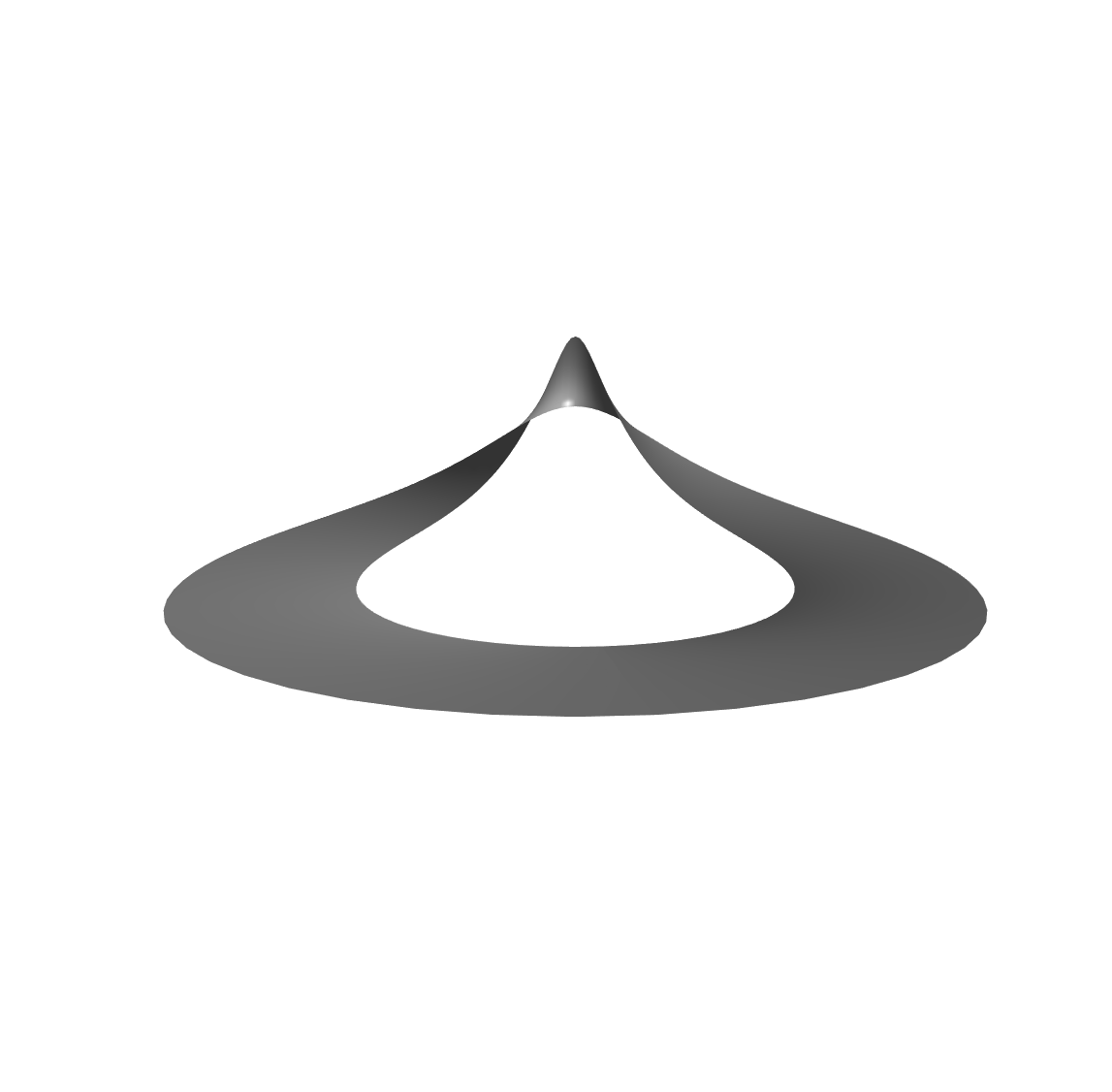}

        \end{center}
      \end{minipage}

      \begin{minipage}{0.56\hsize}
        \begin{center}
 \includegraphics[width=0.7\textwidth]{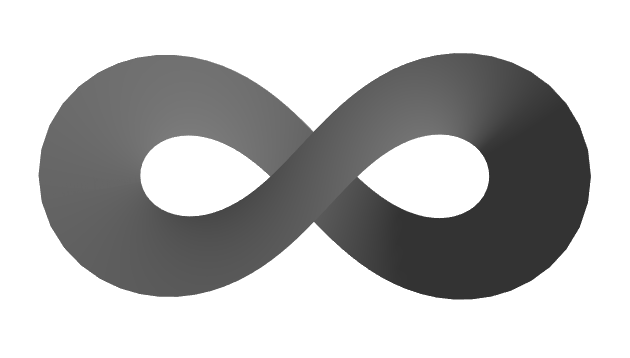}
        \end{center}
      \end{minipage}

    \end{tabular}
    \caption{Two views of the same minimal cylinder in $\Nil$ from 
  the hermitian potential $\zeta$ given in \eqref{eq:zetapot}.
  The right-hand side picture is a rotation view of the left-hand side picture. 
 The figures are 
  made from MATLAB program of the loop group construction outlined 
  in Appendix \ref{app:Loop} programmed by David Brander 
 (Technical University of Denmark).}
    \label{fig:cylinder}
  \end{center}
\end{figure}

 Finally we point out that the minimal cylinder just constructed is 
 not equivariant,
 since the Abresch-Rosenberg differential of the surface is 
 $4 (\cos^2 z  + \sin^2 3 z)dz^2$ which has zeros on $\C$ while
 it is constant on $\C$ for the equivariant case. 
\end{Example}

\section{Homogeneous minimal surfaces in $\Nil$}\label{section3} 
 The homogeneous minimal surfaces in $\Nil$ were classified in 
  Appendix B of \cite{DIKAsian}. For the sake of completeness we 
 recall this result.
 \subsection{Classification of homogeneous minimal surfaces} 
 A surface $f : M \to \Nil$ is called \textit{homogeneous} 
 if  there exists an injectively immersed  Lie group  $G \subset \isoo$
 which acts transitively on $f(M)$. 
 
 Since $\isoo$ acts transitively on all of $\Nil$, clearly
 $G \neq  \isoo$. If $\dim G =3$, 
 then, for every point in $f(M)$, there exists a $1$-dimensional 
 isotropy group. After left translation by same element in $\Nil 
 \subset \isoo$, we can assume that $f(M)$
 contains some element $c$ of the center of $\Nil$ 
 and we take this element as our base point. Since $\Nil$
 is normal in $\isoo$ one can write every 
 $h \in \iso$ in the form $h = p \phi$ where 
 $p \in \Nil$ and $\phi \in \mathrm{U}_1$ as we have used in 
 the proof of Theorem \ref{thm:symmetry}. 
 We obtain $c = h (c) = p c $, whence $p =\mathrm{id}$.
 This shows that the isotropy group is $\Uone$ and we can assume without
 loss of generality
 that $G$ contains a $2$-dimensional subgroup $G_0 \subset \Nil$ 
 which already acts transitively.
 A simple argument with Lie algebras shows that there is, 
 up to conjugacy, 
 exactly one $2$-dimensional subgroup permitting conjugacy 
 by elements of $\isoo$.
 
 Finally, assume that we have some $2$-dimensional 
 subgroup $G \subset \isoo$ which acts transitively 
 on  some minimal surface $f(\Rim)$ in $\Nil$. We can assume again that 
 $f(\Rim)$ in $\Nil$ contains an element $c \in \cNil$ and that $G $ 
 is not contained in $\Nil$.
\begin{Proposition}
 Homogeneous surfaces in $\Nil$ are congruent to one of the following surfaces$:$
\begin{enumerate}
\item 
An orbit of a normal subgroup 
\[
G(t)=\{(x_1,tx_1,x_3)\in \Nil\ \vert \;x_1,x_3\in \mathbb{R}\} 
\subset \Nil,
 \]
or
\[
G(\infty)=\{(0,x_2,x_3)\in \Nil\ \vert \; x_2,x_3\in \mathbb{R}\}
 \subset \Nil. 
\]
\item
An orbit of the Lie subgroup 
\[
\{((0, 0, s), e^{it}) \ \vert \  s, t \in \mathbb{R}\} \subset \Nil\rtimes \mathrm{U}_1.
\]
\end{enumerate}
In the former case, surfaces are vertical planes. Surfaces in the latter case are 
Hopf cylinders over circles. Thus the only homogeneous minimal surfaces in $\Nil$ are vertical planes. 
In particular the quadratic differential $B$
 vanishes identically on homogeneous surfaces.
\end{Proposition}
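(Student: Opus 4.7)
The plan is to complete the case analysis already begun in the paragraph preceding the statement, classifying all connected Lie subgroups $G\subset\isoo$ that can act transitively on a surface in $\Nil$, and then to check minimality case by case. Since $\isoo$ itself acts transitively on the four-dimensional manifold $\Nil$, a two-dimensional orbit is possible only if $\dim G\in\{2,3\}$; after a left translation by an element of $\Nil$, we may moreover assume that the orbit meets $\cNil$ at the identity.

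For $\dim G=3$, the sketch in the excerpt already reduces $G$ to the semidirect form $G_0\cdot\Uone$ with $G_0\subset\Nil$ a two-dimensional connected subgroup. At the Lie algebra level, any 2-dim subalgebra $\mathfrak h\subset\mathfrak{nil}_3$ must contain $\R e_3$: otherwise two vectors in $\mathfrak h$ with linearly independent $(e_1,e_2)$-parts would have a non-zero bracket in $\R e_3$, forcing $\dim\mathfrak h\geq 3$. Hence $\mathfrak h$ is determined by a line in $\mathfrak{nil}_3/\R e_3\cong\R^2$, giving exactly the subgroups $G(t)$, $t\in\R\cup\{\infty\}$, whose orbits through the identity are the vertical planes of item~(1).

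For $\dim G=2$, I would first rule out non-abelian 2-dim subalgebras of the Lie algebra of $\isoo$: writing a hypothetical relation $[u,v]=v$ in the basis $E_1,E_2,E_3,E_4$ and using $[E_4,E_1]=E_2$, $[E_4,E_2]=-E_1$, $[E_1,E_2]=E_3$, one obtains $v_4=0$ together with $-u_4 v_2=v_1$ and $u_4 v_1=v_2$, whence $(1+u_4^2)v_2=0$ and finally $v=0$. Every 2-dim subgroup is therefore abelian, generated by two commuting one-parameter subgroups drawn from the list of Theorem~\ref{thm:oneparameter}. If both generators lie in $\Nil$ one falls back into item~(1); otherwise, after translating the vertical rotation axis to $\{x_1=x_2=0\}$ and rotating by an element of $\Uone$, the subalgebra becomes $\R E_3\oplus\R E_4$, yielding item~(2). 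This conjugacy reduction is where I expect the real work to lie, since it rests on the full list in Theorem~\ref{thm:oneparameter}.

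It remains to identify which orbits are minimal and verify $B\equiv 0$. A vertical plane is a left coset of one of the $G(t)$, and is classically totally geodesic in $\Nil$, hence minimal. Using the spinor formula from \cite{DIKAsian}, $B$ on a vertical plane vanishes because the angle function $\langle N,E_3\rangle$ is identically zero: the tangent plane contains $\partial_3 = E_3$ by construction, so the unit normal is horizontal and perpendicular to $E_3$. In contrast, a Hopf cylinder over a circle of radius $r$ is a standard CMC surface in $\Nil$ with non-zero mean curvature and therefore not minimal. Combining these verifications with the preceding steps gives the stated classification.
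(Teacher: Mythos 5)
Your overall strategy coincides with the paper's (and with Appendix B of \cite{DIKAsian}, which the Proposition merely recalls): reduce to a dimension/isotropy analysis of connected subgroups of $\isoo$, classify the two-dimensional subalgebras, and use the one-parameter group classification to normalize the non-nilpotent case. The algebraic steps you add --- that every two-dimensional subalgebra of $\mathfrak{nil}_3$ contains $\R e_3$, that every two-dimensional subalgebra of $\mathfrak{iso}(\mathrm{Nil}_3)$ is abelian (your computation $(1+u_4^2)v_2=0$ is correct), and the conjugation of the mixed case to $\R E_3\oplus\R E_4$ via Lemma \ref{lm:rhophi} --- are sound and genuinely fill in details the paper leaves implicit.

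Two of your final verifications, however, do not hold as stated. First, a vertical plane in $\Nil$ is \emph{not} totally geodesic: for the plane $x_2=0$, with tangent frame $E_1,E_3$ and unit normal $E_2$, one has $\langle\nabla_{E_1}E_3,E_2\rangle=-\tfrac12\neq 0$; in fact $\Nil$ admits no totally geodesic surfaces at all. Minimality still holds, but only because the diagonal entries $\langle\nabla_{E_1}E_1,E_2\rangle$ and $\langle\nabla_{E_3}E_3,E_2\rangle$ both vanish, so it is the trace computation, not total geodesy, that you need. Second, the vanishing of $B$ is not a consequence of the contact angle being $\pi/2$. By definition $B=\tfrac14(2H+i)\bigl(A+\phi_3^2/(2H+i)\bigr)$, and on the vertical plane neither $A$ nor $\phi_3$ vanishes: with the conformal coordinate $z=x_1+ix_3$ one finds $\phi_3=-i/2$ and $A=-i/4$, and $B\equiv 0$ holds because these two contributions cancel exactly. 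That cancellation is a short explicit computation (or a citation to \cite{DIKAsian}), not something that follows from $h\equiv 0$ alone. (A further small slip: $\Nil$ is three-dimensional; it is $\isoo$ that has dimension four.) With these points repaired your argument is complete and follows the same route as the paper.
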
 

\begin{Remark}
\mbox{}
\begin{enumerate}
 \item Note that  part $(1)$ follows from \cite{DIKAsian} and 
 part $(2)$ follows from  Theorem \ref{thm:oneparameter} below.

\item The homogeneous minimal surfaces in $\Nil$ are exactly 
 those minimal surfaces
 in $\Nil$ for which the function $w$ in \eqref{def-exp(w/2)}
 cannot be defined, that is, they are exactly those minimal 
 surfaces in $\Nil$ for which 
 the loop group approach does not work, that is, the case 
 of $B \equiv 0$.
\end{enumerate}
\end{Remark}

\section{Equivariant minimal surfaces in $\Nil$}
 In this section we will discuss minimal surfaces in $\Nil$ 
 which possess a one-parameter group of symmetries. 
 We begin by stating the following basic definition.
 
\begin{Definition}\label{df:equivariant}
 Let $f: \Rim \rightarrow \Nil$ be a surface. Then $f$ 
 is called \textit{equivariant}, if there exists a 
 pair of one-parameter groups 
 $(\gamma_t, \rho_t) \in  \Aut (\Rim) \times \isoo$ such that
\begin{equation}\label{eq:equivariance}
f \circ \gamma_t = \rho_t \circ f
\end{equation}
 holds.
\end{Definition}
 In Theorem \ref{thm:X1}, we will show that if 
 a minimal surface $S \subset \Nil$ is invariant 
 under a one-parameter group $\rho_t \in \isoo$, $\rho_t. S = S$, 
 there exists a special Riemann surface $\St$, an immersion $f: \St \to \Nil$
 with $f(\St) =S$ and a one-parameter group 
 $\gamma_t \in \Aut (\St)$ such that $f$ is
 equivariant in the sense of \eqref{eq:equivariance} 
 with respect to $(\gamma_t, \rho_t)$.

\subsection{One-parameter groups of $\isoo$}
To carry out our study of equivariant minimal surfaces
we will need a more detailed description
of the isometry group $\isoo$.
By  definition, each element of the isometry group $\isoo = \Nil \rtimes \Uone$ is of the form $((a_1, a_2, a_3), e^{i\theta})$.
Recall the group multiplication
\begin{equation*}
 (a_1,a_2,a_3) \cdot (x_1,x_2,x_3) =  \left(a_1 + x_1, a_2 + x_2, a_3 + x_3 + 
 \frac{1}{2} (a_1 x_2 - a_2 x_1)\right)
 \end{equation*}
of $\Nil$ and the action of $\isoo$ on $\Nil$:
\[
((a_1,a_2,a_3),e^{i\theta})\cdot (x_1,x_2,x_3)=
(a_1,a_2,a_3)\cdot (\cos \theta{x}_1-\sin\theta{x}_2,\sin\theta{x}_1+\cos\theta{x}_2,x_3).
 \]
Note, the isometry $((0,0,0),e^{i\theta})$ acts on $\Nil$ 
as a homomorphism of groups. It will be convenient to introduce a 
 ``shorthand writing''}
 for certain typical group elements. We will use
\[
 \alpha \equiv ((a_1, a_2,0),1), \quad \mathfrak{c} \equiv ((0,0, \mathfrak{c}),1), \quad e^{i\theta} \equiv ((0,0,0),e^{i\theta}).
\]
Then everything is expressed in terms of $\alpha = (a_1,a_2) = a_1 + i a_2, \mathfrak{c}$ and $e^{i\theta}$. In particular we have:
 Each element $\rho$ of $\isoo$ can be written uniquely in the form
\[
\rho = \alpha \mathfrak{c} e^{i\theta}. 
\]
Here is the list of the multiplications of the basic generators
with respect to the semi-direct product group structure introduced above:
\begin{enumerate}
\item The group of all $\cc$  is a one-dimensional group isomorphic to $\R$.
\item The group of all ${e^{i\theta}} $ is a one-dimensional group isomorphic to $\mathbb S^1$.
\item The centralizer of $\isoo$ consist exactly of all $\mathfrak{c}$.
\item For $\alpha, \beta \in \C \cong \R^2$, 
 $\alpha \beta = (\alpha+\beta) \mathfrak{c} (\alpha,\beta)$
 holds, where $\mathfrak{c}(\alpha, \beta) = \frac{1}{2}
\Im(\bar{\alpha} \cdot \beta)$ and 
 ``$\;\cdot \;$'' denotes the multiplication of the 
 complex numbers $\bar{\alpha}$ and $\beta$.
\item For $\beta \in \C \cong \R^2$, 
 $e^{i\theta} \beta=( e^{i \theta}\cdot \beta ) e^{i\theta}$,
 where ``$\;\cdot \;$'' again denotes the multiplication 
 of the complex numbers $\beta$ and $e^{i\theta}$.
\end{enumerate}
Putting this all together, one can easily verify
\[
(\alpha \mathfrak{c} e^{i\theta}) (\beta \mathfrak{d} e^{i\tau}) 
 = (\alpha + e^{i\theta}\cdot\beta) 
\left(\mathfrak{c} + \mathfrak{d} 
 + \frac{1}{2} \Im (\bar{\alpha} \cdot e^{i\theta} \cdot \beta) \right)
e^{i(\theta + \tau)}.
\]
Note that the identity element in $\isoo$ is $1 = ((0,0,0),1)$ 
and
\begin{equation}
(\alpha \mathfrak{c} e^{i\theta})^{-1} =e^{-i\theta} (-\mathfrak{c}) (-
 \alpha ) = (- e^{-i\theta} \cdot \alpha)
(-\mathfrak{c}) e^{-i\theta}.
\end{equation}
 Finally for $a = \alpha \cc \in \Nil$, we have
 $ e^{i \theta} a = (e^{i \theta} \cdot \alpha) \cc e^{i \theta}$ and denotes it by  
\begin{equation}\label{eq:rotationlaw}
 e^{i \theta} a = e^{i \theta}[a] e^{i \theta},
\end{equation}
 that is, $e^{i \theta}[a] = (e^{i \theta} \cdot \alpha) \cc$.
 In particular $e^{i \theta}[\cc] = \cc$ follows.
Finally we mention  that the one-parameter  group 
 $\rho_\theta\in \isoo$ generated by the Killing 
 vector field 
 $E_4=-x_2\frac{\partial}{\partial x_1}+
 x_1\frac{\partial}{\partial x_2}$ consists of 
 \textit{rotations} $\rho_\theta=((0,0,0),e^{i\theta})$ 
 of angle $\theta$ about the $x_3$-axis. 
 In our shorthand writing this is $\rho_\theta = e^{i\theta}$.

 An isometry $\rho^{(\cc)}_t\in \mathrm{Nil}_{3}
 \rtimes \mathrm{U}_1$ of the form  
\begin{equation*}
 \rho_{t}^{(\cc)} = (\cc t) e^{it} = ((0,0,\cc t),e^{it}),
\end{equation*} 
 where $\cc \in \cNil, t \in \R$, is called a  
 \textit{helicoidal motion with pitch $\cc$}. 
 By what was said above it is clear that this motion moves the points 
 in $\Nil$ along the $e_3$-axis $\R e_3$ and rotates them about this 
 axis simultaneously.
 The family of all transformations $\rho^{(\cc)}_t$ forms 
 for fixed $\cc$ a one-parameter group.
 In general, a \textit{helicoidal motion}  
 along the axis $\alpha+ \R e_3$ through the point 
 $ \alpha = (a_1 + i a_2, 0)
 =(a_1, a_2, 0) \in \R^2 \subset \Nil$ 
 and with pitch $\cc$
 has the form:
 \begin{equation} 
 \rho_t^{(\cc,\alpha)} =\alpha \{ ( t \cc) e^{it} \} \alpha^{-1} 
 = ( t \cc)\{ \alpha  e^{it} \} \alpha^{-1} \in \isoo.
 \end{equation}
  Clearly, the transformations $ \rho_t^{(\cc,\alpha)} \; (t \in \R)$ 
 form a one-parameter group. Moreover, a simple computation yields 
 the natural and unique representation:
 \begin{equation}\label{eq:helicoidalmotion}
  \rho_t^{(\cc,\alpha)}  =   (\alpha \cdot (1 - e^{it})) \left( \cc t -\frac{|\alpha|^2}{2} \sin t\right)  e^{it}.
\end{equation}
 A \textit{translation motion} $\rho_t \in \Nil$ in direction 
 $(a_1, a_2, \cc) \in \Nil$ is given by 
 \begin{equation}\label{eq:translationmotion}
\rho_t =  (t\alpha) ( t\mathfrak{c}) \in \isoo.
\end{equation}

 
%
 In general one can consider \textit{any} one-parameter group, not only
 a translation motion nor only a helicoidal motion along the axes 
 $\alpha + \R e_3, \alpha = a^h$.
 However, the following Theorem \ref{thm:oneparameter} 
 implies that actually any one-parameter group which is not given by 
 translations can be interpreted as a helicoidal motion, 
 (for example \cite[Theorem 2]{FMP}).
\begin{Lemma}\label{lm:rhophi}
 Let $\rho = p \phi \in \isoo$ with 
 $p = \pi_0 \mathfrak{p}_c$, where $\pi_0 \in \R^2$,
 $\mathfrak{p}_c \in \cNil$ and $\phi = e^{i q } \in \Uone$ 
 for some $q \notin 2 \pi \mathbb{Z}$. Then $\rho$ can be represented 
 uniquely in the form
\[
 \rho = \mathfrak{c} \alpha \phi \alpha^{-1}
\] 
 for some $ \alpha  \in \R^2 \subset \Nil$ 
 and $\mathfrak{c} \in \cNil$.
 \end{Lemma}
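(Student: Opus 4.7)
The plan is to reduce the claim to solving an explicit system of two equations for the unknowns $\alpha \in \C \cong \R^2$ and $\mathfrak{c} \in \cNil$, using the shorthand multiplication laws collected just before the lemma. First I would expand $\mathfrak{c}\,\alpha\phi\alpha^{-1}$ in the normal form $\beta\,\mathfrak{d}\,e^{iq}$ prescribed by the decomposition $\isoo = \Nil\rtimes\Uone$. Since $\mathfrak{c}\in\cNil$ commutes with every element of $\isoo$, it can be pulled past $\alpha$ and $e^{iq}$ without cost, so the only real computation is $\alpha\phi\alpha^{-1}$.

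For that computation I would apply rule (5) (and $\alpha^{-1} = -\alpha$ for $\alpha \in \R^2 \subset \Nil$) together with the master multiplication formula
\[
(\alpha\,\mathfrak{c}\,e^{i\theta})(\beta\,\mathfrak{d}\,e^{i\tau}) = \bigl(\alpha + e^{i\theta}\!\cdot\!\beta\bigr)\Bigl(\mathfrak{c}+\mathfrak{d}+\tfrac12\Im(\bar\alpha\!\cdot\!e^{i\theta}\!\cdot\!\beta)\Bigr) e^{i(\theta+\tau)}.
\]
Applied twice this yields $\alpha\phi\alpha^{-1} = \alpha(1-e^{iq})\cdot\bigl(-\tfrac{|\alpha|^2}{2}\sin q\bigr)\cdot e^{iq}$, so after left-multiplication by $\mathfrak{c}$ the right-hand side of the desired identity takes the explicit normal form
\[
\mathfrak{c}\,\alpha\phi\alpha^{-1} \;=\; \alpha(1-e^{iq})\;\Bigl(\mathfrak{c}-\tfrac{|\alpha|^2}{2}\sin q\Bigr)\;e^{iq}.
\]

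Matching this against the given decomposition $\rho = \pi_0\,\mathfrak{p}_c\,e^{iq}$ coordinate by coordinate in $\R^2 \times \cNil \times \Uone$ produces the two equations
\[
\alpha(1-e^{iq}) = \pi_0, \qquad \mathfrak{c} - \tfrac{|\alpha|^2}{2}\sin q = \mathfrak{p}_c.
\]
The hypothesis $q\notin 2\pi\Z$ guarantees $1 - e^{iq}\neq 0$, so the first equation determines $\alpha = \pi_0/(1-e^{iq})$ uniquely in $\C\cong\R^2$, and then the second equation determines $\mathfrak{c} = \mathfrak{p}_c + \tfrac{|\alpha|^2}{2}\sin q$ uniquely in $\cNil$. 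This simultaneously gives existence and uniqueness.

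The only place where any care is needed is the bookkeeping in the two applications of the multiplication formula and making sure centrality of $\mathfrak{c}$ is invoked to combine the two central factors in the end; everything else is essentially forced by the normal-form decomposition $\isoo = \R^2 \cdot \cNil \cdot \Uone$.
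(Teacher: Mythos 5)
Your proposal is correct and follows essentially the same route as the paper: both compute the normal form of $\mathfrak{c}\,\alpha\phi\alpha^{-1}$ in the decomposition $\R^2\cdot\cNil\cdot\Uone$ (your formula $\alpha\cdot(1-e^{iq})\,\bigl(\mathfrak{c}-\tfrac{|\alpha|^2}{2}\sin q\bigr)e^{iq}$ agrees with the coefficients $w_1,w_2,w_3$ in the paper) and then match against $\rho$. The only difference is that the paper merely asserts that $(a_1,a_2,\mathfrak{c})\mapsto(w_1,w_2,w_3)$ is a diffeomorphism of $\R^3$ using $q\notin 2\pi\Z$, whereas you invert the system explicitly via $\alpha=\pi_0/(1-e^{iq})$, which is a slightly more informative way of establishing the same existence and uniqueness.
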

 
\begin{proof}
 We compute the coefficients of any expression of the form
\[
 \cc \alpha \phi \alpha^{-1}
\]
 with $\cc \in \cNil$, $ \alpha = a^h = a_1 + ia_2$ and $\phi = e^{iq}$.
 Since $\phi$ satisfies \eqref{eq:rotationlaw}, 
 $\phi \alpha = \phi[\alpha] \phi $ and we derive
\[
 \alpha \phi \alpha^{-1} =  \alpha \left(\phi \alpha^{-1} \phi^{-1}\right) \phi = \alpha 
 \phi [\alpha^{-1}] \phi.
\]
 Now a straightforward computation shows 
 that $ w = (\cc \alpha \phi \alpha^{-1}) \phi$ has the coefficients
\begin{align*} 
w_1 &= a_1 - a_1 \cos q + a_2 \sin q, \quad w_2 = a_2 - a_1 \sin q - a_2 \cos q,\quad 
w_3 &= \cc - \frac{1}{2} (a_1^2 + a_2^2)\sin q,
\end{align*}
 where we set $w = (w_1, w_2, w_3) \in \Nil$.
 Using $ q \notin 2 \pi \mathbb{Z}$ it is easy to prove that 
 $(a_1,a_2, \cc) \rightarrow  (w_1,w_2,w_3)$ is a diffeomorphism 
 from $\R^3$ to $\R^3$.
 Therefore the $p$ defined by $\rho$ can be derived from 
 some $(a_1,a_2,\cc)$ and the claim follows.
 \end{proof}

\begin{Theorem}\label{thm:oneparameter}
 Assume $\rho_t$ is a one-parameter group in $\isoo$ 
 which is not contained entirely in $\Nil$. 
 Then with the notation of Lemma {\rm \ref{lm:rhophi}}, $\rho_t$ can be represented in the form
\[
 \rho_t = \cc_t \alpha \phi_t \alpha^{-1},
\] 
 where $\cc_t  =  t \cc \in \cNil, \alpha = a^h \in \Nil$ is 
 independent of $t$, and $\phi_t = e^{itq}$ with $q \neq 0$.
\end{Theorem}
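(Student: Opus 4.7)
The plan is to work at the Lie algebra level: any infinitesimal generator of $\rho_t$ with nontrivial $E_4$-component is $\ad(\alpha)$-conjugate, for a suitable $\alpha \in \R^2 \subset \Nil$, to an element of the abelian subalgebra $\R E_3 \oplus \R E_4$, and the claimed form then drops out of exponentiation.

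First I would observe that every one-parameter subgroup of the Lie group $\isoo$ has the form $\rho_t = \exp(tX)$ for a unique $X \in \mathfrak{iso}(\Nil)$, and write
\[
 X = v_1 E_1 + v_2 E_2 + v_3 E_3 + q E_4
\]
in the basis \eqref{eq:Ej}. The canonical projection $\isoo \to \Uone$ sends $\rho_t$ to $e^{itq}$, so the hypothesis $\rho_t \not\subset \Nil$ is equivalent to $q \neq 0$; this already identifies the rotation factor $\phi_t = e^{itq}$.

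Next, I would reduce the theorem to an algebraic identity. It is enough to produce $\alpha \in \R^2 \subset \Nil$ and $c \in \R$ with
\[
 X = \ad(\alpha)(c E_3 + q E_4),
\]
because $E_3$ is central and $[E_3, E_4] = 0$ give $\exp(t(cE_3 + qE_4)) = \cc_t\phi_t$ with $\cc_t = (0,0,tc)$, so
\[
 \rho_t = \exp(tX) = \alpha\exp\!\bigl(t(cE_3 + qE_4)\bigr)\alpha^{-1} = \alpha\,\cc_t\phi_t\,\alpha^{-1} = \cc_t\,\alpha\phi_t\alpha^{-1},
\]
the last step moving the central element $\cc_t$ past $\alpha$. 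Writing $A = a_1 E_1 + a_2 E_2$, the Baker--Campbell--Hausdorff formula truncates (since $E_3$ is central) and gives $\exp(A) = (a_1, a_2, 0)$, matching the shorthand $\alpha = a^h$.

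The core computation is then to expand $\ad(\alpha) = \exp(\operatorname{ad} A)$ acting on $cE_3 + qE_4$. Using the commutation relations $[E_4, E_1] = E_2$, $[E_4, E_2] = -E_1$, $[E_1, E_2] = E_3$ and the centrality of $E_3$, one gets $\operatorname{ad}(A)(cE_3 + qE_4) = q(a_2 E_1 - a_1 E_2)$ and $\operatorname{ad}(A)^2(cE_3 + qE_4) = -q|\alpha|^2 E_3$, while all higher brackets vanish. Hence
\[
 \ad(\alpha)(cE_3 + qE_4) = q a_2 E_1 - q a_1 E_2 + \left(c - \tfrac{q|\alpha|^2}{2}\right) E_3 + q E_4,
\]
and matching coefficients with $X$ gives the unique solution $a_2 = v_1/q$, $a_1 = -v_2/q$, $c = v_3 + (v_1^2 + v_2^2)/(2q)$, well-defined precisely because $q \neq 0$.

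The main obstacle is really just the bookkeeping in the $\ad$-computation. Since every bracket of two elements of $\{E_1, E_2, E_4\}$ lies in $\operatorname{span}\{E_1, E_2, E_3\}$ and $E_3$ is central, the series for $\operatorname{ad}(A)$ terminates at order $2$, so no analytic subtlety arises. A conceivable alternative would be to apply Lemma \ref{lm:rhophi} at each time $t$ with $tq \notin 2\pi\mathbb{Z}$ and then use the group law $\rho_{s+t} = \rho_s\rho_t$ together with the uniqueness in that lemma to force $\alpha(t)$ to be constant and $\cc(t)$ to be linear in $t$; this works but is more cumbersome than the Lie-algebra route.
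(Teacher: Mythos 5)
Your argument is correct, and it is genuinely different from the one in the paper. The paper first applies Lemma \ref{lm:rhophi} at each time $t$ to get a representation $\rho_t = \cc_t\alpha_t\phi_t\alpha_t^{-1}$, then exploits the group law $\rho_{r+s}=\rho_r\rho_s$ to derive the functional equation \eqref{eq:alpharelation} for $\alpha_t$, differentiates it at $r=0$ and at $s=0$ to obtain \eqref{eq:alphas} and \eqref{eq:alphar}, concludes that $\alpha_t$ is constant, and finally deduces $\cc_r\cc_s=\cc_{r+s}$ — essentially the ``cumbersome alternative'' you sketch in your last paragraph. You instead work at the level of $\mathfrak{iso}(\mathrm{Nil}_3)$: you write $\rho_t=\exp(tX)$, read off $q\neq 0$ from the projection to $\Uone$, and solve $X=\operatorname{Ad}(\alpha)(cE_3+qE_4)$ explicitly, which is possible because $\operatorname{ad}(a_1E_1+a_2E_2)$ is nilpotent of order $3$ on $\mathfrak{iso}(\mathrm{Nil}_3)$ and the resulting linear system in $(a_1,a_2,c)$ is uniquely solvable precisely when $q\neq 0$. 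Your computation is consistent with the paper's explicit formula \eqref{eq:helicoidalmotion} for helicoidal motions (differentiate it at $t=0$ to recover your expression for $\operatorname{Ad}(\alpha)(cE_3+qE_4)$ with $q=1$), which is a good sanity check. What your route buys is brevity, explicit formulas for $\alpha$ and $c$ in terms of the generator, and independence from Lemma \ref{lm:rhophi} (whose diffeomorphism argument you effectively replace by solving a triangular linear system); what it costs is reliance on the standard facts that a continuous one-parameter subgroup is $\exp(tX)$ and that $\operatorname{Ad}(\exp A)=\exp(\operatorname{ad}A)$, plus care with the sign convention relating the bracket of Killing fields in \eqref{eq:Ej} to the bracket of $\operatorname{Lie}(\isoo)$ — a mismatch there would only flip signs in your formulas for $a_1,a_2$, not affect existence or uniqueness.
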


\begin{proof}
 Let $\rho_t$ denote the given one-parameter group. We can write
 $\rho_t = \pi_t \mathfrak{p}_{t} \phi_t$. 
 Assuming without loss of generality $q(0)=0$ this decomposition 
 is unique.
 By definition $\rho_{t +s} = \pi_{t+s} \mathfrak{p}_{t+s} \phi_{t + s}.$  Moreover, 
\[
 \rho_t \rho_s =  \pi_t \mathfrak{p}_{t} \phi_t\pi_s \mathfrak{p}_{s} \phi_s
  = \gamma_{t, s} \mathfrak{h}_{t, s} \phi_t \phi_s. 
\]
 The equality $\rho_{t +s} = \rho_{t} \rho_{ s}$ now
  implies that 
 $\phi_t$ is a one-parameter group. 
 Hence $\phi_t = e^{itq}$ where $q \neq 0$, otherwise $\rho_t$ would be 
 contained entirely in $\Nil$.
 Now we write  $\rho_t = \cc_t \alpha_t \phi_t \alpha_t^{-1}$ as in Lemma \ref{lm:rhophi}. 
 Then
 \[
  \rho_s \rho_r = (\cc_s \alpha_s \phi_s \alpha_s^{-1}) 
 (\cc_r \alpha_r \phi_r \alpha_r^{-1})  = \rho_{r+s}.
 \]
 Using formula $\phi a = \phi[a] \phi$ by \eqref{eq:rotationlaw}, 
 we rephrase the middle term above as
\begin{align*} 
 (\cc_s\alpha_s \phi_s \alpha_s^{-1}) (\cc_r \alpha_r \phi_r \alpha_r^{-1}) &=  
 (\cc_s \alpha_s\phi_s [\alpha_s^{-1}]) (\cc_r \phi_s
[\alpha_r\phi_r [\alpha_r^{-1}]] \phi_{s+r})  \\
&= (\cc_s \cc_r \alpha_s\phi_s [\alpha_s^{-1}]) (\phi_s
 [\alpha_r\phi_r [\alpha_r^{-1}]]  \phi_{r+s}),
 \end{align*}
 where we have also used that $\phi_s [\cc_r] = \cc_r$ holds, 
 since $\phi [\cc] = \cc$ for all $\cc \in \cNil$.
 Comparing this to $\rho_{r+s}$ we observe
 \begin{equation}\label{eq:firstcomponentofa}
 (\cc_r \cc_s  \alpha_s \phi_s [\alpha_s^{-1}])(
 \phi_s[\alpha_r \phi_r [\alpha_r^{-1}]])
 = \cc_{r+s} \alpha_{r+s}\phi_{r+s}[\alpha_{r+s}^{-1}].
\end{equation}
 Recall that $\alpha_t$ has no component in $\cNil$, that is, 
 $\alpha_t = a_t^h$, whence
 $\phi_t[\alpha_t] = e^{iqt} \cdot \alpha_t$. But then $\alpha_{r+s} \phi_{r+s}[\alpha^{-1}_{r+s}] =
 \alpha_{r+s} - e^{iq(r+s)} \cdot \alpha_{r+s}$ modulo $\cNil$ and 
 $(\alpha_s \phi_s[\alpha_s^{-1}])(\phi_s[\alpha_r \phi_r [\alpha_r^{-1}]]) =
 \alpha_s - e^{isq} \cdot \alpha_s + e^{isq} \cdot (\alpha_r - e^{irq} \alpha_r)$ 
 modulo $\cNil$ follows.
 As a consequence we obtain the following equation of complex numbers
 \begin{equation}\label{eq:alpharelation}
 (1- e^{iqs}) \cdot \alpha_s + e^{iqs} \cdot (1- e^{iqr}) 
 \alpha_r = (1 - e^{iq(r+s)}) \cdot 
 \alpha_{s+r}.
\end{equation}
 Differentiating \eqref{eq:alpharelation} for $s$ at $s=0$ we obtain
 $-iq  \cdot \alpha_0 + iq (1-e^{iqr})  \cdot \alpha_r = 
 -iq \cdot e^{iqr} \cdot \alpha_r + (1- e^{iqr}) \cdot \frac{d}{dr} \cdot \alpha_r$.
 This equation simplifies to yield
\begin{equation}\label{eq:alphas}
 iq\cdot ( \alpha_r -  \alpha_0) = (1 - e^{iqr}) \cdot \frac{d}{dr} \alpha_r.
\end{equation}
 Differentiating \eqref{eq:alpharelation} for $r$ at $r=0$, we obtain
 $e^{iqs}(-iq) \alpha_0 = -iqe^{iqs} \cdot \alpha_s + (1 - e^{iqs})  \cdot 
 \frac{d}{ds}\alpha_s$, 
 which simplifies to
\begin{equation}\label{eq:alphar}
 iq e^{iqs}\cdot (  \alpha_s -   \alpha_0) = (1 - e^{iqs}) \cdot \frac{d}{ds} \alpha_s.
\end{equation}
 From \eqref{eq:alphas} and \eqref{eq:alphar}, we obtain that $\alpha_t$ 
 is constant (say equal to $\alpha $). Since now $\alpha = \alpha_r = \alpha_s = \alpha_{r+s}$
 and since also  (\ref{eq:firstcomponentofa}) holds, we obtain
 \begin{equation}
 (\cc_r \cc_s  \alpha \phi_s [\alpha^{-1}])
 (\phi_s[\alpha\phi_r [\alpha^{-1}]])
 = \cc_{r+s} \alpha\phi_{r+s}[ \alpha^{-1}].
 \end{equation}
 Since $\phi_*$ is a homomorphism of $\Nil$, we obtain 
 \[
  (\phi_s[\alpha^{-1}]) (\phi_s[ \alpha\phi_r [\alpha^{-1}]]) =  
 (\phi_s[\alpha^{-1}]) (\phi_s [\alpha]) (\phi_s 
 [\phi_r [\alpha^{-1}]]).
 \]
 Therefore the factors on the right cancel. 
 This implies $\cc_r \cc_s = \cc_{r+s}$ and the claim follows.
\end{proof}
\begin{Remark}
 The theorem above was stated (without proof) 
 in Theorem 2 in \cite{FMP}.
\end{Remark}
 In view of  Theorem \ref{thm:oneparameter} above  we introduce the 
 following definition.
\begin{Definition}\label{def:helicoidal}
 Let $f:\Rim \to \mathrm{Nil}_3$ be a conformal immersion 
 from a Riemann surface $\Rim$ into $\Nil$.
\begin{enumerate}
 \item  $f$ is said to be a 
 \textit{helicoidal surface} if the image $f(\Rim)$ is 
 invariant under a one-parameter group of helicoidal motions  
 $\{\rho_t^{(\cc,\alpha)} \}_{t \in \R}$ 
 as defined in \eqref{eq:helicoidalmotion}, that is, 
\[
 f(\Rim) = \rho_t^{(\cc,\alpha)} . f(\Rim)
\]
 holds for all $t \in \R$. 
 In particular, $f$ is said to be a \textit{rotational surface} if the 
 helicoidal motion does not have a pitch.
\item  
 $f$ is said to be a 
 \textit{translation invariant surface} if the image $f(\Rim)$ is invariant under a one-parameter group of translation motions 
$\{\rho_t\}_{t \in \R}$  defined as in \eqref{eq:translationmotion}, 
 that is, 
\[
 f(\Rim) = \rho_t . f(\Rim)
\]
 holds for all $t \in \R$.
\end{enumerate}
\end{Definition}
 As a corollary of Theorem \ref{thm:oneparameter}, we have the 
 following.
\begin{Corollary}\label{cor:eqminsurf}
The family of equivariant minimal surfaces in the sense of 
 Definition {\rm \ref{df:equivariant}} consists of all minimal
helicoidal surfaces and all minimal translational surfaces.
\end{Corollary}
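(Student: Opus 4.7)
The plan is to prove both inclusions and thereby identify the equivariant minimal surfaces. Suppose first that $f:\Rim \to \Nil$ is an equivariant minimal surface with associated one-parameter groups $(\gamma_t, \rho_t) \in \Aut(\Rim)\times\isoo$. The equivariance $f\circ\gamma_t = \rho_t\circ f$ immediately implies $\rho_t\cdot f(\Rim) = f(\Rim)$, so the image is invariant under $\{\rho_t\}_{t\in\R}$. I now split into the two cases supplied by Theorem \ref{thm:oneparameter}: either $\rho_t$ lies entirely in $\Nil$, or $\rho_t = \cc_t\alpha\phi_t\alpha^{-1}$ with $\cc_t = t\cc$, a fixed $\alpha\in\R^2$, and $\phi_t = e^{itq}$, $q\neq 0$.

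In the first case, I would verify by a direct computation on the Heisenberg multiplication \eqref{eq:Nilmult} that any smooth one-parameter subgroup of $\Nil$ must have the form $\rho_t = (t\alpha)(t\cc)$: writing $\rho_t = (f_1(t),f_2(t),f_3(t))$, the first two coordinates of $\rho_{s+t}=\rho_s\cdot\rho_t$ force $f_1,f_2$ to be linear, and the resulting cocycle term $\tfrac{1}{2}(f_1(s)f_2(t)-f_2(s)f_1(t))$ vanishes by antisymmetry, forcing $f_3$ to be linear as well. This is exactly the translation motion \eqref{eq:translationmotion}, so $f$ is a translation invariant surface. In the second case, the reparametrization $s=tq$ converts $\rho_t = (t\cc)\alpha e^{itq}\alpha^{-1}$ into $\rho_{s/q} = (s\cc/q)\alpha e^{is}\alpha^{-1} = \rho_s^{(\cc/q,\alpha)}$ of \eqref{eq:helicoidalmotion}; since $\{\rho_t\}$ and $\{\rho_s^{(\cc/q,\alpha)}\}$ trace out the same orbits, $f(\Rim)$ is invariant under the helicoidal motions about the axis $\alpha+\R e_3$, i.e., $f$ is a helicoidal surface.

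For the converse inclusion, I would invoke Theorem \ref{thm:X1} stated earlier in the paper: whenever a minimal surface $S\subset\Nil$ is invariant under a one-parameter group $\rho_t\in\isoo$, there exist a Riemann surface $\St$, an immersion $f:\St\to\Nil$ with $f(\St)=S$, and a one-parameter group $\gamma_t\in\Aut(\St)$ making $f$ equivariant with respect to $(\gamma_t,\rho_t)$. Applied either to a helicoidal minimal surface (invariant under $\rho_t^{(\cc,\alpha)}$ by Definition \ref{def:helicoidal}) or to a translation invariant minimal surface (invariant under $\rho_t$ of \eqref{eq:translationmotion}), this yields the required equivariant parameterization.

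The heavy lifting has already been carried out in Theorem \ref{thm:oneparameter} and Theorem \ref{thm:X1}; what remains is an essentially bookkeeping comparison between the abstract normal form $\cc_t\alpha\phi_t\alpha^{-1}$ and the geometric templates \eqref{eq:helicoidalmotion} and \eqref{eq:translationmotion}. The only mild subtlety I would watch for is that the equivariance of Definition \ref{df:equivariant} is attached to a specific parameterization, whereas the helicoidal/translation invariance of Definition \ref{def:helicoidal} is an image property; Theorem \ref{thm:X1} is precisely what bridges this gap by supplying a new parameterization when needed.
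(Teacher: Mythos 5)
Your argument is correct and follows essentially the route the paper intends: the paper states this result without a written proof, as an immediate consequence of Theorem \ref{thm:oneparameter}, and your elaboration (checking that one-parameter subgroups of $\Nil$ are exactly the translation motions, reparametrizing $\cc_t\alpha\phi_t\alpha^{-1}$ via $s=tq$ into $\rho_s^{(\cc/q,\alpha)}$, and invoking Theorem \ref{thm:X1} to pass from image-invariance back to an equivariant parameterization) is the natural filling-in of that argument. The only point worth flagging is that in the converse direction the fixed-point case of Theorem \ref{thm:X1} returns a horizontal plane rather than directly an equivariant parameterization, but a horizontal plane is trivially equivariant under rotations about a vertical axis, so nothing is lost.
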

\begin{Example}
 The standard helicoid
\[
f(x_1,x_2)=(x_1,x_2,\cc \tan^{-1}(x_2/x_1))
\]
 is a helicoidal minimal surface in $\mathrm{Nil}_3$. 
 In fact this surface is invariant under the helicoidal motion of pitch $\cc$.
 \end{Example}
 
 \begin{Remark}
 {\rm Caddeo, Piu and Ratto \cite{CPR} 
 studied rotational surfaces of constant mean curvature 
 (including minimal surfaces) in $\mathrm{Nil}_3$ 
 via ``equivariant submanifold geometry''
 in the sense of W.~Y.~Hsiang \cite{Hsiang}.
 Moreover, Figueroa, Mercuri and Pedrosa \cite{FMP} 
 investigated surfaces of constant mean curvature invariant under
 some one-parameter isometry group.
 For minimal surfaces the  results of this paper recover their results. }
 The moduli space of all equivariant minimal surfaces in 
 $\mathrm{Nil}_3$ will be given in the forthcoming paper \cite{K:Explicit}.
\end{Remark}
\subsection{Equivariance induced by one-parameter groups of 
 $\isoo$}
 We now show that a one-parameter group of symmetries of 
 a conformal minimal immersion $f$ from a Riemann surface $\Rim$ in $\Nil$ induces  a minimal horizontal plane or a
 one-parameter group of symmetries for a conformal minimal
 immersion $\tilde{f}$ of a strip $\St$. More precisely we have the following theorem.
 
\begin{Theorem}\label{thm:X1}
 Let $f$ be a conformal minimal immersion from a Riemann surface $\Rim$ into 
 $\Nil$ and $\rho_t$ a one-parameter group in $\isoo$ acting as a group of symmetries 
 of $f$, that is, $\rho_t. f(\Rim) = f(\Rim)$ holds.
\begin{enumerate}
\item Assume that the one-parameter group $\rho_t$ acts with fixed points. Then  $f(\Rim)$ is a horizontal plane. 
\item Assume that the one-parameter group $\rho_t$ acts without fixed points. Then there exists an open strip $\mathbb S 
 \subset \mathbb C$
 containing the real axis  and an immersion $\tilde f : \mathbb S \to \Nil$
 such that $f(\Rim) = \tilde f(\mathbb S)$ and
 \[
  \rho_t .\tilde f (z)= \tilde f(\gamma_t. z),
 \]
 for all $z \in \mathbb S$, holds.
\end{enumerate}
\end{Theorem}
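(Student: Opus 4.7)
My plan is to encode the one-parameter group $\rho_t$ infinitesimally as its generating Killing vector field $X$ on $\Nil$ and to split into two cases according to whether $X$ has zeros in $\Nil$ (case (1)) or is nowhere vanishing (case (2)). In case (2) the idea is to pull $X$ back to a nowhere vanishing Killing field on $\Rim$, lift to the universal cover, straighten it to a global holomorphic coordinate in which the flow becomes real translation, and finally identify the image of that coordinate as the desired strip.

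\textbf{Part (1).} I would first combine Theorem \ref{thm:oneparameter} with the explicit formulae \eqref{eq:helicoidalmotion} and \eqref{eq:translationmotion} to observe that a nontrivial one-parameter subgroup $\rho_t\subset\isoo$ has fixed points in $\Nil$ precisely when it is a rotation group about some vertical axis $\alpha + \R e_3$. Given such a fixed point $p_0 \in f(\Rim)$, the differential $d\rho_t|_{p_0}$ acts on $T_{p_0}\Nil$ as a rotation in the horizontal plane combined with the identity on the vertical direction, and therefore the only two-plane invariant under the whole family is the horizontal one. Thus $T_{p_0} f(\Rim)$ is horizontal, and combining this with rotational invariance of $f(\Rim)$, the classification of rotational minimal surfaces in $\Nil$ of \cite{CPR} (alternatively a direct uniqueness argument for the profile-curve ODE with horizontal initial data on the axis) will force $f(\Rim)$ to coincide with the horizontal plane through $p_0$.

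\textbf{Part (2).} In this case $X$ is nowhere zero on $\Nil$, hence nowhere zero on $f(\Rim)$, to which it is tangent since $\rho_t$ preserves $f(\Rim)$ set-wise. Pulling $X$ back through $f$ gives a nowhere vanishing Killing vector field $\hat X$ on $\Rim$ with respect to the induced metric. I would then pass to the universal cover $\pi\colon\widetilde{\Rim}\to\Rim$, biholomorphic to $\D$ or $\C$, and lift to a nowhere vanishing Killing field $\tilde X$ there. Because in two dimensions the flow of a Killing field acts by biholomorphisms, $\tilde X$ is of the form $h(z)\partial_z + \overline{h(z)}\partial_{\bar z}$ locally, with $h$ holomorphic and nowhere zero, and on the simply connected surface $\widetilde{\Rim}$ one can solve $w'(z) = 1/h(z)$ to obtain a global holomorphic coordinate $w$ in which $\tilde X = \partial/\partial\Re w$ and the real flow of $\tilde X$ becomes the translation $w \mapsto w + t$. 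Theorem \ref{thm:oneparameter} shows that when $\rho_t$ has no fixed points in $\Nil$ every orbit of $\rho_t$ is an embedded copy of $\R$ (an affine line if $\rho_t\subset\Nil$, a genuine helix otherwise), from which one concludes that the lifted real flow on $\widetilde{\Rim}$ is a free, proper $\R$-action. Consequently $w$ is a biholomorphism of $\widetilde{\Rim}$ onto an open connected translation-invariant set $\mathbb S\subset\C$, necessarily of the form $\R\times J$ for a connected open $J\subset\R$ containing $0$, which is the required open horizontal strip (or, in a degenerate subcase, all of $\C$) containing $\R$. Setting $\tilde f := f\circ\pi\circ w^{-1}\colon\mathbb S\to\Nil$ and $\gamma_t(z) := z + t$ then yields the equivariance $\rho_t\circ\tilde f = \tilde f\circ\gamma_t$ and the identity $\tilde f(\mathbb S) = f(\Rim)$.

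\textbf{Main obstacle.} The most delicate point is the passage from the local ``straightening coordinate'' to a bona fide strip, which requires both completeness of the lifted flow on $\widetilde{\Rim}$ and global injectivity of $w$. Completeness follows from path-lifting into the simply connected cover together with completeness of $\rho_t$ on $\Nil$. Injectivity is where the no-fixed-point hypothesis enters essentially: were $\rho_t$ to have periodic orbits on $\Nil$, the lifted flow could be periodic along certain leaves and $w$ would then merely be a covering map. The explicit normal form of Theorem \ref{thm:oneparameter} rules out closed orbits in the absence of fixed points, and the invariance of $\mathbb S = w(\widetilde{\Rim})$ under all real translations, combined with connectedness, then forces its strip shape.
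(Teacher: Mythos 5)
Your part (1) is a legitimate alternative route: the linearization argument at the fixed point (the only $2$-plane of $T_{p_0}\Nil$ invariant under the whole rotation family is the contact plane $\mathrm{span}(E_1,E_2)$ at $p_0$) is correct, and from there you delegate to the classification of rotational minimal surfaces in \cite{CPR}, or to uniqueness for the profile-curve ODE, plus analytic continuation. The paper instead stays inside the loop group framework: it normalizes the extended frame at the fixed point, passes to the normalized potential, and shows that the rotational symmetry forces $B\equiv 0$ and $p$ constant, which by \cite[Section 6]{DIKAsian} characterizes horizontal planes. Both work; the paper's version is self-contained and also records the vanishing of the Abresch--Rosenberg differential, which is reused in Theorem \ref{thm:Rotationwithfixed}.

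Part (2), however, has a genuine gap at its very first step. You pull the generating Killing field $X$ back to a vector field $\hat X$ on $\Rim$ and then treat its flow as a globally defined, complete one-parameter group of automorphisms of $\Rim$ (and of $\widetilde{\Rim}$). But the hypothesis is only the set-theoretic invariance $\rho_t.f(\Rim)=f(\Rim)$, and $f$ is merely an immersion: the integral curve of $X$ through $f(p)$ stays in the image $f(\Rim)$, yet it need not lift to a curve in $\Rim$ defined for all time, because $f:\Rim\to f(\Rim)$ is in general neither injective nor a covering. Concretely, take a rotational catenoid $C=\tilde f(\St)$ with $\tilde f$ periodic in $\Re z$, and let $\Rim$ be a bounded sub-rectangle of $\St$ covering more than one period; then $f=\tilde f|_{\Rim}$ satisfies $f(\Rim)=C$ and $\rho_t$ acts without fixed points on $C$, but the translation flow does not preserve $\Rim$, $\hat X=\partial_x$ is incomplete on $\Rim$, and your straightening coordinate $w$ maps $\widetilde{\Rim}$ onto a bounded rectangle rather than a translation-invariant strip. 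This is exactly the phenomenon the paper flags in the Remark following the theorem (``in general it is not possible to define a one-parameter group on the original surface $\Rim$''). The paper's proof circumvents it: it only uses the flow \emph{locally}, on a small rectangle $\D_0$ where the generating field is straightened to $\partial/\partial x$, then invokes the Burstall--Kilian degree-one (Delaunay-type) potential to produce an equivariant minimal immersion $f_0^\sharp$ of a genuine strip $\St$ agreeing with $f$ on $\D_0$, and finally proves $f_0^\sharp(\St)=f(\Rim)$ by a continuation/maximality argument. Some version of that extension-and-exhaustion step is unavoidable and is missing from your argument.
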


\begin{proof}
(1): Since $\rho_t$ is classified as in Definition
 \ref{def:helicoidal} and has fixed points by assumption, 
 it must be a rotation around  the
 axis through a point $(a, b, 0) \in \Nil$
 parallel to the $e_3$-axis. Then we can choose a simply-connected 
 domain $\tilde \D \subset \C$ which contains $z =0$ and a minimal 
 immersion  $\tilde f : \tilde \D \to \Nil$ 
 such that $\tilde f(\tilde \D) \subset f(\Rim)$ and
 $\tilde f(0)$ is one of fixed points of $\rho_t$. Moreover there exists
 a $\gamma_t : z \mapsto  z e^{it}$ as a local one-parameter group 
 of $\tilde \D$ such that $0 \in \tilde \D$ is a fixed point of 
 $\gamma_t$ and $\tilde f$ is equivariant with 
 respect to $(\gamma_t, \rho_t)$. 
 Then for the harmonic normal Gauss map $g: \D \rightarrow \Ha^2$ 
 and an extended frame $F$ of $f$ we obtain
\begin{equation}
 g(\gamma_t.z, \overline{\gamma_t.z}) = e^{i a t} g(z, \bar z)
\end{equation}
 for some $a \in \R$ and the extended frame $F$ satisfies
\begin{equation}
 F(\gamma_t.z, \overline{\gamma_t.z}, \lambda) 
 = M_t(\lambda) F(z, \bar {z}, \lambda) k(t,z, \bar z),
\end{equation}
 where $M_t(\lambda) \in \LISU$ and $M_t(\lambda = 1) = 
 \di (e^{a it/2}, e^{a it/2}) $ and 
 $k(t,z, \bar z) \in \Uone$, see Proposition \ref{prp:transformation}.
  For $z=0$ we infer
\begin{equation}
 F(0, \lambda) = M_t(\lambda) F (0,\lambda) k(t,0),
\end{equation}
 Replacing $F$ by $\hat F(z,\bar z, \lambda) = F(0, \lambda)^{-1} 
 F (z,\bar z, \lambda)$,
 we obtain $\hat F(0,\lambda) = \id $ and, setting 
 $\hat M_t(\lambda) =  F(0, \lambda)^{-1} M_t(\lambda)  F(0, \lambda)$  
 we derive
\begin{equation}
\hat F(\gamma_t . z, \overline{\gamma_t . z}, \lambda) 
 = \hat M_t(\lambda) \hat F(z,\bar z, \lambda) k(t,z, \bar z).
\end{equation}
 As a consequence we obtain
\begin{equation}
 \hat M_t(\lambda) = k(t,0)^{-1} = k_0(t).
\end{equation}
 In particular, $k_0(t) = \hat M_t(\lambda)$ is independent of 
 $\lambda$ and contained in $\Uone$.
 Hence $\hat M_t(\lambda)$ is diagonal.
 As a consequence we have two cases: 
 
 {\bf Case 1.} $\hat M_t(\lambda) =  \id$ for all $t \in \R$. In this case also $M_t(\lambda) =  \id $ 
 for all $t \in \R$. But then
 $f(e^{it}z) =  f(z)$ for all $t\in \R$ and $f$ is not a surface.
 
 {\bf Case 2.}  $\hat M_t(\lambda) = k_0(t) = \di (e^{iat/2}, e^{i a t/2})\neq  \id$, 
 that is, $a \in \R \setminus 2 \pi \Z$.
 Since $\hat F(0,\lambda) =\id$ we can perform the Birkhoff decomposition
 $\hat{F}(z, \bar z, \lambda) = \hat{F}_- (z,\lambda) \hat{L}_+(z, \bar z, \lambda)$
 around $z =0$
 and obtain
 \begin{equation} \label{trafoFminus}
 \hat{F}_-( \gamma_t. z, \lambda) = k_0(t) \hat{F}_- (z,\lambda) k_0(t)^{-1}.
 \end{equation}
Note that $\hat{F}(0,0,\lambda) = \id$ implies that $\hat{F}_-$ is holomorphic with respect to $z$ in an open neighbourhood 
 of $z = 0$.
 Let $\eta_-(z,\lambda) = \hat{F}_-^{-1} d\hat{F}_-$, then 
 $\eta_-(z,\lambda) = \lambda^{-1} \xi(z) dz$ is
 the normalized potential associated with the minimal surface $f$, 
 the normal Gauss map $g$, and the frame $\hat{F}$.
 Then we obtain from \eqref{trafoFminus}:
\begin{equation} \label{trafoxi}
\xi( e^{it} z) e^{it} = k_0(t) \xi(z) k_0(t)^{-1}.
\end{equation}
 Writing 
\begin{equation}
\xi = 
\lambda^{-1}
\begin{pmatrix} 
 0 & -p \\ Bp^{-1} & 0
\end{pmatrix},
\end{equation}
the equation \eqref{trafoxi} yields 
\begin{equation} \label{trafop}
p(e^{it} z) e^{it} = e^{ia t} p(z), 
\end{equation}
 and we have 
\begin{equation}
 B(e^{it} z) e^{2it} = B(z),
\end{equation}
 since $B(z)dz^2$ is a globally defined quadratic differential.
 Note that $a$ takes values in $\R \setminus 2 \pi \Z$.
 From the last equation it now follows that $B(z)$ is identically zero.
 
 From equation \eqref{trafop} we infer that $p$ is of the form 
 $p(z) = p_j z^j$ for some $j \in \Z$
 and $p_j \neq 0$. Moreover, $j+1 = a$ holds.
 
 Since we know that $\hat{F}_-$ is holomorphic at $z=0$ it follows that
  $p$ is holomorphic at $z=0$, whence $j \geq 0$ follows. 
 Now, if $j>0$, then the surface $f$ has a branch point at $z=0$, 
 a contradiction. As a consequence, $j =0$.
 This case has already been considered in 
 \cite[Section 6]{DIKAsian} and it was shown that 
 the corresponding minimal surfaces are horizontal planes.
 Then since the Abresch-Rosenberg differential $B dz^2$ vanishes on $\tilde f(\tilde \D) \subset f(\Rim)$, it vanishes
 on $f(\Rim)$ and the whole surface $f(\Rim)$ is the horizontal plane.

(2): Since $\rho_t$ acts without fixed points on $f(\Rim)$, around any  $p_0 \in f(\Rim)$   there exists a chart $\psi_0: \D_0 \rightarrow \Nil$ such that $\psi(0) = p_0$ and $\D_0$ is an open  rectangle containing the origin and with axes parallel to the usual coordinate axes of $\R^2$.
Moreover, for all $z \in \D_0$ and sufficiently small 
 $t \in I = (-\epsilon, \epsilon)$ we have with $f_0 = f \circ \psi_0$:
 \[
  f_0 (z + t)  = \rho_t.f_0 (z).
 \]
 This follows from the fact that the (never vanishing) vector field generating the one-parameter group action $\rho_t$ can be represented as $\frac{\partial}{\partial x}$ in some chart.
 
 Let $\St$ denote the strip parallel to the real axis and containing $\R$ which has the same height as $\D_0$. By \cite{Burstall-Kilian} there exists a Delaunay type matrix $D(\lambda)$ which generates a minimal immersion $f_o^\sharp$ on $\St$ which coincides with $f_0$ on $\D_0$,
 see also Theorem \ref{thm:equivariant}.
 
 We claim $ f_o^\sharp (\St) \subset f(\Rim)$. Suppose this is wrong, then there exists a line segment $L$ in $\St$, parallel to the $x$-axis,  such that  $f_o^\sharp $ leaves $f(\Rim)$ at some endpoint $l_0$ of $L$. Let us write $l_0 = (t_0, y_0)$ and let us assume without loss of 
 generality $t_0 > 0.$
 Let $s>0$ such that $(s, y_0) \in \D_0.$  Then $t_0 = ms + s^*$ with $0 < s^* < s$ and $m \in \mathbb{Z}$. As a consequence  $f_0^\sharp (0 + t_0)  = (\rho_s)^m.f_0 (s^*) \in f(\Rim)$.
 Now we can choose a small chart around $q_0 = f_0^\sharp(t_0)$ which corresponds to a small box in $\R^2$  centered at $(t_0,y_0)$ such that, analogous to the argument above, 
 $f_0^\sharp$ maps the small box into $f(\Rim)$. Hence $f_0^\sharp$ maps a strictly larger line segment $ L \subset L^\sharp$ into $f(\Rim)$. This contradiction implies $f_0^\sharp (\St) \subset f(\Rim)$. 
 
 Finally we want to show $f_0^\sharp (\St) = f(\Rim)$. 
 For this we choose $\St$ considered above as large as possible. Let us consider first the case, where  $\St$ ends in the upper half-plane at the line $T_u$ and in the lower half-plane at the line $T_l$, both parallel to the $x$-axis. If there exists any point in $f(\Rim)$ which is not contained in $f_0^\sharp (\St)$, then we choose a curve in  $f(\Rim)$ connecting such a point with $f_0^\sharp(0)$. This curve needs to intersect $f_0^\sharp(T_u)$ or 
 $f_0^\sharp(T_l).$ At a point of intersection we apply the argument above and obtain an open strip containing the corresponding boundary line of the image of $f_0^\sharp$.  Therefore 
  $f_0^\sharp$ can be extended beyond this boundary line, a contradiction.
  We thus only need to consider the case ,where the strip is either half-infinite or all of $\C$ and where in $f(\Rim)$ there is a boundary point $q_0$ of $f_0^\sharp (\St)$, which can be obtained by taking a limit to $\infty$ inside $\St$. Then by the argument above we obtain a finite open strip $\mathbb{B}$ containing $q_0$ and an equivariant conformal map 
  $f_0^{\prime}$ 
  such that on some sub-strip of  $\mathbb{B}$ and some half-plane the conformal maps  $f_0^\sharp$
  and $f_0^\prime$ have the same image. Since both maps are equivariant
 under real translations, they induce a bi-holomorphic change of coordinates of the type 
 $(x,y) \mapsto (x, h(y))$. Hence $h(y) = y + c$. 
 This is impossible, since one strip has infinite width 
 and the other one has only finite width.
 \end{proof}
 
 \begin{Remark}
 Above we have shown that the invariance of $f(\Rim)$ under a one-parameter group without fixed points can be realized by an immersion 
 of some open strip $\mathbb S$. However, in general it is not possible to define a one-parameter group on the original surface $\Rim$.
\end{Remark}
\subsection{One-parameter groups of $\Aut (\Rim)$} \label{list}
 It is well known that only a few Riemann surfaces admit a one-parameter group of 
 automorphisms. For non-compact simply-connected 
 Riemann surfaces only the following cases occur 
 (up to conjugation by bi-holomorphic automorphisms (see, for example
 \cite[Section V-4]{FK:Rim}):
 Let us denote the complex plane by $\C$ and
 the upper half plane by $\mathbb H$, that is $\mathbb H= \{ z \in \C \;|\; \Im z >0 \}$.
\begin{enumerate}
 \item[(1a)] $\C$ and all translations parallel to the $x$-axis,
 \item[(1b)] $\C$ and all multiplications $z \rightarrow e^{ta} z$ with $a \in \C^*$.\\[-0.3cm]
 \item[(2a)] $\mathbb{H}$ and all translations parallel to the real axis,
 \item[(2b)] $\mathbb{H}$ and all multiplications $z \rightarrow az$ with $a$ positive real,
 \item[(2c)] $\mathbb{H}$ and all automorphisms fixing the point $i$.
\end{enumerate}
 In the cases (1b) and (2c) 
 the Riemann surface contains a point which is fixed by the one-parameter group. 
 We will show in Theorem \ref{thm:Rotationwithfixed} below that these cases only consist of very special minimal surfaces.
 In case (1b), if one removes the origin and considers the map $\C  \rightarrow 
 \C^*=\C\smallsetminus \{0\}, w \rightarrow e^{aw}$, then the group action pulls back to translation parallel to the $x$-axis. A similar observation holds in case (2c), 
 if one interprets it as rotation about the origin of the unit disk.
 In case (2b), one can map $\mathbb{H}$  via $z \rightarrow \log (z) -i\pi /2 $ 
 to the strip parallel to the real axis between $y= \pi /2 $ 
 and $y= - \pi /2$ such that the 
 one-parameter group turns into the group of translations parallel to the real axis.

 In the following cases one can consider the universal cover and thus 
 obtains strips with the one-parameter group of translations parallel to the real axis.
\begin{enumerate}
 \item[(3a)] $\mathbb{H}^2_*=\mathbb{H}^2\smallsetminus\{0\}$ and all rotations about the origin,
\item[(3b)] $\mathbb{C}^*$ and all multiplications $z \rightarrow e^{ta}z$ with $a \in \C^*$,
\item[(3c)]$\mathbb{A}_{a,b}$ and all rotations about the origin, 
 where  $ 0 < a < b$ and  
 $\mathbb{A}_{a,b} = \{ z \in \C, 0 < a < |z| < b \}$.
\end{enumerate}
 Beyond the cases listed above, only tori admit one-parameter 
 groups of automorphisms. Note that above already all conformal 
 types of cylinders have been listed.
\begin{Definition}
  Equivariant surfaces for which the group acts by translations 
  (on a strip) will be called \textit{$\R$-equivariant}.
   Equivariant surfaces for which the group acts by rotations 
  (about a point) will be called \textit{$\mathbb S^1$-equivariant}.
\end{Definition}
 The cases (1b) and (3b) do not fall directly into these two categories.
 Note, all $\mathbb S^1$-equivariant cases have a natural fixed point 
 contained in the domain of definition, or not.
\begin{Theorem}\label{thm:Requivariant}
 Let  $f : \Rim \to \Nil$ be an equivariant minimal surface of the 
 type {\rm (3a)}, {\rm (3b)} or {\rm (3c)}.  Since the fixed point of $\gamma_t$ is not contained in $\Rim$,  one can 
 realize the universal cover $\St$ of $\Rim$ as a strip containing the $x$-axis,  
 such that  the  induced map $\tilde{f}: \St \rightarrow \Nil$ is 
 $\R$-equivariant relative to all real translations in the first two cases and in direction $a$ in the last case. 
 Moreover, in the cases {\rm (3a)} and {\rm (3c)} $\tilde{f}$ is periodic and 
 has a $($smallest$)$  positive real period, and 
 in the case {\rm (3b)} the period is $2 \pi /a$.
\end{Theorem}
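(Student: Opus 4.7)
The plan is to verify the theorem case by case, exhibiting in each case an explicit biholomorphic realization of the universal cover $\St$ of $\Rim$ as a simply connected region of $\C$ containing the $x$-axis, together with a covering map $\pi : \St \to \Rim$ that conjugates real (or, in (3b), complex) translations on $\St$ to the given one-parameter action $\gamma_t$ on $\Rim$. Because in all three cases the unique fixed point of $\gamma_t$ has been removed from $\Rim$, the covering $\pi$ is an honest topological covering with deck group $\pi_1(\Rim)$, and the claimed period of $\tilde f = f \circ \pi$ then follows at once from the generator of that deck group.

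For cases (3a) and (3c) I would employ covering maps of the form $\pi(w) = c\, e^{iw}$. In (3a) with $\Rim = \mathbb{D}^*$, choosing any $c \in (0,1)$ makes $\St = \{w : \Im w > \log c\}$ a translated upper half-plane containing the real axis; in (3c) with $\Rim = \mathbb{A}_{a,b}$, the choice $c = \sqrt{ab}$ makes $\St = \{w : |\Im w| < \tfrac{1}{2}\log(b/a)\}$ a horizontal strip symmetric about the $x$-axis. A direct substitution shows in both cases that $\gamma_t : z \mapsto e^{it}z$ lifts to $\tilde\gamma_t : w \mapsto w + t$, and that the deck group is generated by $w \mapsto w + 2\pi$; hence $\tilde f$ is $\R$-equivariant under the full group of real translations, with smallest positive real period $2\pi$.

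For case (3b) with $\Rim = \C^*$ and $\gamma_t : z \mapsto e^{ta}z$, $a \in \C^*$, I would take the exponential cover $\pi(w) = e^w$ from $\St = \C$, which trivially contains the $x$-axis. The action lifts to $\tilde\gamma_t : w \mapsto w + ta$, a translation in direction $a$, and the deck group is generated by $w \mapsto w + 2\pi i$. After the rescaling $s = w/a$ (under which $\tilde\gamma_t$ becomes the real translation $s \mapsto s + t$) the deck generator becomes $s \mapsto s + 2\pi i / a$, which is the period asserted in the theorem.

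I do not expect a serious obstacle; the proof is essentially bookkeeping with exponential covering maps and an identification of their deck groups. The one delicate choice is the scaling constant in $\pi$ for (3a) and (3c), which must be picked so that the $x$-axis lies in the interior of $\St$ rather than on its boundary; without this shift the strip would only have the $x$-axis as part of its boundary and the statement would fail as written.
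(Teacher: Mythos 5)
Your explicit covering maps for (3a), (3b), (3c) are correct and match the setup the paper already establishes in its discussion of one-parameter groups of $\Aut(\Rim)$: in each case the lifted action is conjugated to real translation on a strip containing the $x$-axis, and the deck group is generated by a $2\pi$-translation (resp.\ $2\pi i/a$ after rescaling in (3b)). That part of your argument is sound bookkeeping, and your remark about shifting the scaling constant so the real axis lies in the \emph{interior} of the strip is a legitimate point of care.

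The gap is in the one assertion the paper's proof actually addresses: the existence of a \emph{smallest} positive real period. You write that the deck group is generated by $w \mapsto w+2\pi$, ``hence $\tilde f$ \dots\ has smallest positive real period $2\pi$.'' This is a non sequitur. The deck group only tells you which translations satisfy $\pi\circ\tau = \pi$, hence that $2\pi$ \emph{is} a period of $\tilde f = f\circ\pi$. It does not rule out additional periods of $\tilde f$ that do not come from deck transformations: since $\tilde f(w+t)=\rho_t.\tilde f(w)$, any $t_0$ with $\rho_{t_0}$ acting trivially on the image gives a period, and a priori such $t_0$ could be smaller than $2\pi$ or even accumulate at $0$ (in which case no smallest positive period exists). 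So your identification of the smallest period as exactly $2\pi$ is unjustified and possibly false, and the weaker existence claim still needs an argument that the period group of $\tilde f$ is discrete. The paper supplies exactly this missing step: if there were no smallest positive period, there would be a sequence of positive periods converging to $0$, and real analyticity of $f$ would then force $f$ to be constant, contradicting that $f$ is an immersion. You need to add this discreteness argument (and weaken your ``smallest period $=2\pi$'' to ``$2\pi$ is a period, and the group of periods is discrete, hence has a smallest positive element''). The treatment of case (3b) is otherwise consistent with the paper's, which uses $\pi_a(w)=e^{aw}$ directly instead of your rescaling $s=w/a$; the two are equivalent.
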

\begin{proof}
We only need to prove the last assertion. Suppose there does not exist 
 a smallest positive period. Then there exists a sequence $p_n$ of positive 
 periods converging to $0$. Since $f$ is real analytic, $f$ is constant, a contradiction, since $f$ is assumed  to be
a surface. In the case {\rm (3b)} we consider the universal 
cover $\pi_a : \C \rightarrow \C^*, w \rightarrow e^{aw}$. Then the given action corresponds to 
$w \rightarrow w+t$. Hence the period is $2 \pi /a$.
\end{proof}
\subsection{Special equivariant minimal surfaces} \label{fixed}
Next we will show that $\mathbb S^1$-equivariant 
minimal surfaces with fixed point or vanishing Abresch-Rosenberg differential
are very special. 
\begin{Theorem}\label{thm:Rotationwithfixed}
\mbox{}
\begin{enumerate}
 \item 
 Consider an equivariant minimal surface in $\Nil$  
 with fixed point  in $\Nil$, that  is, it is in one of 
 the cases {\rm (1b)} or
 {\rm (2c)}.
 Then  the Abresch-Rosenberg differential vanishes identically and 
 such a minimal surface is only a horizontal plane.
\item  Consider an equivariant minimal surface in $\Nil$ without fixed point and  
 vanishing Abresch-Rosenberg differential. Then such a minimal surface is only a vertical plane.
\end{enumerate}
\end{Theorem}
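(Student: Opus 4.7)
The plan for part (1) is to reduce to Theorem~\ref{thm:X1}(1) via a fixed-point argument. In both cases (1b) and (2c), $\gamma_t \in \Aut(\Rim)$ fixes a point $z_0 \in \Rim$ (the origin and $i$, respectively). Evaluating the equivariance relation $f \circ \gamma_t = \rho_t \circ f$ at $z_0$ yields $\rho_t.f(z_0) = f(z_0)$, so $\rho_t$ fixes $f(z_0) \in \Nil$. Theorem~\ref{thm:X1}(1) then gives directly that $f(\Rim)$ is a horizontal plane, and the vanishing of $B$ is read off from the proof of that theorem, where the transformation law $B(e^{it}z)e^{2it} = B(z)$ forces $B \equiv 0$ on the local parametrization, hence globally since $B\,dz^2$ is a well-defined holomorphic quadratic differential on $\Rim$.

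For part (2), I would first invoke Theorem~\ref{thm:X1}(2) to realize the surface on a strip $\St \supset \R$ as $\tilde f : \St \to \Nil$ with $\tilde f(z+t) = \rho_t.\tilde f(z)$, where $\rho_t$ is a one-parameter subgroup of $\isoo$ acting without fixed points on $\Nil$. By Theorem~\ref{thm:oneparameter}, such a $\rho_t$ is either a pure translation in $\Nil$ or a helicoidal motion with nonzero pitch $\cc \neq 0$. Writing $a \in \R$ for its fiber rotation angle ($a = 0$ in the translation case), Theorem~\ref{thm:symmetry} gives $\tilde g(z+t) = e^{iat}\tilde g(z)$ for the normal Gauss map. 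Under the hypothesis $B \equiv 0$, the normalized potential reduces to the upper-triangular nilpotent form $\xi = \lambda^{-1}\bigl(\begin{smallmatrix} 0 & -p(z) \\ 0 & 0 \end{smallmatrix}\bigr)\,dz$, and mimicking the argument used for \eqref{trafop} in the proof of Theorem~\ref{thm:X1}(1) yields $p(z+t) = e^{iat}p(z)$, forcing $p(z) = p_0 e^{iaz}$ with $p_0 \neq 0$.

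The final step, and the main obstacle, is to identify the surface obtained from this nilpotent potential as a vertical plane. The direct computational route would integrate $dC = C\xi$ in closed form (the nilpotency of $\xi$ yields $C = \bigl(\begin{smallmatrix} 1 & * \\ 0 & 1 \end{smallmatrix}\bigr)$ with $*$ a primitive of $-\lambda^{-1}p$), perform an Iwasawa decomposition $C = F V_+$, and insert $F$ into the Sym formula~\eqref{eq:symNil} at $\lambda=1$; the expected output is an explicit parametrization of a vertical plane. A cleaner conceptual shortcut goes through the observation that a horizontal plane $\{x_3=c\}$ admits only the rotation group about the $x_3$-axis as its one-parameter group of isometric symmetries, since translations and helicoidal motions with $\cc \neq 0$ or with rotation axis off the $x_3$-axis all shift the $x_3$-coordinate; this rotation group always has $(0,0,c)$ as a fixed point on the plane, contradicting the no-fixed-point hypothesis. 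Combined with the Remark following the Proposition in Section~\ref{section3}, which identifies minimal surfaces with $B \equiv 0$ as the homogeneous ones, this forces $f(\Rim)$ to be a vertical plane. The genuine difficulty is precisely that $B \equiv 0$ is the degenerate regime in which the standard loop group machinery requires modification, so the rigorous identification needs either the explicit Iwasawa/Sym computation above or an appeal to the classification argument of Appendix~B of \cite{DIKAsian}.
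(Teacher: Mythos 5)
Your argument is correct and is essentially the paper's own proof: part (1) is exactly the reduction to Theorem \ref{thm:X1}(1) via the fixed point of $\gamma_t$ (the paper states this in one line), and part (2) ultimately rests, as in the paper, on the classification of minimal surfaces with vanishing Abresch--Rosenberg differential as horizontal or vertical planes (Proposition 2 of \cite{DIKAsian}) together with the observation that a horizontal plane admits only rotations about the $x_3$-axis as one-parameter symmetry groups, and these have a fixed point. The detour in your part (2) through the strip realization and the normalized potential $p(z)=p_0e^{iaz}$ is unnecessary for this route and is not carried to completion, but the ``conceptual shortcut'' you then give is precisely the published argument, with more justification than the paper supplies for excluding the horizontal plane.
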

\begin{proof}
 (1) The statement  follows directly from $(1)$ in Theorem \ref{thm:X1}. 

 (2) By Proposition 2 in \cite{DIKAsian}, such a minimal surface is only a horizontal 
plane or a vertical plane. The only vertical plane does not have any fixed point.
\end{proof}

\subsection{Basics about $\R$-equivariant minimal surfaces}
 By our discussion in Sections \ref{list} and \ref{fixed}, from here on   we only need to consider $\R$-equivariant surfaces
 which are defined on some strip $\St$ 
 and have non-vanishing Abresch-Rosenberg differentials. 
 Specific properties of the 
 different cases will be discussed elsewhere. 
 For simplicity of notation we will, as before, abbreviate a function $p(z, \bar z)$ by $p(z)$.
 Hence the expression $p(z)$ does not necessarily denote
 a function depending only on $z$.
\begin{Theorem}\label{thm:basics-equivariant}
 Let  $f:\St \to \Nil$ be an $\R$-equivariant minimal surface relative to the 
 one-parameter group $(\gamma_t, \rho_t)$,  $\gamma_t.z = z + t$, and $\rho_t$ 
 a one-parameter group in $\isoo$ which is not contained in $\Nil$.
 Let $g$ denote the $($non-holomorphic$)$  normal Gauss map of $f$. Then we obtain
 \begin{equation}\label{eq:fandgtrans}
   f(z+t) = \rho_t. f(z) \quad \mbox{and} \quad  g(z+t) = e^{iat} g(z)
 \end{equation}
 with $ 0 \neq a \in \R$. 

 Moreover,
\begin{enumerate}
 \item For an extended frame $F$ of $f$ as given in \eqref{special frame},  
 there exists some $k(t,z) \in \Uone$ satisfying
 \begin{equation}\label{eq:Fequiv}
   F(z+t, \lambda) =  M_t (\lambda) F(z, \lambda) k(t,z),
 \end{equation}
where $M_t \in \LISU$,  $M_{t}(\l = 1)  = \di ( e^{iat /2},e^{-ia t/2} ). $
 \item There exists a unitary diagonal matrix $\ell$ such that 
 the frame $F_{\ell} = F \ell$ satisfies $k_{\ell}(t,z) \equiv \id$.
\end{enumerate} 

 \end{Theorem}
\begin{proof}
 The transformation behaviour  (\ref{eq:Fequiv})  of $F$ 
 follows, since $F: \St \rightarrow \LISU$ is a lift of $g : \St \rightarrow 
 \mathbb H^2$, see also \eqref{eq:transext}.
 Also note, since $M_{t}|_{\l = 1}$ is a homomorphism, 
 it is easy to verify that $k(t,z)$ satisfies the cocycle condition
\begin{equation} \label{cocyc}
  k(t+ s, z) = 
 k(t, z)  k(s, z+t),
 \end{equation}
 and we obtain (see for example Theorem \ref{invframe}
 and \cite[Theorem 4.1]{DoMa}):
\begin{equation*}
 k(t,z) = \ell (z) \ell (z+t)^{-1},
\end{equation*}
where $\ell (z) = \ell (x +iy)  = k(x, iy)^{-1}$.
\footnote{
The following is a brief proof:
 \begin{align*}
 \ell (z) \ell (z + t)^{-1} &= k(x, iy)^{-1} k(x+t, iy) \\
 &=k(x, iy)^{-1} \left\{k(x, iy)  k(t,iy +x)\right\} = k(t ,z).
 \end{align*}
 Where we have used equation (\ref{cocyc}) with $z$ replaced by 
 $iy$, $t$ by $x$ and $s$ by $t$.
} 
 As a consequence, replacing the original frame $F$ by $F\ell$ 
 one obtains an extended frame as desired. 
\end{proof}

\begin{Remark}
\mbox{}
\begin{enumerate}
 \item 
 In the theorem above one could also permit one-parameter families $\rho_t$ which are contained in $\Nil$. This case will be discussed in Section \ref{subsc:translation} below.
\item Theorem \ref{thm:basics-equivariant} also holds for 
 any \textit{general} extended frame $F$ of a harmonic map $g$ which satisfies 
 \eqref{eq:fandgtrans}.
\end{enumerate}

\end{Remark}
\begin{Definition}
 A general extended frame satisfying
\begin{equation*}
F(z+t,\lambda) = M_t(\lambda) F(z, \lambda)
\end{equation*}
 will be called {\it $\R$-equivariant}.
\end{Definition}

\subsection{A chain of extended frames}
 For a detailed discussion of the relation between 
 spacelike CMC surfaces in Minkowski $3$-space $\Min$ 
 and minimal surfaces in $\Nil$ it is important to 
 use extended frames with specific additional properties.
 In \cite{DIKAsian}, also see \eqref{special frame}, 
 a specific extended frame was defined for all $\lambda =1$ 
 and the matrix entries were  (by definition) the spinors associated 
 with the associated family $\{f^\lambda\}_{\l \in \mathbb S^1}$ of $f$. 
 Note that the spinors of  a minimal surface in  $\Nil$  are defined 
 uniquely up to a common sign. By continuity in $\lambda$, 
 the choice of sign for the $\psi_j$ thus is the
 same for all $\lambda$, whence irrelevant.

 Hence the first extended frame in our chain is an extended frame 
 mentioned above  and denoted by $F(z,\lambda)$ in \eqref{eq:Fequiv}.
 As pointed out in Theorem \ref{thm:basics-equivariant}, 
 this extended frame will, in general,
 not be $\R$-equivariant
 under the action of the translational one-parameter group $z \rightarrow z+t$.
 But we have shown that there exists some function $\ell (z)$ such that
 $F_\ell = F \ell$ defines an $\R$-equivariant general extended frame for the
 translational one-parameter group. The frame $F_{\ell}$ is our second frame.
 Finally we consider an $\R$-equivariant extended frame which 
 also attains the value $\id$  at $z = 0$ for all $\lambda$: 
 $\hat F(z,\lambda) = F_{\ell} (0,\lambda)^{-1} F_{\ell} (z,\lambda)$. 
 
 Thus we have the following triple of extended frames
\begin{equation}\label{eq:correspondence}
F \longrightarrow F_{\ell} \longrightarrow \hat F.
\end{equation}

\begin{Remark}
 Note, the frames $F$ and $F_{\ell}$ generate the same surfaces in 
 $\Min$ and in $\Nil$ via the respective Sym formulas.
 The frame $\hat{F}$ generates in $\Min$ a surface which is isometric 
 to the previously generated surface, but the corresponding surface in $\Nil$ has, in general, no simple relation to the other (two) surfaces in $\Nil$.
 However, as will explained below, exactly this frame 
 yields a very simple 
 ``degree-one-potential''
 from which we will be able to construct what we want.
 Note, in such a chain, if one assumes  that any of these extended frames has 
 a translational one-parameter group of symmetries, 
 then all three frames  have such a symmetry.
 The frames $\hat F$ are $\R$-equivariant
 general extended 
 frames of the normal Gauss map $g$, where $g:\St \rightarrow \Ha^2$ is 
 non-holomorphic (since the surface has non-vanishing Abresch-Rosenberg differential) 
 harmonic, and also define 
 spacelike CMC surfaces in Minkowski 3-space $\Min$.
 For more details on $\R$-equivariant harmonic maps see, for example  
 \cite{Burstall-Kilian} 
 and for spacelike CMC surfaces in $\Min$ see, for example \cite{BRS:Min}. 
\end{Remark}
 
\subsection{The construction principle}\label{subsc:construction}
 In order to construct $\R$-equivariant minimal surfaces in $\Nil$ 
 we will start in general from some special potential and will 
 arrive at some 
 $\R$-equivariant general extended frame $\hat{F}$, assuming the 
 monodromy has the required properties. (In a sense just  reversing the arrows 
 in \eqref{eq:correspondence} above.)
 What special potentials we will need to start from 
 will be the contents of the next sections.

 At any rate, we will obtain the transformation behaviour 
 (for $t \in \R$ and $z \in \St^\prime $): 
\[
 \hat F(z+t,\lambda) = \hat M_t(\lambda) \hat F(z,\lambda)
\]
 and we also know $\hat{F}(0,\lambda) = \id$.
 We will apply  \cite{Burstall-Kilian} to construct all of such frames.
 Note, while the potential will be defined on some strip $\St$,  $\hat F$ may be defined on some smaller strip $\mathbb S^{\prime}$ only, see \cite{K:Explicit}.
 
 After  $\hat F$ has been constructed we want to use this frame to 
 construct  $\R$-equivariant minimal surfaces in $\Nil$.
 But for this  it is important to require that $\hat M_t$ is diagonalizable for
  $\l = 1.$ In particular, the eigenvalues of  $\hat M_t$ 
 need to be unimodular at $\l = 1$, see Theorem \ref{thm:basics-equivariant}.
  Therefore, in general, we need to change 
 the frame $\hat{F}$ to another frame, for which the monodromy is diagonal for $\lambda = 1$. This is achieved by putting $F = SF$, where $S$ diagonalizes the monodromy as required.
 (For more details see below.)
 Comparing to the chain of frames above we observe, that this new frame plays 
 the role of $F_\ell$. 
\begin{Remark}
 The general extended frame $\hat F$ with the right choice of  initial condition 
 $S$ gives the extended frame $F_\ell = S \hat F$, which is not $F$. However, 
 this is irrelevant for the resulting minimal immersion $f$. More precisely, 
 if one plugs $F$ and $F_\ell$ into the Sym formula,  then the resulting 
 minimal surfaces are the same. Thus we will only consider  $F_\ell$.
\end{Remark}
As pointed out already above, the change from $\hat{F}$ to
$F_\ell$ is by multiplication:
\[
 F_{\ell} (z,\lambda)= S(\lambda) \hat F(z,\lambda)
\]
 with $S(\lambda)  \in \LISU$.
 Note since  $\hat M_t$ is diagonalizable at $\l = 1$ for all $t \in \R$, 
 one can choose $S(\lambda)$ such that the monodromy 
 $M_t(\lambda) = S(\l) \hat M_t (\l) S(\l)^{-1}$ 
 of  $F_\ell$ is diagonal for $\lambda = 1$. 
 More precisely, since $\hat M_t(\lambda = 1)$ diagonalizable, we have two cases: 

 {\bf Case 1.} The eigenvalues of $\hat M_t(\lambda = 1)$ are both $1$.
 This means $\hat M_t(\lambda = 1) = \id$. Then 
 we can choose $S(\l) \in \LISU$ arbitrary. \\
{\bf Case 2.} The unimodular eigenvalues of $\hat M_t (\lambda = 1)$ are different. In this case there exists some matrix  $S\in \ISU$ such that 
 $S \hat M_t(\lambda=1) S^{-1}$ is diagonal.
 Inserting $\lambda$ and $\lambda^{-1}$ respectively off-diagonal into 
 $S$ 
 we obtain a matrix $S(\lambda) \in \LISU$ such that 
 $M_t (\lambda) = S(\lambda) \hat M_t(\lambda) S(\lambda)^{-1}$ 
 is diagonal for $\lambda = 1$.
 \footnote{$S$ may depend on $t$, however, 
  a straightforward computation shows that 
  $S = \di (u(t), u(t)^{-1}) \tilde{S}$ where $\tilde{S}$ is independent of $t$. Thus we can assume without loss of generality that $S$ is independent of $t$.}

 Altogether we obtain that $F_\ell (z,\lambda)= S(\lambda) \hat F(z,\lambda)$ 
 is a general extended frame for $g$ which has monodromy $M_t(\lambda)$, and 
 $M_t (\lambda = 1)$ is diagonal.
 As a consequence, 
 we obtain an $\R$-equivariant minimal 
 surface in $\Nil$ defined on some strip  $\St^\prime$  containing the real axis by applying 
 the Sym formula stated in Section \ref{sc:Sym}.
\begin{Remark}
 In both cases above the choice of ``initial condition''  
 $S \in \LISU$ is not unique. Here is what happens for different choices: 

 {\bf Case 1.} In the case of $\hat M_t (\lambda = 1) = \id$ 
 different initial conditions generally yield different equivariant minimal surfaces, 
 see Section \ref{subsc:translation}. 

 {\bf Case 2.} Assume the eigenvalues of $\hat M_t (\lambda = 1)$ 
 are unimodular and different.
 Let $\tilde S \in \LISU$ be  another initial condition such 
 that $\tilde S \hat M_t \tilde S |_{\lambda =1} = S \hat M_t S^{-1}|_{\l =1}$.
 Then  $\tilde S = \delta S$ with some loop $\delta \in \LISU$
 such that $\delta|_{\l =1 }$ is diagonal. 
 Let $F_\ell $ and $\tilde {F}_\ell$ be the corresponding 
 general extended frames associated with the initial conditions 
 $S$ and $\tilde S$, respectively. Then we obtain $\tilde {F}_\ell = \delta F_\ell$.
 Inserting $F_\ell $ and $\tilde {F}_\ell $ into the Sym formula, the resulting minimal surfaces
 are the same up to a rigid motion (see the proof of (b) of Theorem \ref{thm:symmetry} 
 for the computation). 
\end{Remark}

 \subsection{Degree one potentials} 
 In the last subsection we have seen that for every $\R$-equivariant minimal
  surface in $\Nil$ its normal Gauss map is an $\R$-equivariant harmonic map 
 into $\mathbb{H}^2$.
  These maps have been investigated in \cite{BRS:Min}. It will be more helpful to us 
 to follow the approach of \cite{Burstall-Kilian}, translated into our setting.
 Here is our rendering of results of these two papers which are particularly 
 relevant to this paper.  

 We consider  $f : \St \to \Nil$ to be an $\R$-equivariant 
 minimal surface relative to the one-parameter group $(\gamma_t, \rho_t)$,  
 $\gamma_t.z = z + t$, that is, 
\[
 f(\gamma_t.z) = \rho_t. f(z).
\]
 Let $g : \St \rightarrow \mathbb{H}^2$ denote its (non-holomorphic) harmonic  
 normal Gauss map and $\hat F$ an $\R$-equivariant 
 general extended frame for $g$ which 
 attains the value identity at $0$.  Let $\hat M_t(\lambda) \in \LISU$ denote the
  monodromy of $\hat F$.
 
 By following \cite[Section 3]{Burstall-Kilian}  in our setting and \cite{BRS:Min} 
 we obtain the following
 characterization of all $\R$-equivariant minimal surfaces in $\Nil$: 
 
 \begin{Theorem}\label{thm:equivariant}
 Every $\R$-equivariant non-holomorphic harmonic map $g: \St \rightarrow \Ha^2$
 associated 
 with an $\R$-equivariant minimal surface in $\Nil$ 
 can be obtained  from a constant holomorphic potential 
 $\eta = D dz $  of the form 
\begin{equation}\label{eq:Dlambda} 
 D \in \Lisu, \quad 
 D(\l) = \lambda^{-1} w_{-1} + w_0+ \lambda w_{1},
 \quad \det D(\l = 1)\geq 0,
 \quad (w_{-1})_{12} \neq 0,
\end{equation}
 where all $w_j$ are independent of $\lambda$ and $z$ and $(w_{-1})_{12}$ denotes the $(1, 2)$-entry of $w_{-1}$. 
 In particular $D$ has purely imaginary eigenvalues for $\lambda = 1$.
 
 Conversely, every constant $\eta = D dz$ as in \eqref{eq:Dlambda} with
 initial condition $S \in \LISU$ such that 
 $S D S^{-1}|_{\l =1}$  is diagonal,
 generates an $\R$-equivariant harmonic map $g: \St \rightarrow \Ha^2$ defined on some strip 
 $\mathbb S \subset \mathbb C$ parallel to the real axis, and,
 by the Sym-formula  \eqref{eq:symNil}, generates an $\R$-equivariant minimal surface in $\Nil.$ 
\end{Theorem}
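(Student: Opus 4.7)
The plan is to identify the generator of the monodromy one-parameter group as the sought constant potential. Starting from the assumption, we are given a normalized $\R$-equivariant extended frame $\hat F(z, \bar z, \lambda)$ with $\hat F(0, \lambda) = \id$ and
\[
\hat F(z+t, \bar z+t, \lambda) = \hat M_t(\lambda)\, \hat F(z, \bar z, \lambda) \quad (t \in \R).
\]
Since $t \mapsto \hat M_t(\lambda)$ is a smooth one-parameter subgroup of $\LISU$, I would define $D(\lambda) := \frac{d}{dt}\big|_{t=0}\hat M_t(\lambda) \in \Lisu$, and the theorem is reduced to showing that the constant potential $\eta = D(\lambda)\, dz$ has the stated properties, and conversely that any such potential produces an $\R$-equivariant minimal surface via Iwasawa decomposition and the Sym formula.

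For the degree and algebraic conditions on $D$, first I would differentiate the equivariance relation in $t$ at $t=0$ to obtain $(\partial_z + \partial_{\bar z})\hat F = D(\lambda)\hat F$, and evaluate at $z=0$. Since $\hat F(0, \lambda) = \id$, this yields
\[
D(\lambda) = (\hat F^{-1} \partial_z \hat F)|_{z=0} + (\hat F^{-1} \partial_{\bar z} \hat F)|_{z=0}.
\]
Invoking the standard Maurer--Cartan form of the extended frame of a harmonic map into $\Ha^2 = \ISU/\Uone$, namely $\hat F^{-1} d\hat F = \lambda^{-1} \alpha_{-1}'\, dz + \alpha_0 + \lambda\, \alpha_1'\, d\bar z$ with off-diagonal $\alpha_{\pm 1}'$ and diagonal $\alpha_0$, produces the required shape $D(\lambda) = \lambda^{-1} w_{-1} + w_0 + \lambda w_1$. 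The ellipticity condition $\det D(\lambda = 1) \geq 0$ comes from the construction principle of Section \ref{subsc:construction}: the monodromy $\hat M_t(\lambda = 1)$ must be diagonalizable with unimodular eigenvalues in order to yield an isometry of $\Nil$ via the Sym formula, which forces its generator $D(1) \in \isu$ to have purely imaginary eigenvalues. The off-diagonal condition $(w_{-1})_{12} \neq 0$ corresponds to non-vanishing of the leading off-diagonal entry of the normalized potential at $z=0$, which is equivalent to the Abresch--Rosenberg differential not vanishing there; this is our standing assumption that $g$ is non-holomorphic.

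For the converse, I would solve the constant-coefficient ODE $dC = C\, D\, dz$ with $C(0, \lambda) = \id$ explicitly as $C(z, \lambda) = \exp(z D(\lambda))$, which is entire in $z$ and $\LSL$-valued. Because $\id \in \mathcal{I}_e$ and $\mathcal{I}_e$ is open, there is an open strip $\St \subset \mathbb C$ containing $\R$ on which the Iwasawa decomposition $C = F V_+$ is valid. The relation $C(z+t, \lambda) = e^{tD(\lambda)} C(z, \lambda)$ together with uniqueness of the Iwasawa splitting yields $F(z+t, \bar z+t, \lambda) = e^{tD(\lambda)} F(z, \bar z, \lambda)$, so $F$ is $\R$-equivariant with monodromy $\hat M_t(\lambda) = e^{tD(\lambda)}$. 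Replacing $F$ by $F_\ell = S F$, with $S \in \LISU$ chosen so that $S D(1) S^{-1}$ is diagonal (possible precisely by $\det D(1) \geq 0$), and plugging the result into the Sym formula \eqref{eq:symNil} produces the desired $\R$-equivariant minimal surface in $\Nil$ whose normal Gauss map is $g$.

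The hard part will be the degree bound $D(\lambda) = \lambda^{-1} w_{-1} + w_0 + \lambda w_1$: an arbitrary generator of a smooth one-parameter subgroup of $\LISU$ may in principle carry a full Laurent series in $\lambda$, so the truncation to Fourier modes $\{-1, 0, 1\}$ is a genuinely harmonic-map phenomenon and requires careful use of the degree-one Maurer--Cartan structure of $\hat F$ together with the twisted loop-algebra reality conditions. A subsidiary technical issue in the converse is verifying that the open strip on which Iwasawa applies contains the full real line $\R$; for this I would exploit that $C(t, \lambda) = e^{tD(\lambda)} \in \LISU$ for all $t \in \R$ (since $D \in \Lisu$), which places the real line inside $\mathcal{I}_e$, and then use openness of $\mathcal{I}_e$ to enlarge to an open strip.
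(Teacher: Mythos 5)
Your proposal is correct and follows essentially the same route as the paper: the paper simply cites Burstall--Kilian for the degree-one Fourier form of $D$ (you sketch that argument by differentiating the equivariance relation and using the Maurer--Cartan structure of the extended frame), derives $\det D(\lambda=1)\geq 0$ from diagonalizability of the monodromy at $\lambda=1$ exactly as you do, and handles the converse via $\exp(zD)$, the Iwasawa decomposition and the Sym formula. One small correction: $(w_{-1})_{12}\neq 0$ is the non-vanishing of $e^{w/2}$ at the base point (the immersion/nowhere-vertical condition, which the paper states as ``$f$ is an immersion at $z=0$''), not the non-vanishing of the Abresch--Rosenberg coefficient $B$, which sits in the $(2,1)$-entry.
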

\begin{proof}
 Following the proof of  \cite[Section 3]{Burstall-Kilian} verbatim we obtain the first two statements of \eqref{eq:Dlambda}. 
 The last statement expresses the fact that we assume $f$ 
 to be an immersion at the base point ``$z = 0$''.
 Hence, to finish the proof of the first part of the claim we 
 only need to prove the statement about the eigenvalues of $D$.
 But for the monodromy $\hat M_t$ of $\hat F$ defined in the last subsection 
 we have  $\hat M_t(\lambda)  =  F_{\ell} (0,\lambda)^{-1} M_t(\lambda) 
 F_{\ell} (0,\lambda)$. 
 Thus $\hat M_t(\lambda = 1)$ has the same eigenvalues as 
 $M_t(\lambda = 1)$, where $M_t$ is the monodromy of a general
 extended frame $F_\ell$.
 But $\hat M_t(\lambda) =  
 \exp(t D(\lambda))$ by definition of $D(\lambda$), see \cite{Burstall-Kilian},  and we know that  $\hat M_t(\lambda = 1) $ is diagonalizable for all $t$. Hence 
 $D(\lambda = 1)$ has only purely imaginary eigenvalues and the claim follows.
 The proof of the second part of the claim follows  from  \cite[Section 3]{Burstall-Kilian}
 and the fact that we need diagonalizable monodromy in our situation.
\end{proof}

\begin{Remark} 
\mbox{}
\begin{enumerate}
 \item  The potential $\eta = D dz$ will be called the {\it degree one potential} 
 of an $\R$-equivariant minimal surface $f$.
 \item  The theorem above does not specify the size of the strip  
 $\St$
 in the second part of the theorem, since the Iwasawa decomposition
 of $\exp (z D)$ is not global.
 This issue will be discussed in the 
 forthcoming paper \cite{K:Explicit}.
\item Since $D \in \Lisu$, 
 the diagonalizablity condition in Theorem \ref{thm:equivariant}
 immediately implies that $\det D(\lambda =1) \geq 0$ and moreover, 
 when $\det D(\lambda =1) =0$, then $D (\lambda =1) =0$ follows. On the contrary, 
 in Proposition 
 \ref{prp:non-equivariant} when $\det D(\lambda =1) =0$ and $D (\lambda =1) \neq 0$ 
 or $\det D(\lambda =1) < 0$, we obtain non-equivariant minimal immersions 
 which have an equivariant normal Gauss map.
 
\end{enumerate}
\end{Remark}

 With the notation of Theorem \ref{thm:equivariant}, and the explanation of the construction principle in the previous subsection, the procedure of constructing
 $\R$-equivariant minimal surfaces
 in $\Nil$ from degree one potentials $D$ is as follows:
 
 Let us consider the solution $C$,  taking values in $\LSL$,  of the holomorphic ODE 
 $d C = C \eta$ with  $\eta = D dz$ and initial condition $\id$, 
Hence we obtain  $C(z, \l) = \exp (z D(\l)).$ Then 
we  perform an Iwasawa decomposition of $C$ near $z = 0.$ We obtain
\[
C = \hat{F} V_+, 
\]
 where $\hat{F}$ and $V_+$ take values in $\LISU$ and $\LSLP$, respectively.
 We then choose $S \in \LISU$ such that it diagonalizes 
  $D$ for $\lambda = 1$, that is $S(\l) \exp (t D (\l)) S(\l)^{-1}$ is 
 diagonal at $\l =1$.
 Since $ S(\l) \exp (t D (\l)) S(\l)^{-1}$ takes values in $\LISU$, we have
   for $F_\ell = S(\lambda) \hat{F}$
\[
 F_\ell(z+ t, \l) =M_t(\l) F_\ell (z, \l), \quad M_t (\l) = S(\l) \exp (t D (\l)) S(\l)^{-1}.
\] 
 Then by the construction, $F_\ell $ is a general extended frame of some 
 $\R$-equivariant harmonic map 
 $g:\mathbb S \to \mathbb H^2$. Moreover since $M_t(\l =1)$ is diagonal by 
 construction, the corresponding minimal surface $f$  in $\Nil$ is 
 also $\R$-equivariant:
\[
 f(z+t) = \rho_t .f(z) 
\] 
 where $\rho_t \in \isoo$.
\subsection{Monodromy matrices and  symmetries induced by $\R$-equivariant actions}
 Note that to compute $\rho_t$ for all $\R$-equivariant minimal surfaces, which are 
 obtained from degree one potentials, it is not necessary 
 to work out  the Iwasawa 
 decomposition explicitly. It suffices to know  the monodromy 
 $M_t(\l) = S(\l)\exp (t D (\l)) S(\l)^{-1}.$ 
 In particular, the transformation behaviour of the  $\R$-equivariant minimal 
 surface $f$ in $\Nil$ under the transformation $z\mapsto z + t$:
\[
 f(z+ t) =  \rho_t. f(z),\quad \rho_t =((p_t, q_t, r_t), e^{it \theta}), 
\] 
 is determined by $M_t$ explicitly. 
 In fact we consider a degree one potential 
 $\eta = D dz, \; z \in \C$ and $\lambda \in \C^*$, 
 and write  the matrix $D$ in the form
\begin{equation}\label{eq:X9}
 D(\l) = 
\lambda^{-1}
\begin{pmatrix} 
 0 & a \\ b & 0
\end{pmatrix}
+ 
\begin{pmatrix}
i c & 0 \\ 0 & -i c
\end{pmatrix}
+
\lambda
\begin{pmatrix} 
0 & \bar b \\
\bar  a & 0
\end{pmatrix}
=
\begin{pmatrix} 
ic & \lambda^{-1} a+ \lambda \bar b \\
\lambda^{-1} b + \lambda \bar a& -ic
\end{pmatrix},
 \end{equation}
 where $a \in \C^{\times}, b \in \C$, $c \in \mathbb R$
 and $\det D(\l = 1)=  c^2 -|a + \bar b|^2 \geq 0 $.
 Then Theorem \ref{thm:symmetry} actually tells us 
 how to compute $e^{i t \theta}$ and  $\rho_t$.  
  Let $\hat f$ be the immersion obtained by inserting 
 $F_\ell = S(\lambda) \hat{F}$ 
 into the Sym formula \eqref{eq:symNil} with $\lambda=1$. 
 Then the proof of Theorem \ref{thm:symmetry} shows that $\hat f$ changes 
 under  $\gamma_t$ as  follows
\begin{align*}
\hat f(\gamma_t. z)  =& \left.\left\{\ad(M_t) \hat f(z)  + 
 \frac{1}{2}[X_t^o, (\ad(M_t) f_{\Min}(z))^o]^d  + X_t^o +Y_t^d\right\}\right|_{\lambda =1},
 \end{align*}
 where $f_{\Min}$ is the map defined in \eqref{eq:SymMin}, and
\[
 X_t =  -i \lambda (\partial_{\lambda} M_t) M_t^{-1}, \quad
 Y_t =- \frac{i}{2} \lambda \partial_{\lambda} X_t 
 =  -\frac{1}{2}  \lambda \partial_{\lambda} (\lambda (\partial_{\lambda} M_t) M_t^{-1}).
\]
 As proved in Theorem \ref{thm:symmetry}, the resulting minimal surface 
 $f$ satisfies 
 \begin{align*} 
f(\gamma_t. z)  & = \rho_t. f(z) \quad \mbox{with}\;\;\rho_t = 
 \left( (p_t, q_t, r_t), \; 
 e^{it \theta} \right) 
 \intertext{where we set  $\theta = \det D|_{\l =1} = c^2 - |a + \bar b|^2 \geq 0$,}
\nonumber 
X_t|_{\lambda =1} &= \frac{1}{2}\begin{pmatrix}* & - q_t +  i p_t \\ - q_t -  i p_t & * \end{pmatrix}
\quad \mbox{and}\quad 
 Y_t|_{\lambda =1} = \frac{1}{2}\begin{pmatrix}- i r_t &*\\ * & i r_t \end{pmatrix}.
\end{align*} 
 We want to compute $X_t$ and $Y_t$ in more detail.
 For this we write $\lambda = e ^{iv}$, then for any function $H(\lambda)$ we have
$\dot H = \frac{d}{dv} H = i \lambda \frac{d}{d\lambda}H$. Thus 
\[
X_t = -\dot M_t M_t^{-1}, \quad Y_t = \frac{1}{2}\left\{\ddot M_t M_t^{-1} -
 (\dot M_t M_t^{-1})^2\right\}.
\]
 A straightforward computation shows the following corollary. 
\begin{Corollary}\label{coro:XY}
 If $M_t= S \hat M_t S^{-1}$, 
 then $X_t$ and $Y_t$ can be computed as
\begin{align*}
X_t|_{\l =1} &= -S\left([S^{-1}\dot{S}, \hat M_t]+\dot{\hat{M}}_t\right)  \hat M_t^{-1}S^{-1} |_{\l =1},\\
Y_t|_{\lambda =1} &= \frac{1}{2}
S\left([S^{-1}\dot{S}, L_t]+\dot{L}_t - L_t \hat M^{-1} L_t\right)  \hat M_t^{-1}S^{-1} |_{\l =1},
\end{align*}
 where we set $L_t = [S^{-1}\dot{S}, \hat M_t]+\dot{\hat{M}}_t$.
\end{Corollary}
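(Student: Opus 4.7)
\emph{Proof proposal.} The corollary is a direct double differentiation of $M_t = S\hat M_t S^{-1}$, and the plan is to carry it out after a convenient change of variable. I would first substitute $\lambda = e^{iv}$, so that the operator $i\lambda\partial_{\lambda}$ becomes the ordinary derivative $\dot{(\,\cdot\,)} = d/dv$. Under this substitution the defining formulas for $X_t$ and $Y_t$ recorded just above the corollary collapse to
\[
X_t = -\dot M_t M_t^{-1}, \qquad Y_t = -\tfrac{1}{2}\dot X_t = \tfrac{1}{2}\bigl(\ddot M_t M_t^{-1} - (\dot M_t M_t^{-1})^{2}\bigr),
\]
so the whole computation reduces to differentiating $\dot M_t M_t^{-1}$ once and then once more.

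For $X_t$, I would apply the product rule to $M_t = S\hat M_t S^{-1}$, using $\dot{(S^{-1})} = -S^{-1}\dot S\, S^{-1}$, and multiply on the right by $M_t^{-1} = S\hat M_t^{-1}S^{-1}$. The routine cancellations yield
\[
\dot M_t M_t^{-1} = \dot S S^{-1} + S\,\dot{\hat M}_t\hat M_t^{-1}S^{-1} - S\hat M_t(S^{-1}\dot S)\hat M_t^{-1}S^{-1}.
\]
Forcing the first term into the shape $S(\,\cdot\,)\hat M_t^{-1}S^{-1}$ by writing $\dot S S^{-1} = S(S^{-1}\dot S)\hat M_t\hat M_t^{-1}S^{-1}$, the content of the bracket becomes $(S^{-1}\dot S)\hat M_t - \hat M_t(S^{-1}\dot S) + \dot{\hat M}_t = L_t$, and the first claimed formula drops out.

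For $Y_t$, I would set $N_t := \dot M_t M_t^{-1} = S L_t\hat M_t^{-1}S^{-1}$ and differentiate term by term, again using $\dot{(S^{-1})} = -S^{-1}\dot S\, S^{-1}$ and $\dot{(\hat M_t^{-1})} = -\hat M_t^{-1}\dot{\hat M}_t\hat M_t^{-1}$. This yields four terms; to collect them into the form claimed by the corollary, the key step is to invoke the algebraic identity
\[
\hat M_t^{-1}\dot{\hat M}_t\hat M_t^{-1} = \hat M_t^{-1}L_t\hat M_t^{-1} + (S^{-1}\dot S)\hat M_t^{-1} - \hat M_t^{-1}(S^{-1}\dot S),
\]
which is immediate from the definition of $L_t$. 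Substituting this into the third term, rewriting $\dot S L_t\hat M_t^{-1}S^{-1} = S(S^{-1}\dot S)L_t\hat M_t^{-1}S^{-1}$, and noting that two of the resulting pieces cancel, the remaining terms reassemble into
\[
\dot N_t = S\bigl([S^{-1}\dot S,L_t] + \dot L_t - L_t\hat M_t^{-1}L_t\bigr)\hat M_t^{-1}S^{-1},
\]
and $Y_t = \tfrac12 \dot N_t$ then delivers the second formula of the corollary.

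The only real obstacle here is bookkeeping: keeping the order of the non-commuting factors $S,\dot S,\hat M_t,\dot{\hat M}_t$ and their inverses straight, and applying the correct rule for differentiating an inverse. The only non-trivial simplification is the displayed identity for $\hat M_t^{-1}\dot{\hat M}_t\hat M_t^{-1}$, which is a one-line consequence of the definition of $L_t$; beyond that there is no conceptual difficulty, since the corollary is a purely algebraic rewriting of the general formulas for $X_t$ and $Y_t$.
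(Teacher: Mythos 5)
Your computation is correct and is precisely the ``straightforward computation'' the paper invokes without writing out: the paper only records the reduction $X_t=-\dot M_t M_t^{-1}$, $Y_t=\tfrac12\{\ddot M_t M_t^{-1}-(\dot M_t M_t^{-1})^2\}$ in the variable $v$ with $\lambda=e^{iv}$ and then asserts the corollary. Your differentiation of $M_t=S\hat M_tS^{-1}$, the identification of the bracket with $L_t$, and the identity for $\hat M_t^{-1}\dot{\hat M}_t\hat M_t^{-1}$ all check out (and you correctly read the paper's $L_t\hat M^{-1}L_t$ as $L_t\hat M_t^{-1}L_t$).
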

Note, an inspection of the last two formulas yields that $X_t|_{\l =1}$ and $Y_t|_{\l =1}$, and therefore also $\rho_t$, 
can be computed from $D$.

\begin{Remark}\label{rm:eigenvalues}
 The condition $\det D (\l = 1)>0$, that is, the monodromy matrix 
 $M_t(\l) = S(\l)\exp (t D(\l)) S(\l)^{-1}$ has unimodular 
 eigenvalues at $\l =1$, 
 is purely local, since  $\det D (\l)$ takes non-positive values
 in general only for some $\l \in \mathbb{S}^1$.
\end{Remark}
\subsection{Translation invariant minimal surfaces}\label{subsc:translation}
 It is clear that all $\R$-equivariant minimal surfaces induce 
 some one-parameter group $\{\rho_t\}_{t \in \R} \subset \isoo$, and by Theorem \ref{thm:oneparameter}, 
 such one-parameter groups describe a helicoidal motion or a translation motion.
 Therefore in the following sections we characterize helicoidal and 
 translation invariant minimal 
 surfaces by the degree one-potentials in detail.

In this section we characterize translation invariant 
\eqref{eq:translationmotion} minimal surfaces in $\Nil$.
\begin{Theorem} \label{transinv}
 Let $f$ be a translation invariant minimal surface.
 Then $f$ is $\R$-equivariant. Moreover, the corresponding degree 
 one potential $\eta = D dz$
 as in \eqref{eq:X9} satisfies $D|_{\l = 1} = 0$.

 Conversely, let $\eta = D dz$ be as in \eqref{eq:X9} 
 a degree one potential satisfying 
 $D|_{\l = 1} = 0$. Then the resulting $\R$-equivariant 
 minimal surface is a translation invariant minimal surface. 
\end{Theorem}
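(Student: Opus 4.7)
The plan is to prove both implications by tracking the fiber rotation of $\rho_t$ through the correspondence between a translation motion $\rho_t \in \Nil \subset \isoo$, the trivial rotation it induces on the normal Gauss map, and the triviality of the monodromy $M_t(\lambda = 1)$ of the extended frame $F_{\ell}$. The two main tools are Theorem \ref{thm:symmetry} (relating the fiber-rotation angle of $\rho_t$ to the rotation angle of $g$) and the identification of monodromies in Theorem \ref{thm:equivariant} in the form $M_t(\lambda) = S(\lambda) \exp(tD(\lambda)) S(\lambda)^{-1}$.

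For the forward direction, suppose $f$ is translation invariant under $\rho_t = (t\alpha)(t\mathfrak{c}) \in \Nil$. Since translations act on $\Nil$ without fixed points, Theorem \ref{thm:X1}(2) realizes $f$ on a strip $\St$ with $\gamma_t.z = z + t$, so $f$ is $\R$-equivariant and by Theorem \ref{thm:equivariant} comes from a degree one potential $\eta = D\,dz$. Each $\rho_t$ lies in $\Nil$, so its fiber rotation is trivial; by Theorem \ref{thm:symmetry}(a) the normal Gauss map satisfies $g \circ \gamma_t = g$, and consequently $M_t(\lambda = 1) = \id$ for every $t \in \R$. Writing $M_t(\lambda = 1) = \exp\bigl(t\, S(1) D(1) S(1)^{-1}\bigr) = \id$ and differentiating in $t$ at $t = 0$ yields $S(1) D(1) S(1)^{-1} = 0$, hence $D(\lambda = 1) = 0$.

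For the converse, start with $\eta = D\,dz$ as in \eqref{eq:X9} satisfying $D(\lambda = 1) = 0$. The construction of Section \ref{subsc:construction} produces an $\R$-equivariant minimal surface $f$ with $f(z+t) = \rho_t . f(z)$ and $\rho_t = ((p_t, q_t, r_t), e^{it\theta}) \in \isoo$; since $D(\lambda = 1) = 0$ gives $M_t(\lambda = 1) = \id$, Theorem \ref{thm:symmetry}(b) (or equivalently the formula for $X_t|_{\lambda = 1}$, $Y_t|_{\lambda = 1}$ in Corollary \ref{coro:XY}) forces $e^{it\theta} = 1$, so $\rho_t \in \Nil$ for all $t$. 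From $\gamma_{t+s} = \gamma_t \gamma_s$ together with $f(\gamma_t . z) = \rho_t . f(z)$ we get $\rho_{t+s} = \rho_t \rho_s$ on $f(\St)$, and hence as isometries. Finally, a direct check using \eqref{eq:Nilmult} shows that any continuous one-parameter subgroup of $\Nil$ through the identity has the form $t \mapsto (ta_1, ta_2, tc)$, which is exactly a translation motion \eqref{eq:translationmotion}.

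The main obstacle I anticipate is the converse direction, and specifically the step of identifying $\rho_t$ as a translation motion rather than as a merely abstract one-parameter subgroup of $\isoo$. The cleanest route is first to exploit $M_t(\lambda = 1) = \id$ to conclude that the fiber rotation angle vanishes, and hence $\rho_t \in \Nil$; once inside $\Nil$, the structure of one-parameter subgroups of the Heisenberg group given by \eqref{eq:Nilmult} reduces the remaining claim to an elementary computation. The forward direction is cleaner because the hypothesis directly supplies the vanishing fiber rotation, while $\R$-equivariance is inherited from Theorem \ref{thm:X1}(2).
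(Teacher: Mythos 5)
Your proof is correct and follows essentially the same route as the paper's: both directions rest on the equivalence between the triviality of the fiber rotation of $\rho_t$, the identity $M_t(\lambda=1)=\id$, and $D(\lambda=1)=0$, with $\R$-equivariance supplied by Theorem \ref{thm:X1} and the degree one potential by Theorem \ref{thm:equivariant}. Your minor variations --- differentiating $\exp(tD(1))=\id$ at $t=0$ instead of invoking vanishing eigenvalues plus diagonalizability, and explicitly checking that one-parameter subgroups of $\Nil$ are exactly the translation motions \eqref{eq:translationmotion} (a point the paper leaves implicit) --- are sound and, if anything, slightly more complete.
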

 \begin{proof}
 Let $f$ be a translation invariant minimal surface. Then 
 it is clear that $f$ does not have a fixed point on the surface and
 thus it is an $\R$-equivariant surface by Theorem \ref{thm:X1} 
 and Theorem \ref{thm:Requivariant}
 and thus there exists a degree 
 one potential $D dz$ with $D$ as in \eqref{eq:X9}. 
 We also know $f(z+t) = \rho_t.f(z)$ with $\rho_t$ a one-parameter group  of isometries of  $\Nil$ as described in  (\ref{eq:translationmotion}).
 In general, the rotation part of a symmetry $\rho_t$ yields, up to a factor $1/2$ the eigenvalues of 
$M_t(\l)$ at $\lambda =1$. Under our assumption the rotation part of $\rho_t$ is trivial, whence the eigenvalues of $M_t(\l)$ are identically $1$ at $\lambda =1$. But then the eigenvalues of $D(\lambda = 1)$ vanish and since this matrix is diagonalizable, $D(\lambda = 1)=0$
follows.
 
 Conversely, let us start from some degree one potential $D$ 
satisfying  $D|_{\l =1} =0$. From this we infer that 
 $\hat M_t|_{\l =1} = \exp (t D)|_{\l =1} =\id$, 
 whence the resulting equivariant surface does not have a rotation part, 
 that is, $\theta =0$.
 Hence by Theorem \ref{thm:symmetry}, we conclude that the original 
 one-parameter group in $\isoo$  actually is contained in $\Nil$.
 Therefore the surface is a translation invariant minimal surface.
 \end{proof}
 We now compute  the one-parameter group 
 $\{\rho_t\}_{t \in \R}$ with $\rho_t =(p_t, q_t, r_t) \in \Nil$ 
 given by the degree one potential $D d z$ with $D|_{\l =1}=0$
 as follows.  Since $D|_{\l =1}=0$, we obtain that $D$ has the form 
\begin{equation*}
 D(\l) =
\begin{pmatrix} 
 0 & a(\lambda^{-1}  - \lambda)  \\
\bar a (-\lambda^{-1} + \lambda) & 0
\end{pmatrix}, \quad a \in \C^{\times}.
 \end{equation*}
 We know from Section \ref{subsc:construction} 
 that in the present case we can choose for $C(z, \l)$ 
 any in initial condition $S(\lambda)$ taking values in $\LISU$.

\begin{Example}
 We first choose the initial condition $S(\lambda)  \equiv \id$.
 Then $M_t = \exp (t D)$ and by Corollary \ref{coro:XY}, we have 
\[
\mbox{ $(1, 2)$-entry  of $X_{t}|_{\lambda =1}$}=
 2 i a t
\;\;\mbox{and}\;\;
\mbox{$(1, 1)$-entry  of $Y_{t}|_{\lambda =1}$}=0.
\]
 Thus  $\rho_t$ is given by 
\begin{equation}\label{eq:translationdirection1}
\rho_t = ((p_t, q_t, r_t), 1)= ((4 t \Re a, 4 t \Im a, 0 ), 1).
\end{equation}
 Thus the surface is a translation invariant minimal surface 
 with a direction $\rho_t$ given in \eqref{eq:translationdirection1}.
\end{Example}

\begin{Example}\label{ex:translation}
 We next normalize without loss of generality
 to $a =1$: Conjugate, if necessary,  $D$ by a diagonal matrix so that $a$ is changed into a positive real number. Then change the complex coordinates by scaling.
 Now we choose another initial condition $S$, namely
 $S|_{\l=1} =$ ``boost'',
\[
 S|_{\l =1} = 
 \begin{pmatrix}
 \cosh p & e^{i q}\sinh p \\ e^{-i q}\sinh p& \cosh p
 \end{pmatrix} \in \ISU, \quad (p, q \in \R).
\]
 Note, any $\tilde S \in \LISU$ can 
 be decomposed as 
\[
 \tilde S = \di (e^{i \ell}, e^{-i\ell }) S, \quad \left(\ell \in \R, \;\; 
 \tilde S 
 \in \LISU\right), 
\] 
 where $S|_{\l=1}$ is a boost. Then the resulting surface defined  by using 
 the initial condition $S$ is congruent to the surface given by the initial 
 condition $\tilde S$. Thus we only need to consider a boost as an initial condition.
 Without loss of generality we can assume $p\geq 0$ and $q \in [0, 2 \pi)$.
 Since the Iwasawa decomposition of $\exp (z D) = F V_+$ can be computed directly as 
\[
 \exp (z D) = 
\exp\begin{pmatrix} 0  & z \l^{-1} - \bar z \l  \\
		     -z \l^{-1} + \bar z \l  &0
		   \end{pmatrix}
\exp \begin{pmatrix} 0  &  (\bar z- z) \l  \\
		      (- \bar z+z) \l  &0
		   \end{pmatrix},
\] 
  a straightforward computation yields
\[
 S F|_{\lambda =1} = 
\begin{pmatrix}
 \cosh s \cosh p - i  e^{i q} \sinh s \sinh p &  i \sinh s \cosh p + e^{i q} \cosh s \sinh  p \\ 
-i \sinh s \cosh p + e^{-i q} \cosh s \sinh  p & \cosh s \cosh p + i  e^{-i q} \sinh s \sinh p
\end{pmatrix},
\]
 where $ s= 2 \Im (z)$. From this it is easy to see that as spinors $\psi_1$ 
 and $\psi_2$ at $\lambda =1$ one can choose
\[
\psi_1|_{\lambda =1} = \sqrt{i} ( \cosh s \cosh p - i  e^{i q} \sinh s \sinh p), \quad 
\psi_2|_{\lambda = 1} = \sqrt{i} (i \sinh s \cosh p + e^{i q} \cosh s \sinh  p ).
\]
 Then  another straightforward computation shows that 
 the conformal factor of the metric of the resulting surface is 
\[
 e^{u/2 } = 2 (|\psi_1|^2 + |\psi_2|^2 )= 2 m 
 \cosh \left(
 4 y + \cosh^{-1} \left(\frac{\cosh 2 p}{m} \right)
 \right),
\]
 where $m =\sqrt{(\cosh 2 p)^2 - (\sin q \sinh 2p)^2}$ and $z = x + i y$. 
 Here note that $m>0$.
 In particular if $q = 0$, then $ |\psi_1|^2 + |\psi_2|^2  = \cosh 2 p \cosh 4 y$.
 From this, for any pair $(p, q) \in [0, \infty) \times  [0, 2 \pi)$, there exists 
 a $(\tilde p, 0) \in [0, \infty) \times  \{0\}$ such that the conformal factors 
 are the same function up to a translation in $y$. Therefore the resulting 
 translation invariant minimal surfaces are parameterized by $p \in [0, \infty)$. 
 
 For the present case, where $q=0,$ the resulting translation invariant minimal 
 surface can be computed as follows:
 \[
  SF = S \begin{pmatrix}
  \cosh s & i \sinh s \\
  - i \sinh s & \cosh s
	 \end{pmatrix},
 \]
 where $s = 2 \Im (\lambda^{-1}z)$ and 
 $S|_{\lambda = 1} = \begin{pmatrix}
  \cosh p & \sinh p \\
  \sinh p & \cosh p
	 \end{pmatrix}$.
 Then the resulting surface $\hat f$  can be computed as in the proof of 
 Theorem \ref{thm:symmetry}
\begin{align*}
 \hat f(z) = 
\left(\ad (S) f_{\Min}(z)\right)^o  +
\left(\ad(S)  \left(-\frac{i}{2} \lambda \partial_{\lambda} f_{\Min}(z)\right)\right)^d
 + X ^o  +  \left(\frac{1}{2}[X, \ad(S) f_{\Min}(z)] 
 +Y\right)^{d}\Big|_{\lambda = 1},
\end{align*}
 where $X = - i \lambda (\partial_{\lambda} S) S^{-1}$,
 $Y = -\frac{i}2 \lambda \partial_{\lambda} X$,
\[
f_{\Min}(z)|_{\lambda = 1} = \begin{pmatrix} - \frac{i}2 \cosh (4 y) & 2 i x - \frac12\sinh (4y) \\ - 2 i x - \frac12\sinh (4y)& \frac{i}2 \cosh (4 y)	  \end{pmatrix}
\]
 and 
\[
-\frac{i}{2} \lambda \partial_{\lambda} f_{\Min}(z)|_{\lambda = 1} = 
\begin{pmatrix} - i x \sinh (4 y)&  - i y -x \cosh (4 y)\\ i y - x \cosh (4 y) &
 i x \sinh (4 y) \end{pmatrix}.
\]
  Let us consider the minimal surface  
 \[
\hat f_S (z) =  \left(\ad (S) f_{\Min}(z)\right)^o  +
\left(\ad(S)  \left(-\frac{i}{2} \lambda \partial_{\lambda} f_{\Min}(z)\right)\right)^d
\Big|_{\lambda = 1}.
 \]
 Then the term $ X ^o  +  \left(\frac{1}{2}[X, \ad(S) f_{\Min}(z)] 
 +Y\right)^{d}\Big|_{\lambda = 1}$ denotes the translation of 
 $\hat f_S (z)$. Moreover, the resulting minimal translation invariant 
 surface $f_S (z)$ is explicitly given by 
\begin{equation}\label{eq:translationmoduli}
 f_S (z) = \begin{pmatrix}
 4 \cosh (2 p) x+ \sinh (2 p) \cosh (4 y)\\ 
 \sinh (4 y) \\
 -2 \sinh (2 p) y + 2 \cosh(2 p) x \sinh (4 y) 
  \end{pmatrix}, 
\end{equation}
 where $z = x + i y$. It is easy to see that $f_S (z)$ satisfies 
 \eqref{eq:translation} and thus it is a translation invariant minimal surface.
\end{Example}

\begin{Remark}
 Note that $ f_S $ in \eqref{eq:translationmoduli}
 is exactly the same surface as the following one given 
 in \cite[Theorem 6]{FMP}, \cite[Part II, Example 1.8]{IKOS}:
\begin{equation}\label{eq:translation}
 x_3 = \frac{x_1 x_2}{2} + c 
\left( \frac{x_2 \sqrt{1 + x_2^2}}{2} + \frac{1}{2} \ln \left(x_2 + \sqrt{1 + x_2^2}\right)  \right)
\end{equation}
 with $c = \sinh 2 p \in \R$ 
 (see also \cite[Example 8.2]{Daniel:GaussHeisenbergw}).
 These surfaces are products of two appropriate curves 
 (see \cite[Part II, Example 1.8]{IKOS}, \cite{ILM}).
\end{Remark} 

\subsection{Helicoidal minimal surface}\label{sbsc:helicoid}
 Next we consider helicoidal surfaces,  
 in particular $\R$-equivariant surfaces  for which 
 $\rho_t$ is not contained entirely in $\Nil$. By Theorem \ref{thm:equivariant} and Theorem \ref{transinv} this is exactly 
 the case when the degree one potential 
 $D$ satisfies  $\det D|_{\l=1} = c^2 - |a + \bar b|^2 >0$.
 
 Computations with general coefficients $a,b,c$ are obviously quite laborious. Therefore we will restrict here to the case \eqref{eq:c} below.
 Note that coefficients can be changed/simplified  by using scalings of coordinates and/or immersions and one can move from one surface to another one in the same associated family etc. It is conjectured, that up to such manipulations the basic helicoidal surfaces can all be generated from the ones with $a=1$ and $c =2$.
 Therefore, we normalize $a$ and $c$ as
\begin{equation}\label{eq:c}
a =1 \quad \mbox{and}\quad c =2,
\end{equation}
 respectively. 
 It seems that we can prove that without loss 
 of generality $a$ and  $c$ can be normalized as in \eqref{eq:c}, however, 
 it is rather complicated and we postpone the proof until 
 the forthcoming paper \cite{K:Explicit}.

 Then the condition $\det D|_{\l=1}>0$ is equivalent to 
 that $b$ is inside the open disk 
\begin{equation}\label{eq:diskD}
\D = \left\{ b \in \C \;\;|\;\; |1 + b|^2 < 4 \right\},
\end{equation}
 that is, the disk with center $(-1, 0)$ and radius $2$ in the complex plane. Thus we have the following theorem.
\begin{Theorem}
 Let $f$ be a helicoidal minimal surface in $\Nil$.
 Then the corresponding degree one potential $\eta = D dz$
 satisfies $\det D|_{\l =1}>0$.
 Conversely,  let $\eta = D dz$ be a degree one potential 
 which satisfies condition \eqref{eq:c} and $\det D|_{\l =1}>0$.  
 Then there exists a helicoidal minimal surface with respect 
 to the axis through the point 
 $\alpha = a^h \in \Nil$ parallel to the $e_3$-axis with
 pitch $\cc$, where $\alpha$ and $\cc$ are defined by
\begin{equation}\label{eq:beta}
 \alpha = \frac{i (2 +\ell)(-6 +\bar b +  b(3 + 2 \Re b) + 4 \ell)}
 {\ell^2 (1 + b)\sqrt{4 - \ell^2}},
\end{equation}
\begin{equation}\label{eq:s}
\cc =  \frac{-2(3\Re b -   (\Re b)^2  - |b|^2  \Re b - |b|^2)}
 {\ell^4},
\end{equation}
 with $\ell = \sqrt{\det  D|_{\l=1}} = \sqrt{3 - 2 \Re b -|b|^2}<2$.
 
 Moreover, the minimal helicoidal surface becomes a rotational 
 surface $($for obvious reasons usually called {\rm catenoid}$)$
 if and only if the pitch $\cc$ vanishes, that is, if
 \begin{equation}\label{eq:catcond}
3 \Re b  -  (\Re b)^2 -  |b|^2 \Re b - |b|^2 =0
 \end{equation}
 holds.
\end{Theorem}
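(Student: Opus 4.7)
The strategy is to extract the one-parameter isometry group $\{\rho_t\}\subset\isoo$ associated to the surface from its degree one potential $D$ via the monodromy computation of Corollary \ref{coro:XY}, and to match the resulting triple of translation components and rotation angle with the normal form \eqref{eq:helicoidalmotion} of a helicoidal motion.

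For the first assertion, let $f$ be a helicoidal minimal surface. By Corollary \ref{cor:eqminsurf} it is $\R$-equivariant, so Theorem \ref{thm:equivariant} produces a degree one potential $D$ of the form \eqref{eq:X9}. A helicoidal motion has by definition a non-trivial rotation part $e^{it\theta}$ with $\theta \neq 0$, and the discussion preceding Corollary \ref{coro:XY} identifies this rotation angle in terms of $\det D|_{\l=1}$. Hence $\det D|_{\l=1} > 0$.

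For the converse, impose $a=1$, $c=2$ and assume $\det D|_{\l=1} = 4 - |1+b|^2 = \ell^2 > 0$. Then $D|_{\l=1}$ is traceless with eigenvalues $\pm i\ell$, so one can construct $S(\l) \in \LISU$ with $S|_{\l=1}$ unitarily diagonalizing $D|_{\l=1}$; the monodromy $M_t(\l) := S(\l)\exp(tD(\l))S(\l)^{-1}$ is then diagonal at $\l=1$. The Sym formula produces an $\R$-equivariant minimal surface, and since the rotation part of the associated $\rho_t$ is non-trivial ($\ell \neq 0$), Theorem \ref{thm:oneparameter} identifies $\rho_t$ as a helicoidal motion. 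To determine its axis and pitch, apply Corollary \ref{coro:XY} to compute $X_t|_{\l=1}$ and $Y_t|_{\l=1}$ as explicit functions of $b$ and $t$; read off $(p_t,q_t,r_t)$ from the off-diagonal entry of $X_t|_{\l=1}$ and the diagonal entry of $Y_t|_{\l=1}$; and match with the normal form \eqref{eq:helicoidalmotion}. Solving the matching equations for $\alpha$ (from the horizontal component $\alpha\cdot(1-e^{it\theta})$) and $\cc$ (from the central coordinate $\cc t - \tfrac{|\alpha|^2}{2}\sin t\theta$) under the normalizations $a=1$, $c=2$ yields the closed-form expressions \eqref{eq:beta} and \eqref{eq:s}. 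The catenoid statement is then immediate: a helicoidal motion reduces to a pure rotation iff its pitch vanishes, and by \eqref{eq:s}, $\cc=0$ is equivalent to the vanishing of the numerator, which is precisely the condition \eqref{eq:catcond}.

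\emph{Main obstacle.} The substantial effort lies in the explicit matrix computation producing \eqref{eq:beta} and \eqref{eq:s}. One must expand $\exp(tD(\l))$ to sufficient order in $\l$-derivatives at $\l=1$ (equivalently, compute $\dot M_t$ and $\ddot M_t$ in the notation of Corollary \ref{coro:XY}), conjugate by the $\l$-dependent diagonalizer $S(\l)$, and simplify using $a=1, c=2$. The calculation is mechanical but sufficiently intricate that bookkeeping is a serious issue; moreover, the fact that the final expressions for $\alpha$ and $\cc$ must come out independent of $t$ is a highly non-trivial internal consistency check, which itself confirms that the constructed $\rho_t$ genuinely forms a one-parameter group of helicoidal motions about a fixed axis with fixed pitch (and not merely a $t$-dependent family of isometries).
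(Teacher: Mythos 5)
Your proposal is correct and follows essentially the same route as the paper: both directions reduce to Theorem \ref{thm:Requivariant}/\ref{thm:equivariant} for existence of the degree one potential, the choice of a $\LISU$-valued diagonalizer $S(\lambda)$ of $D|_{\lambda=1}$ (the paper builds it explicitly from pseudo-orthonormal eigenvectors conjugated by $\di(\lambda^{1/2},\lambda^{-1/2})$ — note the diagonalization is with respect to the indefinite Hermitian form, not a unitary one), and the computation of $X_t|_{\lambda=1}$, $Y_t|_{\lambda=1}$ via Corollary \ref{coro:XY} followed by matching against the normal form \eqref{eq:helicoidalmotion} to extract $\alpha$ and $\cc$. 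The explicit evaluation you defer as the ``main obstacle'' is exactly the computation the paper records as the stated entries of $X_t$ and $Y_t$, so no new idea is missing.
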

\begin{proof}
 Clearly, any helicoidal minimal surface $f$ does not 
 have a fixed point on the surface and
 thus it is an $\R$-equivariant surface by 
 Theorem \ref{thm:Requivariant}.
 Thus the normal Gauss map $g$ is also equivariant and thus there exists 
 a degree one potential $\eta = D dz$ by Theorem \ref{thm:equivariant}. 
 Since $f$ it is not a translation minimal surface, the eigenvalues of the 
 monodromy matrix $M_t$ are unimodular and distinct, thus $D$ satisfies  
 $\det D|_{\l=1}>0$.

 Conversely, let $\eta = D dz$ be a degree one potential which satisfies 
  condition \eqref{eq:c} and $\det D|_{\l=1}>0$. 
  
 Then let $\Vec{e}_1$ and  $\Vec{e}_2$ denote orthonormal 
 (with respect to the indefinite  Hermitian inner product) 
 eigenvectors of $D|_{\l = 1}$. Then $(\Vec{e}_1,\Vec{e}_2 ) \in \ISU$ and 
 the matrix $S$, given by
 \begin{equation}\label{eq:C0}
 S^{-1} = \di (\l^{1/2}, \l^{-1/2}) (\Vec{e}_1, \Vec{e}_2) \di (\l^{-1/2}, \l^{1/2}),
 \end{equation}
 is contained in  $\LISU$.
 If we choose $S$ as an initial condition for the solution to $dC = C \eta$, then we obtain
\[
 M_t|_{\l =1} = S \exp (t D) S^{-1}|_{\l = 1} 
 = \di (e^{i t \ell } , e^{-i t \ell}).
\]
 Then by using Corollary \ref{coro:XY},
 $X_t = -i \l (\partial_{\l} M_t) M_t$ and 
 $Y_t =  \frac{1}{2}  \lambda \partial_{\lambda} (\lambda (\partial_{\lambda} M_t) M_t^{-1})$ can be computed as
 \begin{enumerate}
  \item[] the $(1, 2)$-entry of $X_t|_{\lambda = 1}= 
 \displaystyle \frac{i}{2} \alpha\left(1- e^{2 i \ell t}\right)$,
  \item[] the $(1, 1)$-entry of $Y_t|_{\lambda = 1}= \displaystyle -\frac{i}{2} \left(\cc  2 \ell t -\frac{|\alpha|^2}{2} \sin 2 \ell t \right)$,
 \end{enumerate}
 where $\alpha$, $\cc$ and $\ell$ are given in \eqref{eq:beta} and \eqref{eq:s}, 
 respectively.
 Thus in the relation  $f(\gamma_t .z ) = \rho_t . f(z)$ the one-parameter group $\rho_t$ can be computed:
 \begin{equation*}
 \rho_t = \left( \left( \Re (\alpha(1-e^{2 i \ell t })),  \;\Im (\alpha(1- e^{2 i\ell t})), \;\cc 2 \ell t -\frac{|\alpha|^2}{2} \sin 2 \ell t \right), \;e^{2 i\ell t}\right).
\end{equation*}
 From \eqref{eq:helicoidalmotion}, $\rho_t$ is a helicoidal motion
  with angle $ 2 \ell t$ through the point  $(\Re (\alpha), \Im(\alpha), 0)$ and the pitch $\cc$.
  
 Finally, from \eqref{eq:beta} and \eqref{eq:s} it is easy to see that 
 the helicoidal motion gives a rotation if and only if the pitch $\cc$ vanishes, 
 that is, \eqref{eq:catcond} holds. This completes the proof.
\end{proof} 

\begin{Remark}
\mbox{}
\begin{enumerate}
 \item 
 Let us consider the case $b = 0$ in \eqref{eq:X9} with $a =1$ and $c =2$. 
 It is easy to see that $\det D|_{\l =1}>0$ holds. Moreover, this case 
 was already considered 
 in \cite{DIKAsian}, and the resulting 
 surface is a \textit{horizontal plane} or a 
 \textit{horizontal umbrella} depending on the initial condition $S$. 
 Since we  are interested in the case of equivariant minimal surfaces, we consider
 only horizontal planes.
\item Let us consider the case $b = 1$ in \eqref{eq:X9} with $a =1$ and $c =2$. 
 It is easy to see that $\det D|_{\l =1}>0$ holds. Moreover, this case 
 was already considered 
 in \cite{DIKAsian}, and the resulting 
 surface is a \textit{horizontal plane}.

\item Let us consider the case $a = 1$ in \eqref{eq:X9} with $c =1-b$ and 
 $0< b< 1$.  It is easy to see that $\det D|_{\l =1}>0$ holds.
 It is known that the resulting spacelike CMC surface $f_{\Min}$  in $\Min$,
 see Figure 3 in \cite{BRS:Min}, is given by elementary functions.
 It has been called \textit{semitrough} \cite[page 98]{HTTW}
 and the corresponding minimal surface 
 is the same surface as the one given in Example 8.4 of \cite{Daniel:GaussHeisenbergw}. 
\end{enumerate}

\end{Remark}

\subsection{Minimal surfaces with $\R$-equivariant normal Gauss maps}
 As we have shown that equivariant minimal surfaces $\Nil$ have 
 equivariant non-holomorphic harmonic normal 
 Gauss maps and they induce the degree one potentials $\eta = 
 D \> d z$. Conversely, $\eta = D \> d z$
 with $D|_{\l=1} =0$ or $\det D|_{\l=1} > 0$ induces an equivariant minimal surface
 in $\Nil$.  In particular in the case of $\det D|_{\l=1}>0$, 
 the initial condition $S \in \LISU$ is important to construct an helicoidal 
 minimal surface, and it is essentially unique. If we choose an arbitrary 
 intial condition $S \in \LISU$, then the resulting minimal surface is no longer 
 equivariant.
\begin{Corollary}
 Let $\eta = D dz$ be a degree one potential 
 which satisfies the condition \eqref{eq:c} and $\det D|_{\l =1}>0$.  
 Then there exist a two-parameter family of minimal surfaces which are 
 symmetric with respect to $(\gamma, \rho)$ given by 
 $\gamma : z \mapsto z + 2 \pi/\sqrt{\det D}\big|_{\l=1}$ 
 and $\rho = ((p, q, r), 1)$ given in \eqref{eq:pqr}, that is, 
 the resulting surface is periodic, but it is not 
 equivariant in general.
\end{Corollary}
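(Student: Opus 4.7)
The plan is to exhibit the surfaces as generated by the degree one potential $\eta = D\,dz$ with an \emph{arbitrary} initial condition $S\in\LISU$, exploiting the fact that the eigenvalues of $D|_{\lambda=1}$ are $\pm i\ell$ with $\ell = \sqrt{\det D|_{\lambda=1}}$, so that $\exp(tD(\lambda))|_{\lambda=1}$ returns to the identity at the distinguished time $t = 2\pi/\ell$, independently of the choice of $S$.

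First, for an arbitrary $S\in\LISU$, I would write the solution to $dC = C\eta$ with $C(0,\lambda) = S(\lambda)$ in the closed form $C(z,\lambda) = S(\lambda)\exp(zD(\lambda))$. Under the shift $\gamma: z \mapsto z + T$ with $T = 2\pi/\ell$, the solution transforms as $C(z+T,\lambda) = M_T(\lambda) C(z,\lambda)$, where $M_T(\lambda) = S(\lambda)\exp(TD(\lambda))S(\lambda)^{-1}$. Since $D|_{\lambda=1}\in\isu$ is diagonalizable with purely imaginary eigenvalues $\pm i\ell$, we obtain $\exp(TD(\lambda))|_{\lambda=1} = \id$, and hence $M_T(\lambda=1) = \id$ regardless of $S$. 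Performing the Iwasawa decomposition $C = FV_+$ on the appropriate cell and applying Corollary~\ref{cor:closing}, the resulting minimal immersion $f$ satisfies $f(z+T) = \rho\cdot f(z)$ with $\rho = ((p,q,r),1)\in\isoo$: the rotational part vanishes because $M_T(\lambda=1)=\id$, and the translation vector $(p,q,r)$ is read off from $X^o|_{\lambda=1}$ and $Y^d|_{\lambda=1}$ using Corollary~\ref{coro:XY}, exactly as in the proof of Theorem~\ref{thm:symmetry}.

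For the dimension count I would argue that modifying $S$ by a $\lambda$-independent diagonal factor $\di(e^{i\mu},e^{-i\mu})$ only reparametrizes the strip and produces the same surface up to an ambient isometry of $\Nil$, while any $\lambda$-dependence in $S$ that is trivial at $\lambda = 1$ also alters the surface only by a rigid motion. The essential freedom therefore lives in $S(\lambda=1)\in\ISU$ modulo its diagonal $\Uone$-subgroup, i.e.\ in the homogeneous space $\ISU/\Uone \cong \Ha^2$, which supplies the two real parameters. To see that such surfaces fail to be equivariant in general, observe that $M_t(\lambda=1) = S(\lambda=1)\exp(tD(\lambda=1))S(\lambda=1)^{-1}$ is diagonal for every $t\in\R$ precisely when $S(\lambda=1)$ is one of the special initial conditions of \eqref{eq:C0} that conjugate $D|_{\lambda=1}$ into the diagonal subalgebra; for a generic $S$ this fails, so the surface possesses only the discrete period $T$ and not a continuous symmetry.

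The main subtlety I anticipate is the parameter counting, specifically verifying that the ``remaining freedom in $S$'' beyond the value $S(\lambda=1)$ really acts trivially on the moduli of surfaces up to ambient isometry of $\Nil$. This runs parallel to the analysis of initial conditions already performed in Section~\ref{subsc:construction} (Case 2), and should go through with only cosmetic modifications once the identity $M_T(\lambda=1) = \id$ and the resulting translational form of $\rho$ have been established.
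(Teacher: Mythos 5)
Your proposal is correct and follows essentially the same route as the paper: exploit that $D|_{\lambda=1}$ is diagonalizable with eigenvalues $\pm i\ell$, so $\exp(TD)|_{\lambda=1}=\id$ at $T=2\pi/\ell$ and hence $M_T(\lambda=1)=\id$ for any initial condition, then read off the translational $\rho=((p,q,r),1)$ from $X^o|_{\lambda=1}$ and $Y^d|_{\lambda=1}$ as in Theorem \ref{thm:symmetry}. Your parametrization of the family by $S(\lambda=1)\in\ISU$ modulo its diagonal $\Uone$-subgroup is the same two-parameter freedom the paper realizes concretely by taking $\hat S=B_0S$ with $B_0|_{\lambda=1}$ a boost $(p,q\in\R)$.
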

\begin{proof}
 We choose an initial condition $\hat S$ 
 in the construction of the resulting minimal surface $f$
 given by the degree one potential $\eta =D dz$
 such that 
\[
 \hat S = B_0 S \in \LISU, \quad \mbox{where}\quad 
 B_0|_{\l=1}= \begin{pmatrix}
 \cosh p & e^{i q}\sinh p \\ e^{-i q}\sinh p& \cosh p
 \end{pmatrix}\quad (p, q \in \R),
\] 
 and $S$ is the initial condition given in \eqref{eq:C0}.
 Then the monodromy matrix 
\[
 \hat M_t = B_0 S \exp (t D) S^{-1} B_0^{-1}
\]
 at $\l =1$ can be computed as $\hat M_t|_{\l=1} = B_0 \di (e^{i \ell t}, e^{-i \ell t})B_0^{-1}|_{\l=1}$, where $\ell = \sqrt{\det D}|_{\l=1}>0$. 
 Therefore, for $t_0 = 2 \pi/ \ell$, we obtain
 $\hat M_{t_0} (\l =1) = \id$, and thus the resulting surface is symmetric with 
 respect to $(\gamma, \rho)$, where 
 $\gamma : z \to z + 2 \pi/\ell$ and $\rho = ((p, q, r), 1)$ 
 and $p, q, r \in \R$ are given by
\begin{equation}\label{eq:pqr}
\hat X_{t_0}|_{\lambda =1} = \frac{1}{2}\begin{pmatrix}* & - q +  i p \\ - q -  i p & * \end{pmatrix},
\quad \quad 
\hat Y_{t_0}|_{\lambda =1} = \frac{1}{2}\begin{pmatrix}- i r &*\\ * & i r \end{pmatrix},
\end{equation}
 with
\begin{align*}
 \hat X_{t_0}|_{\l=1} &= - \dot {\hat M}_{t_0}{\hat M}_{t_0}^{-1}\big|_{\l =1}, \quad \quad 
 \hat Y_{t_0}|_{\l=1} = \frac{1}{2} 
 \left\{\ddot {\hat M}_{t_0} {\hat M}_{t_0}^{-1} -(\dot {\hat M}_{t_0}{\hat M}_{t_0}^{-1})^2\right\}\big|_{\l =1}.
\end{align*}
 Here $\cdot $ denotes the derivative with respect to $v,  \l = e^{i v}$.
 This completes the proof.
\end{proof}
 It is also natural to think about the remaining cases, that is, 
 the cases where $\det D|_{\l=1}=0$ with $D|_{\l=1}\neq 0$ or $\det D|_{\l=1}<0$.
 It is easy to see that the resulting normal Gauss maps from such  degree one
 potentials $\eta = D \>d z$ are $\R$-equivariant, however, the minimal surfaces in $\Nil$
 are not equivariant. 
\begin{Proposition}\label{prp:non-equivariant}
 Let $\eta = D dz$ be a degree one potential which satisfies the condition 
\begin{equation*}
\mbox{$\det D|_{\l=1}=0\;$ with $\quad D|_{\l=1}\neq 0$ or $\det D|_{\l=1}<0$.}
\end{equation*}
 Then the normal Gauss map of the resulting minimal surface in $\Nil$
 is equivariant, however the resulting surface itself does not have any symmetry.
\end{Proposition}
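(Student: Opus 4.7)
The plan is as follows. First, since $\eta = D\,dz$ is constant in $z$, the solution to $dC = C\eta$ with $C(0,\lambda) = \id$ is $C(z,\lambda) = \exp(zD(\lambda))$, so that $C(z+t,\lambda) = \exp(tD(\lambda))\,C(z,\lambda)$. Performing an Iwasawa decomposition $C = \hat F V_+$ and choosing any initial normalization $S(\lambda) \in \LISU$, the extended frame $F_\ell = S\hat F$ has monodromy
\[
 M_t(\lambda) = S(\lambda)\,\exp(t D(\lambda))\,S(\lambda)^{-1} \in \LISU
\]
under $z \mapsto z+t$. Evaluating at $\lambda = 1$ gives $g(z+t) = M_t(1).g(z)$, so the normal Gauss map $g$ is $\R$-equivariant, independently of the sign of $\det D|_{\lambda = 1}$.

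For the second assertion I would argue by contradiction. If $f(z+t) = \rho_t.f(z)$ held for some one-parameter group $\rho_t \in \isoo$, then Theorem \ref{thm:symmetry}(a) would force the induced action $M_t(1)$ on $\mathbb{H}^2$ to be a rotation about $0$, that is, $M_t(1) \in \Uone \subset \ISU$ for every $t \in \R$. Writing $M_t(1) = \exp(t\,\widetilde{D})$ with $\widetilde{D} := S(1) D(1) S(1)^{-1}$, this requires $\widetilde{D}$ to lie in the Lie algebra $\{\di(ia, -ia) : a \in \R\}$ of $\Uone$. Since $D(1)$ is traceless in $\isu$, its eigenvalues are $\pm\sqrt{-\det D(1)}$, and $\ISU$-conjugation on $\isu$ preserves the determinant; hence such a nonzero $\widetilde{D}$ exists only when $\det D(1) > 0$, which is precisely what the hypothesis excludes.

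Unpacking the two remaining cases, if $\det D(1) = 0$ with $D(1) \neq 0$, then $\widetilde{D}$ is a nonzero nilpotent element of $\isu$ and $\exp(t\widetilde{D})$ generates a one-parameter group of \emph{parabolic} isometries of $\mathbb{H}^2$; if $\det D(1) < 0$, then $\widetilde{D}$ has distinct nonzero real eigenvalues and $\exp(t\widetilde{D})$ generates a one-parameter group of \emph{hyperbolic} isometries. In neither case can $M_t(1)$ lie in $\Uone$ for $t \neq 0$, since the elliptic, parabolic, and hyperbolic conjugacy classes in $\ISU$ are mutually disjoint. By the contrapositive of Theorem \ref{thm:symmetry}(a), the surface $f$ admits no symmetry of the form $(z \mapsto z+t,\,\rho_t)$ with $\rho_t \in \isoo$, as claimed.

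The only delicate step is ensuring that the freedom in the initial condition $S(\lambda) \in \LISU$ cannot be exploited to change the type of the monodromy at $\lambda = 1$. This reduces to the standard observation that $\ISU$-conjugation on $\isu$ preserves the determinant and therefore preserves the elliptic/parabolic/hyperbolic trichotomy; once this is recorded, the rest of the argument is a direct combination of Theorem \ref{thm:symmetry}(a) with the real Jordan classification in $\isu$.
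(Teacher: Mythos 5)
Your proof is correct and follows essentially the same route as the paper: the equivariance of the normal Gauss map is immediate from the construction, and the absence of a symmetry is obtained from Theorem \ref{thm:symmetry} by showing that the monodromy at $\lambda=1$ cannot act as a rotation about $0\in\mathbb{H}^2$. In fact your treatment of the borderline case $\det D|_{\lambda=1}=0$, $D|_{\lambda=1}\neq 0$ is more careful than the paper's one-line justification (which loosely asserts that the eigenvalues are not unimodular, even though a nonzero nilpotent $D(1)$ gives eigenvalues equal to $1$); the correct obstruction there is, as you say, that $\exp(tD(1))$ is parabolic rather than elliptic, and your appeal to the invariance of the elliptic/parabolic/hyperbolic trichotomy under $\ISU$-conjugation closes that gap cleanly.
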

\begin{proof}
 From the construction, it is clear that the normal Gauss map is 
 equivariant.
 Since the monodromy matrix given by the potential $\eta$ 
 does not have unimodular eigenvalues, thus the resulting surface 
 does not have any symmetry by Theorem \ref{thm:symmetry}.
\end{proof}

\appendix

\section{Preliminary results on $\Nil$, surfaces in $\Nil$ and flat connections for the harmonic normal Gauss map}\label{app:Pre}

\subsection{Heisenberg group $\Nil$}\label{subsc:nil3}
 As in \cite{DIKAsian} we realize the three-dimensional Heisenberg group $\Nil$
 by $\R^3$ with the group multiplication
 \begin{equation*}
 (a_1,a_2,a_3) \cdot (x_1,x_2,x_3) =  \left(a_1 + x_1, a_2 + x_2, a_3 + x_3 + 
 \frac{1}{2} (a_1 x_2 - a_2 x_1)\right).
 \end{equation*}
 and the left-invariant metric 
\[
 ds^2 = dx_1^2 + dx_2^2 + \left(dx_3 + \frac{1}{2} (x_2 d x_1 - x_1 dx_2)\right)^2.
\] 
 The Lie algebra of $\Nil$ will be denoted $\mathfrak{nil}_3$.
 The standard basis $e_1, e_2, e_3$ of $\mathfrak{nil}_3 \cong \R^3$ 
 induces left-invariant vector fields which will be denoted by $E_1, E_2,E_3$,
 see \eqref{eq:Ej}.
 By $\D$ we will always denote a non-compact simply-connected Riemann surface.
 Usually this will mean $\D$ the unit disk or the complex plane.
\subsection{Surfaces in $\mathrm{Nil}_3$}\label{subsc:SurfaceTheory}
 Let $f:\Rim \to \mathrm{Nil}_3$ be a conformal immersion of a Riemann surface.

 We consider the $1$-form $f^{-1}\pz fdz=\varPhi{d}z$ on a simply connected domain 
 $\mathbb{D}\subset \Rim$ (or the universal cover of $\Rim$) that takes values 
 in the complexification $\mathfrak{nil}_3^{\mathbb{C}}$ of the 
 Lie algebra $\mathfrak{nil}_3$. With respect to the natural 
 basis $\{e_1,e_2,e_3\}$ of $\mathfrak{nil}_3$, we expand $\varPhi$ as 
 $\varPhi =\sum_{k=1}^3 \phi_k e_k$ and
 obtain  that $(\phi_1)^2+(\phi_2)^2+(\phi_3)^2=0,$ since $f$ is conformal. 
 Then there exist
 complex valued functions
 $\psi_1$ and $\psi_2$ such that
 \begin{equation*}
 \phi_1 = (\overline{\psi_2})^2 - \psi_1^2, \;\;
 \phi_2 = i ((\overline{\psi_2})^2 + \psi_1^2), \;\;
 \phi_3 = 2 \psi_1 \overline{\psi_2},
 \end{equation*}
 where $\overline{\psi_2}$ denotes the complex conjugate of $\psi_2$.
 It is easy to check that 
 $\psi_{1}\sdz$ and $\psi_{2}\sdzb$ 
 are well defined on $\Rim$. 
 More precisely, $\psi_{1}\sdz$ and $\psi_{2}\sdzb$ are respective 
 sections  of the spin bundles $\varSigma$ and $\bar{\varSigma}$ over $\Rim$.
  
 The sections $\psi_{1}\sdz$ and 
 $\psi_{2}\sdzb$ are called the 
 \textit{generating spinors} of the conformally immersed 
 surface $f$ in $\mathrm{Nil}_3$. 
 The conformal factor $e^{u}$ of the 
 induced metric $\langle df,df\rangle$ and 
 the left translated vector field 
 $f^{-1}N$ 
 of the unit normal $N$ to $\mathfrak{nil}_3$ 
 can be expressed  by the generating spinors as follows:
 \begin{equation}\label{eq:metric}
 e^{u} =4 (|\psi_1|^2+|\psi_2|^2)^2,
 \end{equation}
and
 \begin{equation}\label{eq:Nell}
 f^{-1} N = \frac{1}{|\psi_1|^2+ |\psi_2|^2}
 \left( 
 2 \Re(\psi_1 \psi_2) e_1 +2 \Im(\psi_1 \psi_2)e_2 + 
 (|\psi_1|^2 -|\psi_2|^2)e_3
 \right),
 \end{equation}
 where $\Re$ and $\Im$ denote the real and 
 the imaginary part of a complex number respectively.
 We define a function $h$ by
 \begin{equation} \label{support}
 h =e^{u/2}\langle f^{-1} N, e_3 \rangle  =  2(|\psi_1|^2-|\psi_2|^2).
 \end{equation}
 Then we get a section $h\sdz\sdzb$ of 
 $\varSigma \otimes \bar{\varSigma}$. 
 This section is called the \textit{support} of $f$.  
 The coefficient function $h$ is called the \textit{support function} 
 of $f$ with respect to $z$. The support function $h$ is represented as 
 $h=e^{u/2}\cos \vartheta$. Here $\vartheta$ denotes the angle between 
 $N$ and the Reeb vector field $E_3$
 (called the \textit{contact angle} of $f$). 
 From \cite[Proposition 3.3]{DIKAsian}, it is known that 
 $f$ has support zero at $p $, that is,  $h(p) =0$ if and only if 
 $E_3$ is tangent to $f$ at $p$. Thus a surface $f$ is said to be 
 \textit{nowhere vertical} if it is nowhere tangent to $E_3$. 
 
 In this paper we will usually assume that 
 any surface considered in this paper is nowhere vertical.
 In this case, the map $f^{-1} N$ has a nowhere vanishing third component.
 We usually normalize things so that this component is positive. 
\begin{Remark}
 From \eqref{eq:metric} it follows that $f$ has branch points exactly where
 $\psi_1(p) = \psi_2 (p) = 0$ holds. 
 From  (\ref{support}) it follows that $f$ is vertical exactly, where
 $|\psi_1(p)| = |\psi_2 (p)| $ holds.
 Hence a nowhere vertical surface has no branch points and thus will be an immersion.
 \end{Remark}
 \subsection{The normal Gauss map}\label{subsc:normalGauss}
 We identify the Lie algebra $\mathfrak{nil}_3$ of $\Nil$ 
 with Euclidean $3$-space $\mathbb R^3$
 via the natural basis $\{e_1, e_2,e_3\}$.
 Under this identification, 
 the map $f^{-1} N$ can be considered as a map into the 
 unit $2$-sphere $\mathbb S^2\subset \mathfrak{nil}_3$. 
 We now consider the \textit{normal Gauss map} $g$ of the surface $f$
 in $\Nil$. The map $g$ is defined as the composition of the stereographic 
 projection $\pi$ from the south pole with
 $f^{-1} N$,  that is, $g = \pi \circ f^{-1} N: \D \to \C \cup \{\infty\}$
 and thus, applying the stereographic projection to  $f^{-1} N$ 
 defined in \eqref{eq:Nell}, we obtain
 \begin{equation*}
 g= \frac{\>\>\psi_2\>\>}{\overline{\psi_1}} \;.
 \end{equation*}
 Note that the unit normal $N$ is represented 
 in terms of the normal Gauss map $g$ as 
 \begin{equation}\label{normalGausstounitnormal}
 f^{-1}N=\frac{1}{1+|g|^2}
 \left(
 2\Re (g) e_1+2\Im (g) e_2+(1-|g|^2)e_3
 \right).
 \end{equation}
 The formula (\ref{normalGausstounitnormal}) implies that 
 $f$ is nowhere vertical if and only if $|g|<1$ or $|g|>1$,
 and our usual assumptions imply that always $|g|<1$ holds.
\begin{Remark} 
 The normal Gauss map of a vertical plane satisfies 
 $|g|=1$. Conversely, if the normal Gauss map $g$ of a conformal 
 minimal immersion $f$ satisfies $|g| \equiv 1$, then $f$ is a vertical plane.  
\end{Remark} 

 
 \subsection{Nonlinear Dirac equation and the 
 Abresch-Rosenberg differential}\label{subsec:BT}
 It is known that the generating spinors $\psi_1$ and $\psi_2$ satisfy the following 
\textit{nonlinear Dirac equation}, see \cite{BT:Sur-Lie, DIKAsian} for example:
 \begin{equation}\label{Dirac1}
\slashed{D} \begin{pmatrix} 
\psi_1
\\ 
\psi_2
\end{pmatrix} 
:=
\begin{pmatrix}
\pz\psi_{2}+\mathcal{U}\psi_1
\\
-\pzb\psi_1+\mathcal{V}\psi_2
\end{pmatrix} 
=
\left(
\begin{array}{c}
0
\\
0
\end{array}
\right),
\end{equation}
 where 
 \begin{equation}\label{Dirac2}
 \mathcal U = \mathcal V = - \frac{H}{2}e^{u/2} + \frac{i}{4}h, 
 \end{equation}
 and $e^{u/2}$ and $h$ are expressed by $\psi_1$ and $\psi_2$ via 
 \eqref{eq:metric} and \eqref{support}.  \footnote{The potential in \cite{BT:Sur-Lie}  differs from ours  
 by multiplication $-2$.}
 The complex function $\mathcal U (=\mathcal V)$ 
 is called the \textit{Dirac potential} of the nonlinear 
 Dirac operator $\slashed{D}$.

 The \textit{Hopf differential} $A \, dz^2$ is 
 the $(2,0)$-part of the second fundamental form of $f$ derived from $N$.
 It is easy to see that $A$ can be expanded as 
\begin{equation*}
 A =  2 (\psi_1 \pz \overline{\psi_2} - \overline{\psi_2} \pz \psi_{1})
      +4 i \psi_1^2 (\overline{\psi_2})^2.
\end{equation*}
Next, define $B$ as the complex valued function 
 \begin{equation*}
 B = \frac{1}{4}(2 H + i) \tilde A, \;\; 
 \;\;\; \mbox{where} \;\;\; \tilde A = 
 A + \frac{\phi_3^2}{2 H +i}.
 \end{equation*}
 Here $A$ and $\phi_3$ are respectively the Hopf differential and 
 the $e_3$-component of $f^{-1} \pz f$ for $f$ in $\mathrm{Nil}_3$. 
 The complex quadratic differential $\tilde A\, dz^2$ 
 will be called the \textit{Berdinsky-Taimanov differential}. 
 It is known that $2 Bdz^2$ is 
 the original Abresch-Rosenberg differential \cite{Fer-Mira2, Abresch-Rosenberg}.
 In this paper, by abuse of notation, we call $B d z^2$ 
 the \textit{Abresch-Rosenberg differential}. 
 We define a function $w$ using the Dirac potential $\mathcal U(=\mathcal V)$ 
 by 
 \begin{equation}\label{def-exp(w/2)}
 e^{w/2} =\mathcal{U} 
 = \mathcal V = - \frac{H}{2}e^{u/2} + \frac{i}{4}h.
 \end{equation}
 Here, to define the complex function $w$, 
 we need to assume that the mean curvature $H$ and 
 the support function $h$ do not have any common zero.
 For nonzero constant mean curvature surfaces 
 this is no restriction, however, 
 for minimal surfaces, this assumption is equivalent to 
 that $h$ never vanishes, that is, that these surfaces are
 nowhere vertical. 
 The opposite, minimal vertical surfaces which are always vertical 
 are just vertical planes, as explained above.
 \begin{Theorem}[\cite{Ber:Heisenberg}]
 Let $\D$ be a simply connected domain in $\C$ and $f: \D \rightarrow \Nil$
 a conformal immersion and $w$ the complex function defined in 
 \eqref{def-exp(w/2)}. 
 Then the vector $\tilde \psi = (\psi_1, \psi_2)$
 satisfies the system of equations
 \begin{equation}\label{eq:Lax-Niltilde}
 \pz \tilde \psi = \tilde \psi \tilde U , \;\;
 \pzb \tilde \psi =  \tilde \psi \tilde V, \;\;
 \end{equation}
 where 
 \begin{equation}\label{eq:U-V1}
 \tilde U =
 \begin{pmatrix}
 \frac{1}{2} \pz w  + \frac{1}{2} \pz H e^{-w/2+u/2}&
 - e^{w/2} \\ 
 B e^{-w/2} & 0
 \end{pmatrix}, \;\;
 \tilde V =
 \begin{pmatrix}
 0 & - \bar B e^{-w/2}\\
 e^{w/2} & \frac{1}{2}\pzb w +\frac{1}{2} \pzb H e^{- w/2 + u/2}
 \end{pmatrix}.
 \end{equation}
 Here $e^w$  never vanishes on $\D$.

 Conversely, every vector solution $\tilde \psi$ 
 to  \eqref{eq:Lax-Niltilde}, where $e^w$  never vanishes on $\D$ and
 where  \eqref{def-exp(w/2)}, \eqref{eq:U-V1},
 \eqref{eq:metric} and \eqref{support} are satisfied,
  is a solution to the nonlinear Dirac equation
 \eqref{Dirac1} with \eqref{Dirac2} and therefore is induced by some conformal 
 immersion into $\Nil$.
\end{Theorem}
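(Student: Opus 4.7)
My plan is to observe that among the four scalar equations encoded in \eqref{eq:Lax-Niltilde}, exactly two coincide with the nonlinear Dirac system \eqref{Dirac1}, so that the converse direction is essentially free, while the forward direction reduces to identifying the two additional scalar coefficients that describe $\pz\psi_1$ and $\pzb\psi_2$. These latter I would derive from the Dirac equation together with the defining identities for $e^{u/2}$, $h$, $w$, and $B$.

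For the converse direction, I would unpack the row-vector product $\pz\tilde\psi = \tilde\psi\tilde U$ with $\tilde U$ as in \eqref{eq:U-V1}: its second component reads $\pz\psi_2 = -e^{w/2}\psi_1$, and similarly $\pzb\tilde\psi = \tilde\psi\tilde V$ gives $\pzb\psi_1 = e^{w/2}\psi_2$. Since $\mathcal{U} = \mathcal{V} = e^{w/2}$ by \eqref{def-exp(w/2)}, these two equations are precisely the components of \eqref{Dirac1}, so the converse is immediate.

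For the forward direction, I would make the ansatz $\pz\psi_1 = a\psi_1 + b\psi_2$ and $\pzb\psi_2 = c\psi_1 + d\psi_2$ with unknown functions $a, b, c, d$, to be identified with the $(1,1)$- and $(2,1)$-entries of $\tilde U$ and the $(1,2)$- and $(2,2)$-entries of $\tilde V$, respectively. To find $a$, I would apply $\pz$ to the identity $e^{w/2} = -\tfrac{H}{2} e^{u/2} + \tfrac{i}{4}h$, use \eqref{eq:metric} and \eqref{support} to express $e^{u/2}$ and $h$ as quadratic forms in $\psi_1, \psi_2, \bar\psi_1, \bar\psi_2$, and then eliminate $\pz\bar\psi_1 = \overline{\pzb\psi_1} = \overline{\mathcal{V}\psi_2}$ and $\pz\psi_2 = -\mathcal{U}\psi_1$ via the Dirac equation. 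Upon collecting terms, $\pz w$ emerges in closed form and one reads off $a = \tfrac{1}{2}\pz w + \tfrac{1}{2}(\pz H) e^{-w/2+u/2}$; the coefficient $d$ is obtained by the analogous computation with $\pz$ and $\pzb$ interchanged.

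To extract $b$, I would compute the Hopf-differential coefficient $A = 2(\psi_1\pz\bar\psi_2 - \bar\psi_2\pz\psi_1) + 4i\psi_1^2\bar\psi_2^2$ with the ansatz inserted, then apply the definition $B = \tfrac{1}{4}(2H+i)\bigl(A + \phi_3^2/(2H+i)\bigr)$ together with $\phi_3 = 2\psi_1\bar\psi_2$, and solve the resulting linear equation for $b$; the expected outcome is $b = Be^{-w/2}$. The coefficient $c$ then follows either by taking complex conjugates of the equation for $\pz\psi_1$ or by running the same argument with the Abresch--Rosenberg differential replaced by its conjugate. The hardest step will be the algebraic bookkeeping in the $b$-computation: one must verify that the spinor monomials coming from the ansatz combine with the $\phi_3^2/(2H+i)$ correction term to produce precisely the expression $Be^{-w/2}$, and that the $\pz H$ contributions from the diagonal entries do not contaminate this off-diagonal identification.
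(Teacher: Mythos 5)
Your converse direction is correct: reading off the second component of $\pz\tilde\psi=\tilde\psi\tilde U$ and the first component of $\pzb\tilde\psi=\tilde\psi\tilde V$ gives $\pz\psi_2=-e^{w/2}\psi_1$ and $\pzb\psi_1=e^{w/2}\psi_2$, which are exactly the two equations of \eqref{Dirac1} once the constraint $e^{w/2}=\mathcal U=\mathcal V$ from \eqref{def-exp(w/2)} is imposed. (For context: the paper gives no proof of this theorem; it is quoted from Berdinski{\u\i}, so there is no in-paper argument to compare against, and your proposal must stand on its own.)

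The forward direction, however, has a genuine gap. At each point you must determine the two complex numbers $\pz\psi_1$ and $\pz\overline{\psi_2}=\overline{\pzb\psi_2}$, and you propose to do so from (i) the $\pz$-derivative of $e^{w/2}=-\tfrac{H}{2}e^{u/2}+\tfrac{i}{4}h$ and (ii) the spinor formula for the Hopf differential together with the definition of $B$. Item (ii) is legitimate geometric input, but it yields only \emph{one} linear relation, $\psi_1\pz\overline{\psi_2}-\overline{\psi_2}\pz\psi_1=\tfrac{A}{2}-2i\psi_1^2\overline{\psi_2}^{\,2}$. Item (i) is circular: $w$ is \emph{defined} through $e^{u/2}$ and $h$, which are themselves the quadratic spinor expressions \eqref{eq:metric} and \eqref{support}, so differentiating that identity is just the chain rule --- the quantity $\pz w$ on one side is the very same combination of $\pz\psi_1$ and $\pz\overline{\psi_2}$ that appears on the other, and no independent constraint emerges from which the diagonal coefficient could be ``read off''. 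You are thus left with one equation for two unknowns, and neither the diagonal nor the off-diagonal entries of $\tilde U$, $\tilde V$ can be pinned down. The missing ingredient is the Gauss--Weingarten structure equations of the immersion in $\Nil$: one must compute $\pz\varPhi=\pz(f^{-1}\pz f)$ using the Levi-Civita connection of the left-invariant metric (with the cross terms coming from $[e_1,e_2]=e_3$), the mean curvature, and the second fundamental form. Since $\phi_1+i\phi_2=-2\psi_1^2$ and $\phi_1-i\phi_2=2\overline{\psi_2}^{\,2}$, those equations determine $\pz(\psi_1^2)$ and $\pz(\overline{\psi_2}^{\,2})$, hence $\pz\psi_1$ and $\pz\overline{\psi_2}$ individually, and only then do the stated entries follow (the Hopf-differential identity then serves as a consistency check rather than as the sole source of information). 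Without this input the forward implication is not established.
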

\subsection{Loop groups}\label{sc:loopgroups}
 Here we recall definitions of various loop groups, see 
 \cite{PreS:LoopGroup} in detail.
 Let $\SL$ be a special linear Lie group of degree $2$, and 
 define a twisted loop group of $\SL$, that is,
 a space of maps from $\mathbb S^1$ into $\SL$:
\begin{equation}
\LSL = \{g: \mathbb S^1 \to \SL \;|\; g(-\l) = \sigma g(\l)\}, 
\end{equation}
 where $\sigma = \ad(\sigma_3)$. We induce a suitable topology (such 
 as a Wiener topology) on $\LSL$ such that $\LSL$ becomes an infinite
 dimensional Banach 
 Lie group. Then we can define several subgroups of $\LSL$:
\begin{equation}
 \LISU  =
  g \in \LSL \;|\; \sigma_3 \left(g(1/ \bar \l)^{t}\right)^{-1} \sigma_3 = g(\l) \},
\end{equation}
\begin{equation}
\LSLPM=\{g \in \LSL \;|\; \mbox{g can be
extended holomorphically to $D^{\pm}$} \}, 
\end{equation}

 where $D^{+}$ (resp. $D^{-}$) denotes 
 inside (resp. outside) of the unit disk on the extended  
 plane $\C \cup \{\infty\}$. These subgroups $\LISU$, $\LSLP$ and 
 $\LSLM$ are called 
 the twisted loop group of $\ISU$, the ``positive'' and 
 the ``negative'' loop groups of $\SL$, respectively.
 By $\LSLPI$ we denote the subgroup of elements of $\LSLP$ 
 which take the value identity at zero. Similarly, by $\LSLMI$
 we denote the subgroup of elements of $\LSLM$ which take
 the value identity at infinity.
\subsection{Flat connections}\label{sbsc:Flat}
 Recall that from our assumptions we know  that the unit normal 
 $f^{-1} N$ is upward, that is, the $e_3$-component of $f^{-1} N$ is positive.
 We assume from now on that 
\[
 \mbox{$H = $ constant.}
\] 
 Hence the matrices 
 $\tilde U$ and $\tilde V$ in \eqref{eq:U-V1} above simplify. 
 Next we introduce a parameter $\lambda$ as follows
 
 \begin{equation} 
 \tilde U^{\l} =
 \begin{pmatrix}
 \frac{1}{2} \pz w  & - \l^{-1}e^{w/2} \\ 
 \l^{-1} B e^{-w/2} & 0
 \end{pmatrix}, \;\;
 \tilde V^{\l} =
 \begin{pmatrix}
 0 & - \l \bar B e^{-w/2}\\
 \l e^{w/2} & \frac{1}{2}\pzb w
 \end{pmatrix}.
 \end{equation}
 At this point we state a result which is crucial for the rest of the paper.
 \begin{Theorem}
 Assume that the mean curvature $H$ is constant.
 Then equation \eqref{eq:Lax-Niltilde} is solvable if and only 
 if the matrix zero-curvature condition 
 \begin{equation}\label{matrixzero-curvaure}
 \tilde U_{\bar{z}}^\lambda - \tilde V_z^\lambda 
 = [\tilde U^\lambda, \tilde V^\lambda]
 \end{equation}
 holds.
 \end{Theorem}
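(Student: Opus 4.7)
The plan is to recognize the theorem as a standard instance of the Maurer-Cartan/Frobenius equivalence for right-action linear systems. Set $\alpha^{\lambda} := \tilde U^{\lambda}\,dz + \tilde V^{\lambda}\,d\bar z$. Inspection of the explicit expressions for $\tilde U^{\lambda}$ and $\tilde V^{\lambda}$ (using that $H$ is constant, so the diagonal entries simplify appropriately) shows that $\alpha^{\lambda}$ is a one-form valued in $\sl$. The system \eqref{eq:Lax-Niltilde} is then the single row-vector equation $d\tilde\psi = \tilde\psi\,\alpha^{\lambda}$ on the simply-connected domain $\mathbb D$, and the claim is that this is solvable if and only if the Maurer-Cartan identity $d\alpha^{\lambda}+\alpha^{\lambda}\wedge\alpha^{\lambda}=0$ holds. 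Unpacking the latter in the $(dz,d\bar z)$-basis gives exactly \eqref{matrixzero-curvaure}.

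For the necessity direction, I would first upgrade \eqref{eq:Lax-Niltilde} to a matrix equation $dF^{\lambda} = F^{\lambda}\alpha^{\lambda}$ by choosing two linearly independent row solutions (these exist locally by applying the standard linear ODE theory first along a $z$-segment and then along a $\bar z$-segment, starting from two independent initial covectors at a basepoint). With $F^{\lambda}$ thus invertible, Clairaut's theorem forces $\pz\pzb F^{\lambda} = \pzb\pz F^{\lambda}$, and a direct computation of the two sides from $\pz F^{\lambda} = F^{\lambda}\tilde U^{\lambda}$ and $\pzb F^{\lambda} = F^{\lambda}\tilde V^{\lambda}$ yields
\begin{equation*}
F^{\lambda}\bigl(\pzb \tilde U^{\lambda} - \pz \tilde V^{\lambda} - [\tilde U^{\lambda}, \tilde V^{\lambda}]\bigr) = 0,
\end{equation*}
so invertibility of $F^{\lambda}$ delivers \eqref{matrixzero-curvaure}.

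For the sufficiency direction, I would invoke the Frobenius (or Poincaré) lemma for flat matrix-valued one-forms on a simply-connected domain: given the Maurer-Cartan identity for $\alpha^{\lambda}$ and any initial value $F^{\lambda}(z_0,\bar z_0) \in \SL$, there is a unique solution $F^{\lambda}:\mathbb D \to \SL$ to $dF^{\lambda} = F^{\lambda}\alpha^{\lambda}$, obtained by integrating along any path from $z_0$; path-independence is precisely the content of flatness combined with simple connectedness of $\mathbb D$. Any row of such a matrix $F^{\lambda}$ is then a row-vector solution $\tilde\psi$ to \eqref{eq:Lax-Niltilde}.

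The point at which I expect the most attention to be needed is the handling of the spectral parameter: both sides of \eqref{matrixzero-curvaure} are Laurent polynomials in $\lambda$ (the off-diagonal blocks of $\tilde U^{\lambda}$ and $\tilde V^{\lambda}$ carry $\lambda^{\pm 1}$, the diagonal parts carry $\lambda^{0}$), so the zero-curvature identity splits into separate identities at each power $\lambda^{k}$, $k\in\{-2,-1,0,1,2\}$. The abstract Frobenius argument above applies uniformly in $\lambda\in\C^{*}$, so the equivalence ``solvability $\Longleftrightarrow$ zero curvature'' is insensitive to this $\lambda$-bookkeeping; however, the real geometric content sits in those coefficients --- the $\lambda^{0}$-coefficient reproduces the structure equation for $w$ coming from \eqref{def-exp(w/2)}, while the $\lambda^{\pm 1}$-coefficients encode the holomorphicity $\pzb B = 0$ of the Abresch-Rosenberg differential under $H=\mathrm{const}$. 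I would record this reading of the Laurent coefficients as a remark but would not let it enter the proof of the theorem itself, which is purely the Frobenius equivalence.
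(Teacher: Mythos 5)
Your sufficiency direction is fine and agrees in substance with what one would do (flatness of $\alpha^{\lambda}=\tilde U^{\lambda}dz+\tilde V^{\lambda}d\bar z$ on a simply connected domain yields a fundamental matrix solution, and any row solves \eqref{eq:Lax-Niltilde}). The problem is the necessity direction. There you propose to ``upgrade'' the row-vector system to a matrix equation $dF^{\lambda}=F^{\lambda}\alpha^{\lambda}$ by producing two linearly independent row solutions via integration ``first along a $z$-segment and then along a $\bar z$-segment.'' That step is circular: a function built by integrating $\partial_x\tilde\psi=\tilde\psi(\tilde U^{\lambda}+\tilde V^{\lambda})$ along one axis and then $\partial_y\tilde\psi=i\tilde\psi(\tilde U^{\lambda}-\tilde V^{\lambda})$ along the transverse direction satisfies the second equation everywhere but the first one only on the initial segment, \emph{unless} the zero-curvature condition already holds. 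The existence of a fundamental (invertible) matrix of solutions with arbitrary initial data is exactly equivalent to Frobenius integrability, so it cannot be assumed in the direction where integrability is the conclusion. The actual hypothesis only provides a \emph{single} nontrivial row solution $\tilde\psi=(\psi_1,\psi_2)$, and from Clairaut one only gets $\tilde\psi\,Q=0$ with $Q=\tilde U^{\lambda}_{\bar z}-\tilde V^{\lambda}_{z}-[\tilde U^{\lambda},\tilde V^{\lambda}]$; since the left kernel of a nonzero $2\times 2$ matrix can be one-dimensional, this does not force $Q=0$.

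The paper closes precisely this gap by a structural computation that your argument never uses: writing out $Q$ explicitly for the matrices \eqref{eq:U-V1} (with $H$ constant), the off-diagonal entries cancel identically and $Q$ is a trace-free \emph{diagonal} matrix, say $Q=\operatorname{diag}(d,-d)$. Then $\tilde\psi\,Q=(d\psi_1,-d\psi_2)=0$, and since $(\psi_1,\psi_2)$ vanishes only on a nowhere dense set, $d\equiv 0$ follows, i.e.\ the single-solution integrability condition is genuinely equivalent to the full matrix identity \eqref{matrixzero-curvaure}. This computation is the content of the theorem; without it (or without some substitute supplying a second independent solution) your necessity argument does not go through. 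A small secondary point: the curvature is a Laurent polynomial in $\lambda$ of degrees $\{-1,0,1\}$ only, not $\{-2,\dots,2\}$, since $[\tilde U^{\lambda},\tilde V^{\lambda}]$ pairs a $\lambda^{-1}$ off-diagonal block with a $\lambda^{+1}$ off-diagonal block into a $\lambda^{0}$ diagonal term; this does not affect your argument but the bookkeeping in your closing remark should be corrected.
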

 \begin{proof}
 Writing out the integrability condition for \eqref{eq:Lax-Niltilde} 
 we obtain an equation, where  $(\psi_1, \psi_2)$ is multiplied to 
 $\tilde U_{\bar{z}}^\lambda - \tilde V_z^\lambda - [\tilde U^\lambda, 
 \tilde V^\lambda]$.
 Working out the equation \eqref{matrixzero-curvaure} and subtracting 
 one side from the other, we obtain a diagonal matrix of trace $0$. 
 Since  $(\psi_1, \psi_2)$ only vanishes on a nowhere dense set, 
 the integrability condition is equivalent to that the diagonal coefficients vanish.
 But this is the claim.
 \end{proof}
 From \eqref{matrixzero-curvaure}, it follows that there exists a matrix valued function 
 $\tilde F:\mathbb D \to \LGL$ such that $\tilde F^{-1}d \tilde F 
 = \tilde U^{\l} dz + \tilde V^{\l} d\bar z$.
 
 For the purposes of this paper it will be convenient to change the matrices
  $\tilde U^{\l}$ and $\tilde V^{\l}$ 
 by the gauge $\di ( e^{-w/4}, e^{-w/4})$. We thus obtain the equation
\begin{equation}\label{eq:U-V1lambda}
 \alpha^{\l} = U^{\l} dz + V^{\l} d\bar z
\end{equation}
 with coefficient matrices
 \begin{equation} \label{naturalMC}
 U^{\l} =
 \begin{pmatrix}
 \frac{1}{4} \pz w  &
 - \l^{-1}e^{w/2} \\ 
 \l^{-1}B e^{-w/2} &  -\frac{1}{4} \pz w
 \end{pmatrix},  \quad 
 V^{\l} =
 \begin{pmatrix}
  -\frac{1}{4} \pzb w & - \l \bar B e^{-w/2}\\
 \l e^{w/2} & \frac{1}{4}\pzb w
 \end{pmatrix}.
 \end{equation} 
 Note that this system of equations still is integrable, that is, satisfies the integrability condition 
  \eqref{matrixzero-curvaure} for the new coefficient matrices.
 Using this matrix zero-curvature condition. 
 we can show that  minimal surfaces in $\Nil$ are characterized in terms of 
 their normal Gauss map as follows. We first recall Theorem 5.3 in \cite{DIKAsian}.
 \begin{Theorem}\label{thm:mincharact-1}
 Let $f : \D \to \Nil$ be a conformal immersion which is 
 nowhere vertical and $\alpha^{\l}$
 the $1$-form defined in \eqref{eq:U-V1lambda}.
 Moreover, assume that the unit normal $f^{-1} N$ is upward.
 Then the following statements are equivalent{\rm:}
 \begin{enumerate}
 \item $f$ is a minimal surface.
 \item $d + \alpha^{\l}$ is a family of flat connections of the trivial bundle $\D \times  \ISU$.
\item \Red{The normal Gauss map $g$ for $f$ is a non-holomorphic harmonic 
  map into the hyperbolic $2$-space $\mathbb{H}^2=\mathrm{SU}_{1,1}/\mathrm{U}_1$.}
 \end{enumerate}
 \end{Theorem}
\begin{Remark}
\mbox{}
\begin{enumerate}
\item \Red{
 The equivalence (1) $\Leftrightarrow$ (3) has been proven by 
 \cite{Figueroa}, see also \cite{Ino, Daniel:GaussHeisenbergw}. 
 We have given a new proof for this reulst in \cite{DIKAsian}.
}

 \item The statement that the non-holomorphic harmonic normal Gauss map into 
$\mathbb{H}^2$ implies the item $(2)$ also holds and will be discussed in greater generality below.
 \item We also note that the non-holomorphicity of the normal Gauss map derives
 from the fact that the upper right corner of the $(1, 0)$-part of $\alpha^{\l}$ (that is, 
 $U^{\l}$)  is purely imaginary, and never vanishes, since the surface is nowhere
  vertical.\end{enumerate}
  \end{Remark}
 By $(2)$ of Theorem \ref{thm:mincharact-1}, there exists an
 $F : \mathbb D \to \LISU$ such that $F^{-1} d F = \alpha^{\l}$.
 The argument leading to   $(5.8)$ in the proof
 of Theorem 5.3, \cite{DIKAsian}, shows that actually  the following  matrix, written in terms of 
 the generating spinors, solves this equation for $\lambda = 1$:
 \begin{equation} \label{special frame}
 F|_{\l =1} = 
\frac{1}{\sqrt{|\psi_1|^2-|\psi_2|^2}} 
 \begin{pmatrix}
 \sqrt{i}^{-1} \psi_1 & 
 \sqrt{i}^{-1} \psi_2 \\ 
 \sqrt{i} \;\overline{\psi_2} & 
  \sqrt{i}\; \overline{\psi_1}
 \end{pmatrix}
 \end{equation}
 The frame $F$ as  given in \eqref{special frame} will be called an
 \textit{extended frame} of the minimal surface $f$.
 \begin{Remark}
 The formula above can be rewritten by using a ``hidden symmetry'':
 In view of \eqref{def-exp(w/2)}
 we obtain for minimal surfaces in $\Nil$ the relation
\begin{equation} \label{special relation}
 e^{w/2} = \frac{i}{2}(|\psi_1|^2-|\psi_2|^2),
\end{equation}
where by \eqref{naturalMC} the right upper corner of the $(1,0)$-part of  
 $\alpha$, the Maurer-Cartan form of the moving frame 
 $F(z, \bar z, \lambda)$ for $\lambda = 1$, is $ - e^{w/2}$.
 \end{Remark}
 
 \section{The loop group construction of harmonic maps from $\D$
  into $\mathbb{H}^2=\mathrm{SU}_{1,1}/\mathrm{U}_1$.}\label{app:Loop}
 In Appendix \ref{app:Pre} we have considered a minimal immersion into $\Nil$ and have 
  recalled the construction of an $\mathbb S^1$-family $\alpha^\lambda$ 
 of flat connections.
  Moreover, we have pointed out that for such a minimal immersion  
  the normal Gauss map $g$ is a harmonic map into the unit disk
 $\mathbb{H}^2=\ISU/\Uone$. More precisely, $g$ is obtained from $f^{-1}N$ by a stereographic 
 projection (where this is carried out in $\isu \cong \R^3$ which is considered 
 as a Euclidean space).

 In this section we briefly recall  other realizations of the hyperbolic $2$-space $\mathbb{H}^2$
 and how all harmonic maps into $\mathbb{H}^2=\mathrm{SU}_{1,1}/\mathrm{U}_1$ can be constructed by the loop group method.
 This construction is one of the two main tools for the construction of all minimal surfaces in $\Nil$
 by the loop group method.

\subsection{Realizing Minkowski $3$-space $\Min$ as the usual 
Euclidean $3$-space $\R^3$}\label{subsc:Realizing}
  To relate the setting of the theory of harmonic maps into 
 $\mathbb{H}^2=\ISU/\Uone$  
 to our setting we need to consider a natural isomorphism between
 the usual Euclidean space $\mathfrak{nil}_3 \cong \R^3$ 
 with natural basis $e_1, e_2, e_3,$ and 
 the Minkowski $3$-space  $\Min$ realized by the Lie algebra  
 $\mathfrak{nil}_3$ with natural basis  $\mathcal{E}_1$, $\mathcal{E}_2$, 
 and $\mathcal{E}_3$, spelled out explicitly below.
 
 The Killing form of $\isu$ 
 induces a Lorentz metric  on $\isu$. 
 Thus we regard $\mathfrak{su}_{1,1}$ as 
 the Minkowski 3-space $\Min$.
 The basis of $\Min \cong \isu$
\begin{equation}\label{eq:basis}
 \mathcal{E}_1 = \frac{1}{2} \begin{pmatrix} 0 & i \\ -i &0 \end{pmatrix}, \;\;
 \mathcal{E}_2 = \frac{1}{2} \begin{pmatrix} 0 & -1 \\ -1 & 0 \end{pmatrix}\;\;
 \mbox{and}\;\;\;
 \mathcal{E}_3 = \frac{1}{2} \begin{pmatrix} -i & 0\\ 0 &i \end{pmatrix}.
\end{equation}
  is an orthonormal basis of $\isu=\Min$ with timelike vector 
 $\mathcal{E}_3$ relative to the non-degenerate bilinear form 
  $\langle A , B \rangle = 2 \tr AB$.
  
 An explicit isometry  $ J: \Min \rightarrow \isu $ is
 given by the map 
\[
(x_1,x_2,x_3)^t \mapsto x_3 \mathcal{E}_3 - x_1  \mathcal{E}_2 - x_2  \mathcal{E}_1.
\]
 It is easy to verify that that this map is an isomorphism of Lie algebras, where 
 the Lie algebra structure of $\Min$ is given by the usual cross product.
 
 Note that the group $\ISU$ acts on $\isu=\Min$ by the adjoint representation.
 In particular, the timelike vector $\mathcal{E}_3$ generates
 the rotation group $\mathrm{SO}_2 \cong \ad (\exp(t \mathcal{E}_3 ))$ 
 which acts isometrically on 
 $\Min$ by rotations around the $x_3$-axis. On the other hand, the
 isometries $\exp(t\mathcal{E}_1)$ and $\exp(t\mathcal{E}_2)$ are so called
 \textit{boosts}.
\subsection{Realizing the left translated unit normal 
 and the normal Gauss map in $\isu$}\label{sbsc:Relization}
 From Section \ref{subsc:normalGauss} 
 we know that $f^{-1}N$  and $g$ are realized in the same $3$-dimensional 
 vector space which we will consider to be the natural $\R^3$ as well as to be the 
 three-dimensional Minkowski space  $\Min$. 
 These (identical vector) spaces will be provided with 
 the usual non-degenerate bilinear forms relative to the natural
 basis $e_1, e_2, e_3$ respectively and with $e_3$ the timelike vector
 in the Minkowski case.
 
 By what was said in Section \ref{subsc:normalGauss} we know that $f^{-1}N$ 
 takes values in the two sphere $\mathbb S^2$ relative to the definite metric, 
 actually in the upper hemisphere $\mathbb{S}_+^2$, and $g$ takes values in 
 $\mathbb{H}^2$, 
 realized by the hyperbolic $2$-space $\mathbb H^2$ as the unit disc (in the definite metric)  
 in the complex plane $\C$ perpendicular (in both metrics) to the $e_3$-direction.

 
  The stereographic projection  $\pi: \mathbb S^2_{+} \rightarrow \mathbb{H}^2$ 
 (relative to the definite metric) maps $f^{-1}N$ bi-holomorphically onto 
 $\mathbb{H}^2$. The group $\ISU$ acts on $\C$ by M\"obius transformations, 
 leaving $\mathbb{H}^2$ invariant,
 and this action transforms via the stereographic projection to a group 
 of conformal transformations on $\mathbb S^2$ which leaves  $\mathbb{S}_+^2$
 invariant.
  
 It is well known that the linear fractional action of $\ISU$ on $\mathbb{S}_+^2$
 just mentioned is induced by the standard linear action of 
 $\mathrm{SO}^+(2,1)$ on  $\Min$.

 More precisely, for  a concrete realization one considers the forward light
 cone with vertex at the  ``south pole''  $-e_3$ on the $x_3$-axis and 
its  boundary intersecting 
 the $x_1x_2$-plane in the unit circle.
 Then the stereographic projection from the south pole to the $x_1x_2$-plane and 
  the stereographic projection to the hyperboloid

 \[
 \mathbb{Q}^2 = \left\{v \in \Min\,|\,\langle v,v\rangle = -1,
 \>\> \langle v,e_3\rangle<0\right\}
 \]
inside the open forward light cone give diffeomorphisms, 
 and an isometry from the unit disk $\mathbb{H}^2$
 to the hyperbolid $\mathbb{Q}^2$.
 
 These projections are equivariant relative to the group actions of $\ISU$ discussed above.
 In particular, the action of $\ISU$ is linear and implemented by the adjoint representation.
\subsection{General extended frames of harmonic maps into $\mathbb{H}^2=\ISU/\Uone$}
 In the last sections we have considered three \textit{diffeomorphic}
 space forms of negative curvature,
 $\mathbb{H}^2$, $\mathbb S_+^2$, and $\mathbb{Q}^2$.
 
 For a given minimal surface $f:\D \rightarrow \Nil$ we have correspondingly three normal Gauss maps: 
\begin{itemize}
 \item The normal Gauss map $g:\D \rightarrow  \mathbb H^2$, 
 see definition above. 
 
 \item  The translated unit normal 
 $f^{-1}N = \pi^{-1} \circ g : \D \rightarrow \mathbb S_+^2$, with $\pi$ a 
 stereographic projection, see above.
 
 \item  The corresponding map $N_{\Min}: \D \rightarrow  \mathbb{Q}^2$. 
\end{itemize}
 It is known \cite{DIKAsian} that the normal Gauss map is harmonic. 
 Since the other maps are obtained from $g$ by equivariant 
 conformal diffeomorphisms, they are harmonic as well.
  
 By \cite{DPW}, each of these harmonic maps can be 
 obtained by the loop group method:
 Let us explain briefly how this works the case of $g$.
  Here we have as target space $\mathbb{H}^2=\mathrm{SU}_{1,1}/\mathrm{U}_1$. 
 First one chooses some frame $F:\D \rightarrow \ISU$, 
 which is unique up to right multiplication by an element in $\Uone$.
 Note that this implies $g = F$ mod $\Uone$ in our case.
 Then one introduces (as usual, see for example Section \ref{sbsc:Flat}
 for more details) 
 the loop parameter  into the Maurer-Cartan form $\alpha = F^{-1}dF$, 
 arriving at $\alpha^\lambda$ as above.
 Solving  $F^{-1}dF = \alpha^\lambda$ one obtains what we call for the time being a 
 ``general extended frame $F(z, \bar z, \lambda)$''. 
 From this extended frame one obtains the 
 (meromorphic) 
 \textit{normalized frame} $F_-(z,  \lambda)$ 
 by a Birkhoff decomposition (see for example Section \ref{subsc:GWR}
 for more details).
 
 The Maurer Cartan form $\eta_-(z, \lambda) = F_-(z, \bar z, \lambda)^{-1} dF_-(z, \lambda)$
 is called the \textit{normalized potential}.
 This is a meromorphic one-form defined on $\D$ which has a special form, 
 see for example \eqref{eq:xim}.
 
 Starting, conversely, from any  normalized potential as stated above, 
 one can reverse the steps: first solve an ODE, then find a $\lambda$-dependent 
 frame $F \in \LISU$ by an Iwasawa decomposition (see Step {\rm I\!I} in 
 Section \ref{subsc:GWR}) and finally one obtains a harmonic 
 map into $\mathbb H^2$ by projection to the quotient space $\mathbb H^2$.  
\begin{Definition}\label{dfn:generalext}
 The extended frames defined above have not restrictions 
 on the initial conditions nor on any special additional property. 
 They are therefore called the \textit{general extended frames}.
\end{Definition}

\begin{Theorem} \label{Theorem B.2}
 For a harmonic map $g: \D \rightarrow \mathrm{SU}_{1,1}/\mathrm{U}_1$  any two general 
 extended frames  $F(z, \bar{z}, \lambda)$ and   
 $\tilde{F}(z, \bar z, \lambda)$ for $g$ satisfy
 \[
   \tilde{F}(z, \bar z, \lambda) = A(\lambda) F(z, \bar z, \lambda) k(z, \bar z)
 \]
 with some $A(\lambda) \in \LISU$ satisfying $A(\lambda = 1) = \id$ and 
 $k \in  \mathrm{U}_1$.
 \end{Theorem}
 \begin{proof}
 Let  $F(z, \bar z, \lambda)$ and   $\hat{F}(z, \bar z, \lambda)$  
 be general extended frames of $g$, that is, 
 $F(z, \bar z, \lambda = 1)$ and   $\tilde{F}(z, \bar z, \lambda = 1)$ are frames of $g$. Therefore 
  $F(z, \bar z, \lambda = 1) = \tilde{F}(z, \bar z, \lambda = 1) k(z, \bar z)$ for some $k \in U_1$.
  Now the claim follows.
 \end{proof}
 \begin{Remark}
\mbox{}
\begin{enumerate}
 \item An extended frame $F$ of a minimal surface $f$ as in 
 \eqref{special frame} is of course 
 a general extended frame of the harmonic map induced by 
 the normal Gauss map of $f$. Moreover, two extended frames  
 $\tilde F$ and $F$ of $f$  are related by 
\begin{equation}\label{eq:twoextmin}
 \tilde F = A F, 
\end{equation}
 with some $A(\lambda) \in \LISU$ satisfying $A(\lambda = 1) = \id$.
 Here $k \in \Uone$ is identity since $F|_{\lambda =1}$ 
 is given by the generating spinors $\psi_1, \psi_2$ of the minimal surface $f$.
 \item 
 If one wants the two loop group procedures outlined above to be  
 inverse to each other, then one can achieve this by choosing 
 some fixed base point $z_0 \in \D$ and assume that all matrix functions 
 occurring above attain the value $\id$ at $z_0$.
\end{enumerate}
 \end{Remark}

 \section{The loop group construction of minimal surfaces in $\Nil$}\label{app:DPW}
 \subsection{Extended frames of minimal surfaces in $\Nil$ and extended frames 
 of harmonic maps into $\mathbb H^2 \cong \mathrm{SU}_{1,1}/\mathrm{U}_1$}
 For the purposes of this paper we need to use special frames 
 in order to construct minimal surfaces in $\Nil$.

 For this we would like to point out, that in the proof of Theorem 6.1 in \cite{DIKAsian} 
 it was shown that the map $N_{\Min}: \D \rightarrow \mathbb{Q}^2$, 
 equivalent to $g$, has a frame 
 of the form \eqref{special frame}. 
 Moreover, the $(1,2)$-entry of the $(1,0)$-part of the  Maurer-Cartan form of this frame
 never vanishes on $\D$, since we only considered minimal immersions into $\Nil$ there.
 We generalize this result by  proving the following ``folk theorem'':
 \begin{Theorem}\label{thm:folktheorem}
 Assume the matrix valued function 
 $\hat F:\mathbb D \to \LISU$ satisfies 
\[
 \hat{\alpha}^\lambda =  \hat F^{-1}d \hat F,
\] 
 where
\begin{equation}\label{eq:U-V 2lambda}
 \alpha^{\l} = \hat{U}^{\l} dz + \hat{V}^{\l} d\bar z
\end{equation}
 with 
 \begin{equation*}
 \hat{U}^{\l} =
 \begin{pmatrix}
 a  &
 - \l^{-1} b \\ 
 \l^{-1} c &  -a
 \end{pmatrix},  \quad \quad 
 \hat{V}^{\l} =
 \begin{pmatrix}
  q & - \l b\\
 \l r & -q
 \end{pmatrix},
 \end{equation*} 
 and where $b$ never vanishes  on $\D$.
 Then $a = q = iu$ with $u$ a real valued function, 
 as well as $p = \bar c$ and $r = \bar b.$
  Moreover, after a diagonal gauge in $\LISU$ we can assume that $b$ is purely imaginary
 and never vanishes on $\D$.
 In this case, after writing $b$ in the form $b =  - e^{w/2}$ the matrices $ \hat{U}^{\l}$ and 
 $ \hat{V}^{\l}$ attain the explicit form stated in equation \eqref{eq:U-V1lambda}.
 \end{Theorem}
\begin{proof}[Sketch of the proof] 
 The first claim follows from $\hat F \in \LISU.$
 Writing $b$ in the form $b = i v e^{is}$ with $v$ and $s$ real valued functions 
 we see that the diagonal gauge in $\LISU$  with $(1, 1)$-entry $e^{-is/2}$  
 verifies the second claim. Assuming the first two claims are satisfied, 
 then the last claim follows by an evaluation of the integrability condition   
 of $\hat{\alpha}^\lambda$.
 \end{proof}
 
 \begin{Corollary} \label{special frame for g}
  If  $\hat F:\mathbb D \to \LISU$ is a general extended frame of a harmonic map 
 $N_{\Min}:\D \rightarrow \mathbb{Q}^2$, such that the $(1, 2)$-entry 
 of the $(1,0)$-part of the Maurer Cartan form 
 of $\hat{F}$ never vanishes on $\D$,  then there exists a matrix function
 $k:\D \rightarrow \mathrm{U}_1$ 
 such that $\hat{F} = F k$ with $F$ a general extended frame of $g$ which 
 satisfies  \eqref{eq:U-V1lambda} and is of the form \eqref{special frame} 
 for all $\lambda \in \mathbb S^1$. Moreover, equation \eqref{special relation} 
 holds for all $\lambda \in \mathbb S^1$. 
  \end{Corollary}
\begin{proof}
 By the theorem above we can assume without loss of generality that 
 the Maurer-Cartan form of $\hat{F}$
 has the form stated in  \eqref{eq:U-V1lambda}. 
 Using \eqref{special relation} we can define for all $\lambda \in \mathbb S^1$
 the function $h$ which is supposed to become 
 $2 (|\psi_1|^2 - |\psi_2|^2)$. Putting 
 \[
  \hat{F}_{11} = \psi_1 \sqrt{2} (i h)^{-1/2} \quad \mbox{and}\quad
  \hat{F}_{12} = \psi_2 \sqrt{2} (i h)^{-1/2},
 \] 
 we have rewritten $\hat{F}$ for all $\lambda \in 
 \mathbb S^1$ in the special form \eqref{special frame}.

 \end{proof}  
 By the results above we have found very special frames for harmonic maps into 
 $\mathbb{Q}^2$. What we still want  to show is that the functions $\psi_j$ occurring 
 in these frames define a minimal surface in $\Nil$. 
 We will achieve this in the next subsection.
\subsection{Sym-formula}\label{sc:Sym}
 We regard $\mathfrak{su}_{1,1}$ as 
 the Minkowski 3-space $\Min$ as in Section \ref{subsc:Realizing}.
 We identify the Lie algebra 
 $\mathfrak{nil}_3$ of $\Nil$ with the 
 Lie algebra $\isu$ as a \textit{real vector space}.  
 Then the corresponding  linear isomorphism $\Xi:\mathfrak{su}_{1,1}\to 
 \mathfrak{nil}_3$ is given by
 \begin{equation*}
\mathfrak{su}_{1,1} \ni 
x_1 \mathcal{E}_1 + x_2 \mathcal{E}_2 + x_3 \mathcal{E}_3
\longmapsto
x_1 e_1 +  x_2 e_2 +  x_3 e_3 \in \mathfrak{nil}_{3}.
\end{equation*}
 It should be remarked that the linear isomorphism $\Xi$ is 
 \textit{not} a Lie algebra 
 isomorphism. For geometric meaning of this 
 linear isomorphism, see Appendix \ref{mysterious}.
 
 Next we consider the exponential map 
 $\exp:\mathfrak{nil}_3\to \mathrm{Nil}_3$.   
 We define a smooth bijection  
 $\Xi_{\rm nil}:\isu \to \Nil$ by $\Xi_{\rm nil}:=\exp \circ \Xi$.
Under this identification $\mathrm{Nil}_3=\mathfrak{su}_{1,1}$, and 
$\mathrm{SO}_2=\{\exp(t\mathcal{E}_3)\}_{t\in\mathbb{R}}$ 
 acts isometrically on $\mathrm{Nil}_3$ by rotations around the $x_3$-axis.
 
 In what follows we will take derivatives 
 for the variable $\l$.
 Note that for $\l=e^{i\theta} \in \mathbb S^1$, we have
 $\partial_{\theta}=i \l \partial_{\l}$.
The following result is essentially Theorem 6.1 of \cite{DIKAsian}, but has weaker assumptions.
It turns out  that the proof stays correct for the slightly more general assumptions stated just below.
\begin{Theorem} \label{thm:Sym}
 Let  $F: \mathbb D \to \LISU$ be a general extended frame of a harmonic map 
 $g: \D \rightarrow \mathbb{H}^2$, such that the $(1, 2)$-entry of the 
 $(1,0)$-part of the Maurer Cartan form 
 of $F$ never vanishes on $\D$,  and such that $F$ satisfies the conclusions of
 {\rm Corollary \ref{special frame for g}}.

 Define the maps $f_{\Min}$ and $N_{\Min}$ 
 respectively by
 \begin{equation}\label{eq:SymMin}
 f_{\Min}=-i \l (\partial_{\l} F) F^{-1} 
 - \frac{i}{2} \ad (F) \sigma_3.\;\;
 \mbox{and} \;\;
 N_{\Min}= \frac{i}{2} \ad (F) \sigma_3,
 \end{equation}
 where $\sigma_3 = \left( \begin{smallmatrix} 1 &0 \\ 0 & -1 \end{smallmatrix}\right)$.
 Moreover, define a map $f^{\l}:\mathbb{D}\to \mathrm{Nil}_3$ by%
\begin{equation}\label{eq:symNil}
 f^{\l}:=\Xi_{\mathrm{nil}}\circ \hat{f}\quad\mbox{with}
\quad
 \hat f = 
    (f_{\Min})^o -\frac{i}{2} \l (\partial_{\l}  f_{\Min})^d, 
\end{equation}
 where the superscripts ``$o$'' and ``$d$'' denote the off-diagonal and 
 diagonal part, 
 respectively. 
 Then, for each $\l \in \mathbb{S}^1$, the following 
 statements hold:
\begin{enumerate}
 \item The map $f_{\Min}$ is a spacelike 
 constant mean curvature surface with mean curvature $H=1/2$ in $\Min$
 and $N_{\Min}$ is the timelike unit normal vector of $f_{\Min}$.
 \item The map $f^{\l}$ is a minimal surface in $\Nil$ and 
 $N_{\Min}$ is the isometric image of the normal Gauss map of $f^{\l}$ in 
 the hyperboloid $\mathbb{Q}^2$ under the natural isometry from the 
 unit disk $\mathbb{H}^2$
 onto $\mathbb{Q}^2$ $($see  {\rm Section \ref{sbsc:Relization} } for details$)$.
 In particular, any general extended frame of $g$  is an extended frame of
 some minimal surface $f$. Furthermore, $f^{\l}|_{\l =1}$ and $f$ are the same 
 up to a translation.
\end{enumerate}
 Conversely, for each minimal surface $f : \D \rightarrow \Nil$ 
 there exists an extended frame such that the Sym-formula applied to 
 this frame produces for $\lambda = 1$ the given immersion $f$.
\end{Theorem}

\begin{proof}
 We only need to prove the ``converse'' statement. 
 To do this, we choose an extended frame $F$
of the form (\ref{special frame}) for the given surface $f$. By the results above we know
that $F$ does induce (for $\lambda = 1$) a minimal surface $\tilde{f}$ in $\Nil$ via the Sym formula. 
Moreover, $f$ and $\tilde{f}$ only differ by a translation in $\Nil$. It thus suffices to prove that there exists a matrix $A(\lambda) \in \LISU$ satisfying $A(\lambda = 1) = \id$ such that the frame 
$\check{F} = A(\lambda) F$ induces, via the Sym formula, 
 exactly the original surface $f$ for $\lambda = 1$.
 In fact if we choose 
 \[
  A (\lambda) = \exp B(\lambda) \quad \mbox{with}\quad 
 B(\lambda) = \frac{1}{4}B_1 (\lambda - \lambda^{-1}) + \frac{1}{8}B_2 (\lambda - \lambda^{-1})^2
 \]
 such that $B(\lambda) \in \Lisu$, then $A(\lambda) \in \LISU$ and $A(\lambda = 1)=\id$.
 Moreover, the respective minimal surfaces given by the frames $F$ and 
 $\check F$  differ by a translation $T=(p, q, r)$ with
\[
 p = -\Re (b_{112}), \quad  q = -\Im (b_{112}) \quad \mbox{and} \quad r= -i b_{2 11},
\] 
 respectively, where $b_{112}$ is the $(12)$-entry of $B_1$ and 
$b_{211}$ is the $(11)$-entry of $B_2$.
\end{proof}

In view of  Corollary \ref{special frame for g},  we obtain
\begin{Corollary}
 Let  $F: \mathbb D \to \LISU$ be a general extended frame of a harmonic map 
 $g: \D \rightarrow \mathbb{H}^2$, such that the $(1, 2)$-entry of 
 the $(1,0)$-part of the Maurer Cartan form 
 of $F$ never vanishes on $\D$. 
 Define the maps $f_{\Min}$ and $N_{\Min}$ as in the last theorem.
 Then the conclusions of the theorem above also hold.
\end{Corollary}

 Moreover, from Corollary \ref{special frame} for $g$
 and Theorem \ref{thm:Sym} we have
\begin{Corollary}\label{cro:extendedframe}
 Let $F$ be an extended frame of a minimal surface $f$ 
 as defined in \eqref{special frame}, and let  $\alpha^{\l}$
 denote the Maurer-Cartan form of $F$. Moreover let $\hat F$ be a any 
 solution of $\hat F^{-1} d \hat F = \alpha^{\l}$ which takes values in 
 $\LISU$, that is, $F$ and $\hat F$ are related in the form  
 $F = A \hat F$ with some $z$-independent matrix $A \in \LISU$.
 Then plugging $\hat F$ into the Sym formula \eqref{eq:symNil}, we obtain 
 a minimal surface $\hat{f}$ in $\Nil$ and $\hat{F}$ is an extended frame for $\hat{f}.$
 \end{Corollary}
 \begin{Remark}
 In general,  this surface $\hat f$ is not isometric to the original 
 minimal surface $f$, see Example \ref{ex:translation}.
\end{Remark}

\subsection{Generalized Weierstrass type representation}\label{subsc:GWR}
 We now briefly summarize the results of the 
 generalized Weierstrass type representation in \cite[Section 7]{DIKAsian} as follows:
 Let $F$ be an extended frame of some minimal surface $f$  
 as in \eqref{special frame} defined on a simply connected domain $\mathbb D$.
 The Birkhoff decomposition, see \cite[Theorem 7.1]{DIKAsian} or 
 \cite{PreS:LoopGroup}, 
 of $F$ is given as
\[
 F = F_{-} F_{+}, \quad F_{-} \in \LSLMI, \quad F_{+} \in \LSLP.
\] 
 Then by
 \cite[Theorem 7.2]{DIKAsian} 
 $F_{-}$ is meromorphic with respect to $z$ and moreover, the Maurer-Cartan 
 form $F_{-}^{-1} d F_{-}$ satisfies 
 \begin{equation}\label{eq:xim}
 \xi_{-} = F_{-}^{-1} d F_{-} = \l^{-1} 
 \begin{pmatrix}
 0 & -p \\ B p^{-1} & 0
\end{pmatrix}
 d z,
 \end{equation}
 where $p$ is a meromorphic function on $\mathbb D$ and $B dz^2$ 
 is the Abresch-Rosenberg differential which is a holomorphic 
 quadratic differential.
 The meromorphic $1$-form $\xi_{-}$ as in \eqref{eq:xim} will be 
 called the \textit{normalized potential}. 
 
 Conversely,
 
 \textbf{Step I.} Let $\xi_{-}$ be a meromorphic 1-form of the form stated  in \eqref{eq:xim} 
 which has a global  meromorphic solution to $ d C = C \xi_{-}$  and 
 solve the  linear ODE:
 \begin{equation*}
   d C = C \xi_{-} \quad \mbox{with $C(z_0, \l) \in \LSL$}.
 \end{equation*}
 
 \textbf{Step I\!I.} Apply the unique  Iwasawa decomposition 
 as stated in \cite[Remark 8.1]{DIKAsian} for $C$ near $z_0$, 
 that is,
 \begin{equation*}
  C = F V_+  \in \LISU \cdot \LSLP \quad \mbox{or} \quad  C = F \omega_0 V_+ 
 \in \LISU \cdot \omega_0\cdot \LSLP , 
 \end{equation*}
 where 
 $\omega_0 = \left( \begin{smallmatrix} 
 0 & \l \\
  - \l^{-1} & 0	   
 \end{smallmatrix}\right)$.
 Then from Theorem 8.2 in \cite{DIKAsian}, 
 it follows that there exists some diagonal matrix  $ D \in \LISU$ such that $FD$ or 
 $\omega_0FD$ is an extended frame of some minimal surface in $\Nil$ in the sense of 
 Corollary \ref{cro:extendedframe}.

\textbf{Step I\!I\!I.} In the final step, 
 minimal surfaces in $\Nil$ can be obtained by the Sym formula in Theorem \ref{thm:Sym}.
\begin{Remark}
 We note that the normal Gauss map $N_{\Min}$ of the resulting minimal surface can 
 be obtained by the extended frame $FD$ or $\omega_0 F D$ by 
\[
 \frac{i}{2}\ad (F) \sigma_3 \quad \mbox{or} \quad  \frac{i}{2}\ad (\omega_0 F)  \sigma_3,
\] 
 which is in fact the unit normal to the spacelike constant mean curvature 
 $H =1/2$ surface $f_{\Min}$ in $\Min$ defined in \eqref{eq:SymMin}.
\end{Remark}

We will explain how to produce all minimal surfaces by our method.
 The main point is Birkhoff splittability of an extended frame of 
 a minimal surface which satisfies \eqref{special frame}. 
 Starting from some minimal surface we obtain a special frame $\tilde F$
 as in \eqref{special frame}. Note that $\tilde F$ is independent of $\l$.
 Choose some fixed base point $z_0 \in \D$ and consider $B_0 = \tilde F(z_0)$.
 Now consider $B(\l) = \left(
 \begin{smallmatrix}
 b_{11} &  \lambda^{-1} b_{12} \\ \l \overline{b_{12}} & \overline{b_{11}}
 \end{smallmatrix}\right) \in \LISU$, 
 where the $b_{ij} \;(1\leq i, j\leq 2)$ are the entries of $B_0$.
 Note that $B(\lambda)$ is Birkhoff splittable $B = B_- B_+$ with
 $B_- = \left( \begin{smallmatrix}1 & \lambda^{-1} \\ 0 &1 \end{smallmatrix} \right)$, 
 and $B_+$ lower triangular, as can be verified by a simple computation. 
 Note that $B_{+11}$ never vanishes.

 Next solve the Maurer-Cartan form equation for $F$ with initial condition 
 $B(\lambda)$ for each 
 $\lambda \in \mathbb S^1$. This will  produce an extended frame which coincides 
 with the original $\tilde F$ for $\lambda =1$ and which will be Birkhoff splittable 
 near the base point $z_0$.

\section{Real form involution and global meromorphicity} 
\label{sc:Meroextension}
 Let $\eta (z,  \lambda)$ be a potential 
 for a minimal surface in $\Nil$.
 Consider the solution to $dC = C \eta$, satisfying 
 $C(0,\lambda) =  \id$.
 Let $\varphi$ denote the involution 
 which characterizes the 
 real form 
 $\LISU$ in $\LSL$.
 Then we have $\varphi (g) 
 = \sigma_3 \overline{{}^t g(1/ \bar \lambda)}^{-1} \sigma_3$ 
 for $g \in \LSL$.
 By abuse of notation, put 
\begin{equation}\label{eq:varphi}
 \varphi\left(\eta (w,\lambda)\right) = - \sigma_3 
 \overline{{}^t\eta( \bar w, 1/ \bar \lambda)} \sigma_3.
\end{equation}
 We now introduce $\iota: A(z, w, \l) \mapsto A(w, z, \l)$ 
 for $A : \D \times \overline{\D} \to \LSL$
 and define (group level)
\[
\hat \varphi\left(A(z,w,\lambda)\right) =  
 \iota(\varphi A(z,w,\l)) = \sigma_3 \overline{{}^t A(\bar w, \bar z, 
 1/ \bar \l)}^{-1} \sigma_3.
\]
In this sense we abbreviate
\[
R(w, \lambda) = \varphi(C(w,\lambda)) = 
 \sigma_3 \overline{{}^t C(\bar w, 1/ \bar \lambda)}^{-1} \sigma_3. 
\]
Now, analogous to the usual loop group approach to the construction of integrable surfaces we consider next $Q(z,w, \lambda) =  R(w, \lambda)^{-1} C(z, \lambda)$ and consider its (meromorphic) Birkhoff decomposition
\begin{equation}\label{eq:Birkhoffdouble}
 Q(z,w, \lambda) = R(w, \lambda)^{-1} C(z, \lambda)  
 = V_-^{-1}(z,w,\lambda)S(z,w) V_+(z,w, \lambda),
\end{equation}
 where $V_+$ and $V_-$ have leading term $\id$, 
 and $S$ is a $\l$-independent diagonal matrix.
 As pointed out in \cite{DPW}, 
 the entries of $V_+, V_-$ and $S$ are quotients of the entries of $C^{-1}R$. 
 As a consequence they are meromorphic functions on $\D \times \overline{\D}$. From \eqref{eq:Birkhoffdouble}, it is easy to see that 
\begin{equation}\label{eq:U}
 U = C V_+^{-1} = R V_-^{-1} S.
\end{equation}

\subsection{Iwasawa decomposition and the decomposition of $B$}
\label{subsc:AppIwasawa}
Eventually, we want to determine $S$ in more detail.
To start with we observe 
\[
\hat \varphi(R^{-1} C) = (R^{-1} C)^{-1}. 
\]
From this we infer the equations
\begin{equation} \label{relations}
\hat \varphi(V_+) = V_- \quad  \mbox{and} \quad \hat \varphi(S) = 
 S^{-1}.
\end{equation}
For $U = C V_+^{-1} = R V_-^{-1} S$,
we thus obtain $\hat \varphi(U) = U S^{-1}$.
We want to prove: $S= \pm (\hat \varphi l)^{-1} l$ 
 for some $\l$-independent diagonal matrix $l$.
To begin with we consider $S(0,0)$. We observe that \eqref{relations} 
implies that $S(0,0)$ is real (and non-zero anyway). 

Case 1: $S(0,0) >0$:
 Writing $S(z,w) = \di (e^{b(z,w)}, e^{-b(z, w)})$, 
 we see that to prove our claim we need to find some function 
 $a(z,w)$ such that
\[
 b(z,w) = a(z,w) + \overline{a(\bar w,\bar z)}
\]
 with $a(0,0)$ real. But $\hat \varphi S = S^{-1}$ implies 
\[
\overline{ b( \bar{w},\bar{z})}= b(z,w).
 \]
Using this and a  power series expansion of $b$ and setting
\begin{equation*}
a(z,w) = \sum_{0 < n < m} b_{nm} z^n w ^m + \frac{1}{2} \sum_{n = 0} b_{nn} z^n w ^n.
\end{equation*}
we obtain $b(z, w) = a(z, w) + \overline{a(\bar w,\bar z)}$. 
Hence (so far at least locally) we obtain, as desired, 
$S =  (\hat \varphi l)^{-1} l$.
Moreover,  $\hat{U} = U l^{-1}$ satisfies 
 $\hat \varphi (\hat{U}) = \hat{U}$.
In addition
\begin{equation*}
C = \hat{U} \hat{V}_+,
\end{equation*}
 with $\hat V_+ = l V_+$.

Case 2: $S(0,0) <0$:
 Write $S = - S_0,$ then $\hat \varphi S_0 = S_0^{-1}$  
 and $S_0(0,0) >0$ holds. The argument given just above 
 produces some $k$  satisfying $S_0 = (\hat \varphi k)^{-1} k$.
 Then $\check{U} = U k^{-1}$ satisfies $\hat \varphi(\check{U}) 
 = - \check{U}$, 
 what we are not interested in.
 Therefore we reconsider
\[
 R^{-1} C = \hat{V}_-^{-1} ( - \id) \hat{V}_+ 
 =  \hat{V}_-^{-1} ( (\hat \varphi \omega_0)^{-1} \omega_0) \hat{V}_+, 
\]
with $\omega_0 = \left(
\begin{smallmatrix}
0 & \lambda \\
- \lambda^{-1} & 0
\end{smallmatrix}\right)$.
 We obtain 
 \begin{equation}\label{eq:hatUonIw}
\hat{U} = C \hat{V}_+^{-1} \omega_0^{-1} 
 = R \hat{V}_-^{-1} (\varphi \omega_0)^{-1}. 
\end{equation}
 Consequently we arrive at
 \begin{equation*}
 C = \hat{U} \omega_0 \hat{V}_+ 
 \quad \mbox{and} \quad  \hat \varphi(\hat{U}) = \hat{U}.
 \end{equation*}
 When $w = \bar z$, then $\hat \varphi$ is the anti-linear 
 involution defining $\LISU$ and thus $\hat U$ takes values $\LISU$.
 Moreover the leading term of $\hat V_+ = l V_+$ has real entries.
 Let $C = F V_+$ be the (unique) 
 Iwasawa decomposition on $z \in \mathcal I_e$ as in \eqref{IwasawaIe}. Then 
 we have $\hat U = F$ and thus 
 \[
  F l = U 
 \]
 holds, and $F l$ has a unique meromorphic extension. Moreover, on $z
 \in \mathcal I_{w}$, we have 
 \[
  \hat U = F l k^{-1} w_0^{-1}
 \]
 for $\hat U$ defined in \eqref{eq:hatUonIw}.
%
\section{Geometric meaning of the linear isomorphism 
$\mathfrak{su}_{1,1}$ and $\mathfrak{nil}_3$}\label{mysterious}
\subsection{Unimodular Lie algebras}
Let us consider a $3$-dimensional 
real \textit{unimodular} Lie 
algebra $\mathfrak{g}$ 
with basis $\{e_1,e_2,e_3\}$. This Lie algebra is 
defined by the commutation relations:
\[
[e_1,e_2]=c_3\>e_3,\ \ 
[e_2,e_3]=c_1\>e_1,\ \ 
[e_3,e_1]=c_2\>e_2.
\]
We introduce an inner product 
on $\mathfrak{g}$ so that $\{e_1,e_2,e_3\}$ is 
orthonormal with respect to it.

Here we introduce auxiliary parameters $\mu_1$, $\mu_2$, $\mu_3$ by 
\[
\mu_{i}=\frac{1}{2}(c_1+c_2+c_3)-c_i, \ \ 
i=1,2,3.
\]
Now we restrict our attention to the range:
\[
c_1=c_2=:c\leq 0, \ \ \ c_3=:2\tau\geq 0.
\]
We denote the metric Lie algebra by $\mathfrak{g}(c,\tau)$.
The corresponding simply connected 
Lie group with left invariant metric is denoted by $G(c,\tau)$.

Then we have the following table of sectional curvatures:
\[
K(e_1\wedge e_2)=-3\tau^2+2c\tau, 
\ \  
K(e_2\wedge e_3)=K(e_1 \wedge e_3)=\tau^2.
\]

The quantity $\kappa:=K(e_1\wedge e_2)+3\tau^2=2c\tau$ is called the 
\textit{base curvature} of  $G(c,\tau)$.

\begin{Example}[$\mathrm{Nil}_3$]
Let us choose $c=0$ then $\mathfrak{g}(0,\tau)$ is 
isomorphic to $\mathfrak{nil}_3(\tau)$.
We have $\mu_1=\mu_2=-\mu_3=\tau$, so we get
$K(e_1\wedge e_2)=-3\tau^2$, $K(e_2\wedge e_3)=K(e_1 \wedge e_3)=\tau^2$. 
Hence $\kappa=0$.
\end{Example}
\begin{Example}[$\mathrm{SU}_{1,1}$]
Next let us consider the case $c<0$. 
In this case, the Lie algebra is 
isomorphic to $\mathfrak{su}_{1,1}$ and the isometry 
group 
of the corresponding simply connected Lie group $G(c,\tau)$ is 4-dimensional and 
$K(e_1\wedge e_2)=-3\tau^2+2c\tau$, 
$K(e_2\wedge e_3)=K(e_1 \wedge e_3)=\tau^2$. Hence $\kappa=2c\tau<0$.
\end{Example}
One can see that $\mathfrak{nil}_3(\tau)=\lim_{c\to 0}\mathfrak{g}(c,\tau)$.
We can show that there is a real analytic collapsing 
$G(c,\tau)\to \mathrm{Nil}_3(\tau)$. Note that for $c<0$, 
$G(c,\tau)$ is the universal covering of $\mathrm{SU}_{1,1}$.

\subsection{Anti de Sitter space}
Now we consider the metric induced from the Killing form 
of $\mathfrak{su}_{1,1}$.

First we take the basis $\{e_1,e_2,e_3\}$ of $\mathfrak{g}(c,\tau)$ as
before. Next we choose $c$ so that $c=-2\tau>0$. Moreover we 
define a scalar product $\langle\cdot,\cdot\rangle_{L}$ by the rule
$\{e_1,e_2,e_3\}$ is orthogonal and 
\[
\langle e_1,e_1\rangle_L=
\langle e_2,e_2\rangle_L=
-\langle e_3,e_3\rangle_L=1.
\] 
Denote by $\omega$ the left invariant 
$1$-form on $G(-2\tau,\tau)$ dual to $e_3$.
Then the two scalar products are related by 
$\langle\cdot,\cdot\rangle_L=
\langle\cdot,\cdot\rangle-2\omega^2$. 

This scalar product is given explicitly by
\[
\langle X,Y\rangle_L=\frac{1}{2\tau^2}\mathrm{tr}\>(XY).
\]
This shows that the induced Lorentzian metric is bi-invariant 
and proportional to the Killing metric.
Since the metric is bi-invariant, we 
have
\[
\langle R(X,Y)Y,X\rangle=\frac{1}{4}
\langle [X,Y], [X,Y]\rangle_L.
\]
This implies that $G(-2\tau,\tau)$ is of constant curvature 
$-\tau^2$.

From these observations we can interpret the isomorphism 
$\mathfrak{nil}_3(1/2)\to \mathfrak{su}_{1,1}$ in the following way.

\begin{enumerate}
\item For $\tau>0$ and $c<0$, we consider 
the unimodular Lie algebra $\mathfrak{g}(c,\tau)$ with basis 
$\{e_1,e_2,e_3\}$ and equip  a scalar product $\langle\cdot,\cdot\rangle
=\langle\cdot,\cdot\rangle_{c,\tau}$.
\item Take $c=-2\tau$ and change the inner product to the scalar product
$\langle\cdot,\cdot\rangle_L$. Then we have the Minkowski 3-space 
$\Min=\mathbb{R}e_1\oplus\mathbb{R}e_2\oplus\mathbb{R}e_3$;
\[
\Min:=(\mathfrak{g}(-2\tau,\tau),\langle\cdot,\cdot\rangle_L).
\]
The Lie algebra is $\mathfrak{su}_{1,1}$.
\item On the other hand, fixing the inner product 
$\langle\cdot,\cdot\rangle$ on $\mathfrak{g}(c,\tau)$. 

Then the resulting $\lim_{c\to 0}\mathfrak{g}(c,\tau)$ 
is Euclidean $3$-space 
$\mathbb{R}^3=\mathbb{R}e_1\oplus\mathbb{R}e_2\oplus\mathbb{R}e_3$
with nilpotent Lie algebra structure. Thus 
$\lim_{c\to 0}\mathfrak{g}(c,\tau)$ is $\mathfrak{nil}_3(\tau)$.
\end{enumerate}
Thus there is a linear isomorphism (identity map) 
\[
\mathfrak{su}_{1,1}=\mathfrak{g}(-2\tau,\tau)
\longleftrightarrow \mathfrak{g}(0,\tau)=\mathfrak{nil}_3(\tau)
\]
given by $e_i\longleftrightarrow e_i$.

Thus the isomorphism first observed by Cartier \cite{Cartier} is just 
the \emph{identity map}. 
Note that the simply connected Lie group $G(-2\tau,\tau)$ equipped with 
left invariant Riemannian metric is the model space 
$\widetilde{\mathrm{PSL}}_2$ of Thurston geometry.

\subsection{Explicit models}
Take the following split-quaternion basis:
\begin{equation*}
\Vec{i}=\begin{pmatrix} i & 0\\ 0 &-i \end{pmatrix},
\ \ 
\Vec{j}^{\prime}=
\begin{pmatrix} 0 & -i \\ i &0 \end{pmatrix},
\ \ 
\Vec{k}^{\prime}=
\begin{pmatrix} 0 & 1 \\ 1 & 0 \end{pmatrix}.
\end{equation*}
of $\mathfrak{su}_{1,1}$.
We define the basis $\{\mathcal{E}^\tau_1,\mathcal{E}_2^\tau,
\mathcal{E}^\tau_3\}$ by
\[
\mathcal{E}^\tau_1=-\tau\Vec{j}^{\prime},
\ \ 
\mathcal{E}_2^\tau=-\tau\Vec{k}^{\prime},
\  \
\mathcal{E}_3^\tau=-\tau\Vec{i}.
\]
This basis satisfies 
\[
[\mathcal{E}^\tau_1, \mathcal{E}^\tau_2]=2\tau\mathcal{E}^\tau_3,
\ \ 
[\mathcal{E}^\tau_2, \mathcal{E}^\tau_3]=-2\tau\mathcal{E}^\tau_1,
\ \
[\mathcal{E}^\tau_3, \mathcal{E}^\tau_1]=-2\tau\mathcal{E}^\tau_2.
\]
We use the scalar product
\[
\langle X,Y\rangle_\tau:=\frac{1}{2\tau^2}\mathrm{tr}\>(XY),
\]
then  $\{\mathcal{E}^\tau_1,\mathcal{E}_2^\tau,
\mathcal{E}^\tau_3\}$ is orthonormal. The sectional curvature is $-\tau^2$.
If we put $e_i=\mathcal{E}^\tau_i$, then 
$c_1=c_2=-2\tau<0$ and $c_3=2\tau>0$.

Thus we have the following fact.
\begin{Theorem}
For a positive number $\tau$, we take a basis
$\{e_1,e_2,e_3\}$ of $\mathfrak{su}_{1,1}$ defined by 
$e_i=-\tau\>\mathcal{E}^\tau_i$.
Introduce two scalar products on $\mathfrak{su}_{1,1}$ by
\begin{itemize}
\item The inner product defined by the rule, $\{e_1,e_2,e_3\}$ is 
orthonormal with respect to it.
\item The Lorentzian scalar product 
\[
\langle X,Y\rangle_L=\frac{1}{2\tau^2}\mathrm{tr}\>(XY).
\]
\end{itemize}
Then we have
\begin{itemize}
\item With respect to the Riemannian metric, $\mathrm{SU}_{1,1}$
has sectional curvatures
\[
K(e_1\wedge e_2)=-7\tau^2,\  \
K(e_2\wedge e_3)=K(e_3\wedge e_1)=\tau^2.
\]
The base curvature is $-4\tau^2$.
\item With respect to the Lorentzian metric, $\mathrm{SU}_{1,1}$ is of
constant curvature $-\tau^2$.
\end{itemize}
In both cases the quotient space 
$\mathbb{H}^2=\mathrm{SU}_{1,1}/\mathrm{U}_1$
is of constant curvature $-4\tau^2$.
\end{Theorem}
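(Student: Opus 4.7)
The plan is to verify the various claims by direct computation, using the standard theory of left-invariant (semi-)Riemannian metrics on Lie groups. First, I would compute the commutation relations of $\{e_1, e_2, e_3\}$ with $e_i = -\tau\mathcal{E}^\tau_i$ by bilinearity from the given relations for $\{\mathcal{E}^\tau_i\}$, obtaining structure constants $[e_j, e_k] = c_i\, e_i$ (cyclic) that place the Lie algebra in the unimodular framework of the Unimodular Lie algebras subsection. This setup is precisely what permits invoking the pre-tabulated sectional-curvature formulas for the Riemannian part and reading off all the quantities $c$, $\tau$, $\mu_i$ introduced there.

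For the Riemannian sectional curvatures, I would apply the Koszul formula to the left-invariant metric making $\{e_1, e_2, e_3\}$ orthonormal. The connection coefficients take the familiar Milnor form $\nabla_{e_i} e_j = \mu_k e_k$ with $\mu_i = \tfrac{1}{2}(c_j + c_k - c_i)$, and a direct computation of $R(e_i, e_j)e_j$ yields the three claimed sectional curvatures $-7\tau^2$ and $\tau^2$, $\tau^2$; the base curvature then follows immediately from its definition $\kappa = K(e_1\wedge e_2) + 3\tau^2$. For the Lorentzian sectional curvatures, I would exploit the bi-invariance of $\langle X, Y\rangle_L = \tfrac{1}{2\tau^2}\mathrm{tr}(XY)$ (being proportional to the Killing form of $\mathfrak{su}_{1,1}$), using the identity $\langle R(X, Y)Y, X\rangle_L = \tfrac{1}{4}\langle [X, Y], [X, Y]\rangle_L$ already recorded in the Anti de Sitter space subsection. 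Together with the computation of $\|e_i\|_L^2$ and of the brackets $[e_i, e_j]$, this reduces to an elementary check on one representative pair of basis vectors, and constancy of curvature across all tangent planes then follows from the transitivity of the isometric $\mathrm{SU}_{1,1}\times \mathrm{SU}_{1,1}$ action on orthonormal frames.

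For the final claim about $\mathbb{H}^2 = \mathrm{SU}_{1,1}/\mathrm{U}_1$, I would observe that the right $\mathrm{U}_1$-action on $\mathrm{SU}_{1,1}$ preserves both metrics: it is generated by $e_3$, and $\mathrm{ad}_{e_3}$ rotates the $e_1$--$e_2$ plane orthogonally while fixing $e_3$, so it preserves the Riemannian inner product by construction and the Lorentzian one because $\mathrm{Ad}$ preserves the Killing form. Both metrics therefore descend to $\mathbb H^2$. In the Riemannian case one has a Riemannian submersion with vertical line spanned by $e_3$ and horizontal plane $e_1\wedge e_2$, so O'Neill's formula $K_{\text{base}} = K_{\text{total}}(e_1, e_2) + \tfrac{3}{4}\|[e_1, e_2]^V\|^2$ combines $K(e_1\wedge e_2) = -7\tau^2$ with the vertical contribution of the bracket to produce exactly $-4\tau^2$; in the Lorentzian case the quotient is the standard symmetric-space realization of $\mathbb{H}^2$ inside the constant-curvature Lorentz manifold $(\mathrm{SU}_{1,1}, \langle\cdot,\cdot\rangle_L)$, and the Gauss equation (or the direct symmetric-pair computation) yields the same curvature $-4\tau^2$. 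The main obstacle is the precise bookkeeping of signs and factors of $\tau$ throughout, particularly in O'Neill's formula, so that the vertical bracket contribution exactly cancels the discrepancy between $-7\tau^2$ and $-4\tau^2$; a subsidiary subtle point is checking carefully that the two rather different metrics on $\mathrm{SU}_{1,1}$ really do descend to isometric constant-curvature metrics on $\mathbb H^2$, rather than merely to metrics whose curvatures coincide by accident.
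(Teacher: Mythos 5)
Your proposal is correct and, for the two bullet points, follows the same route the paper takes: the paper offers no separate proof of this theorem, but simply specializes the pre-tabulated unimodular curvature formulas ($K(e_1\wedge e_2)=-3\tau^2+2c\tau$, $K(e_2\wedge e_3)=K(e_1\wedge e_3)=\tau^2$, $\kappa=2c\tau$) to $c_1=c_2=-2\tau$, $c_3=2\tau$, and for the Lorentzian metric invokes bi-invariance together with $\langle R(X,Y)Y,X\rangle=\tfrac14\langle[X,Y],[X,Y]\rangle_L$, exactly as you propose. The one place where you genuinely add content is the final claim about $\mathbb{H}^2=\mathrm{SU}_{1,1}/\mathrm{U}_1$: the paper asserts it without argument, whereas your O'Neill computation $-7\tau^2+\tfrac34\|2\tau e_3\|^2=-4\tau^2$ and the symmetric-pair computation $-\langle[[e_1,e_2],e_2],e_1\rangle_L=-4\tau^2$ supply a correct verification. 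Your closing worry that the two metrics might descend to merely curvature-equal rather than identical metrics is already settled by the paper's own relation $\langle\cdot,\cdot\rangle_L=\langle\cdot,\cdot\rangle-2\omega^2$: the two scalar products restrict to the same inner product on the horizontal plane $\mathrm{span}(e_1,e_2)$, so the induced metrics on the quotient coincide, not just their curvatures.

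One caution on the very first step. If you literally compute brackets with the normalization $e_i=-\tau\,\mathcal{E}^\tau_i$ as printed in the theorem, you get $[e_1,e_2]=-2\tau^2 e_3$ and $[e_2,e_3]=+2\tau^2 e_1$, i.e.\ $c_3=-2\tau^2$ and $c_1=c_2=+2\tau^2$, which does not reproduce the claimed curvatures. The normalization actually used in the surrounding text is $e_i=\mathcal{E}^\tau_i$ (the factor $-\tau$ is already built into the definition $\mathcal{E}^\tau_1=-\tau\,\Vec{j}^{\prime}$, etc.), which yields $c_1=c_2=-2\tau<0$, $c_3=2\tau>0$ and hence $K(e_1\wedge e_2)=-3\tau^2+2(-2\tau)\tau=-7\tau^2$ and $\kappa=-4\tau^2$. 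Make sure your bookkeeping follows the latter convention, or the structure constants you feed into the Milnor/Koszul machinery will be off by sign and scale.
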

If we choose $\tau=1/2$, we recover the situations 
in this article.
 
If we define the \textit{sign} $\epsilon$ by 
\[
\epsilon=
\begin{cases}
+1 &  \mbox{Riemannian metric}\\
-1 & \mbox{Lorentzian metric}.
\end{cases}
\]
The we have the unified formula for 
the sectional curvatures:
\[
K(e_1\wedge e_2)=-3\epsilon\tau^2-4\tau^2,\  \
K(e_2\wedge e_3)=K(e_3\wedge e_1)=\epsilon\tau^2.
\]

\subsection{Sister surfaces \cite{Daniel:iso}}

Let us take a minimal surface $f:\mathbb{D}\to\mathrm{Nil}_3(\tau)$.
Then its sister surface $\tilde{f}:\mathbb{D}\to G(c,\tilde{\tau})$ is 
defined by the relation
\[
-4\tau^2=\tilde{\kappa}-4\tilde{\tau}^2,\ \  
\tau^2=\tilde{\tau}^2+\tilde{H}^2,
\ \ 
\tilde{H}=-\tilde{\kappa}/4,  \ \ 
\tilde{\kappa}=2c\tilde{\tau}.
\]
If we choose $c=-2\tilde{\tau}$, we get 
$-4\tilde{H}^2=\tilde{\kappa}=-4\tilde{\tau}^2$. Thus we may choose $\tilde{H}=\tilde{\tau}>0$.
Thus $\tilde{\tau}=\tau/\sqrt{2}$.
Hence $\tilde{f}$ is a constant mean curvature surface
in $G(-\sqrt{2}\tau,\tau/\sqrt{2})$ with mean curvature 
$\tau/\sqrt{2}$. 
}

 {\bf Acknowledgements}\ \ This work was started during a visit of the third named author
  at the Technical University of Munich and a visit of the first named author 
 at Hirosaki University.  They would like to express their sincere gratitude for 
 the hospitality extended to them by the corresponding departments. They also thank 
 to the referee for a thorough reading and thoughtful remarks.  

\bibliographystyle{plain}
\def\cprime{$'$}

\end{document}